\newcommand{\IC}{\mathbb{C}}
\newcommand{\IR}{\mathbb{R}}
\newcommand{\IL}{\mathsf{L}}
\newcommand{\ILL}{\mathscr{L}}
\newcommand{\IPP}{\mathscr{P}}
\newcommand{\IAA}{\mathscr{A}}
\newcommand{\IN}{\mathbb{N}}
\newcommand{\IZ}{\mathbb{Z}}
\newcommand{\pa}{\slash\slash}
\newcommand{\sm}{\sim_b}
\newcommand{\Id}{{\rm d}}
\newcommand{\f}{\frac}
\newcommand{\nn}{\nonumber}
\theoremstyle{plain}            
\newtheorem{theorem}{theorem}[section]
\newtheorem{Lemma}[theorem]{Lemma}
\newtheorem{Corollary}[theorem]{Corollary}
\newtheorem{Theorem}[theorem]{Theorem}
\newtheorem{Proposition}[theorem]{Proposition}
\newtheorem{Propandef}[theorem]{Proposition and Definition}
\newtheorem{Theoreme}{Theorem}
\theoremstyle{definition}       
\newtheorem{Definition}[theorem]{Definition}
\newtheorem{Remark}[theorem]{Remark}
\newtheorem{Example}[theorem]{Example}
\begin{document}

\begin{titlepage}
\title[Heat kernels and relative compactness]
{Heat kernel estimates and the relative compactness of perturbations by 
potentials}

\clearpage
\setcounter{page}{1}

\author[J. Brüning]{Jochen Brüning}
\address{Jochen Brüning, Institut f\"ur Mathematik, Humboldt-Universit\"at 
zu Berlin, 12489 Berlin, Germany} \email{bruening@math.hu-berlin.de}

\author[B. G\"uneysu]{Batu G\"uneysu}
\address{Batu G\"uneysu, Institut f\"ur Mathematik, Humboldt-Universit\"at 
zu Berlin, 12489 Berlin, Germany} \email{gueneysu@math.hu-berlin.de}

\end{titlepage}

\maketitle

\begin{center}
\today

\end{center}
\begin{abstract} We consider a self-adjoint non-negative operator $H$ in a Hilbert space 
$\mathsf{L}^2(X,{\rm d}\mu)$. We assume that the semigroup $(\mathrm{e}^{-t H})_{t>0}$ is defined by an integral kernel, $p$, which allows an estimate of the form $p(t,x,x)\le F_1(x)F_2(t)$ for all $(x,t)\in X\times\mathbb{R_+}$; we refer to $F_1$ as the \emph{control function}. We show that such an estimate leads to rather satisfying abstract results on relative compactness of perturbations of $H$ by potentials. It came as a surprise to us, however, that such an estimate holds for the Laplace-Beltrami operator on \emph{any} Riemannian manifold. In particular, using a domination principle, one can deduce from the latter fact a very general result on the relative compactness of perturbations by potentials of the Bochner Laplacian associated with a Hermitian bundle 
$(E, h^E,\nabla^E)$ over an arbitrary Riemannian manifold $(M,g)$; in fact, only quantities of  order zero in $g$ enter in the estimates. We extend this result to 
weighted Riemannian manifolds, where under lower curvature bounds on the 
$\alpha$-Bakry-\'{E}mery tensor one can construct quite explicit control functions, and to any weighted graph, where the control function is expressed in terms of the vertex weight function.    
\end{abstract}

\tableofcontents

\section{Introduction}
Consider the Schr\"odinger operator $ - \Delta + V$ in $\mathbb{R}^3$. It is well 
known that $V(- \Delta + 1)^{-1}$ is compact if we have a decomposition $V = V_1 + V_2$ such that $V_1\in\mathsf{L}^2(\IR^3)$ and $V_2$ is bounded and vanishes at $\infty$ 
(cf. \cite[Example 6, p. 117]{Re4}, and also \cite[Section 11.2]{davies} for 
further Euclidean results). Hence $V$ is a relatively compact perturbation of $- \Delta $, 
and the perturbation preserves the domain, the self-adjointness, and the essential 
spectrum; this result applies in particular to the hydrogen atom.\\
In the case of a non-parabolic three-dimensional Riemannian manifold $M$, the 
analogous result for the Laplace-Beltrami operator with the potential 
$V(x) := -\kappa G(x,x_0), x_0\in M$ arbitrary, $\kappa\ge 0$, and 
$$
G(x,x_0)=\int^{\infty}_0 {\rm e}^{t\Delta}(x,x_0) \Id t
$$
the minimal non-negative Green's function for (the Friedrichs realization of) 
$-\Delta\geq 0$, has been proved in \cite{enciso} only under additional geometric assumptions, 
namely geodesic completeness, non-negativity of the Ricci curvature, and a 
Euclidean volume growth of geodesic balls $B(x,r)$ from below. These assumptions lead to a heat kernel estimate of the form
\begin{align}\label{hupa}
\mathrm{e}^{t\Delta}(x,x)\leq C t^{-3/2}\>\>\text{ for all $t>0$},
\end{align}
which is used heavily in the proof given in \cite{enciso} of the asserted relative compactness, not only for the existence of $G(x,x_0)$, but as a property of the underlying \lq\lq{}free operator\rq\rq{}.\\
\newline
In this paper we are interested in general relative compactness results for (certain generalizations of) the Laplace-Beltrami operator on Riemannian manifolds, in particular, in results that do not require too much of a Euclidean behaviour of the geometry from the beginning. Here, we are particularly interested in situations where the \lq\lq{}free operator\rq\rq{} $H$ is actually itself some kind of perturbation of the Laplace-Beltrami operator, such as a magnetic Schrödinger operator  
$$
H=(\Id+\sqrt{-1}\beta)^{\dagger}(\Id+\sqrt{-1}\beta)+V\rq{}
$$
with $\beta$ a magnetic potential and $V\rq{}:M\to\IR$ a potential, and the perturbation of $H$ is given by a potential $V:M\to \IR$. In the latter situation, a very general compactness result for the operator $V(H+1)^{-1}$ will in fact follow easily from some of our main results (cf. Example \ref{wasser}).
\newline

In order to explain our main results, let now $M$ be an arbitrary Riemannian manifold. The central observation of this paper are the following two heat kernel estimates: The first one holds on {\em any} Riemannian manifold whatsoever, and involves only a zero order 
geometric quantity which we call the Euclidean radius at $x\in M$ with 
distortion $b > 1$, to be denoted $r_{\rm Eucl}(x,b)$ (see Def. \ref{deee}). The second heat kernel estimate is one for complete Riemannian manifolds with nonnegative Ricci curvature, and follows from the well-known Li-Yau heat kernel estimate:

\begin{Theoreme}\label{bou} a) There exists a universal constant $C>0$, which only depends on $m=\dim(M)$, such that for every $(t,x)\in (0,\infty)\times M$ one has
$$
\mathrm{e}^{t\Delta}(x,x)\leq C \; \min (r_{\rm Eucl}(x,2),1)^{-m} \big(t^{-m/2} +1\big)=:F_1(x)\cdot F_2(t).
$$
b) If $M$ is complete with $\mathrm{Ric}\geq 0$, then there exists a universal constant $C>0$, which only depends on $m=\dim(M)$, such that for every $(t,x)\in (0,\infty)\times M$ one has (cf. Section \ref{dhjkp})
$$
\mathrm{e}^{t\Delta}(x,x)\leq   C \; \mathrm{vol}\big(B(x,1)\big)^{-1}\cdot\big(t^{-m/2} +1\big)=:F_1(x)\cdot F_2(t).
$$
\end{Theoreme}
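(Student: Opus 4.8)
The plan is to establish part a) first and then deduce part b) essentially for free from the Li--Yau estimate together with a volume-comparison argument; the two statements share the same structural input.

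For part a), the key point is the on-diagonal bound $\mathrm{e}^{t\Delta}(x,x)\le C\,\min(r_{\mathrm{Eucl}}(x,2),1)^{-m}(t^{-m/2}+1)$. I would proceed as follows. Fix $x\in M$ and set $\rho:=\min(r_{\mathrm{Eucl}}(x,2),1)$. By the very definition of the Euclidean radius with distortion $b=2$ (Definition \ref{deee}), on the geodesic ball $B(x,\rho)$ the metric $g$ is comparable to the Euclidean metric in a pulled-back chart, with all distortion constants controlled by $2$; in particular $B(x,\rho)$ contains, in chart coordinates, a Euclidean ball of radius $\sim\rho$, and the Riemannian volume is comparable to Lebesgue measure there. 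The strategy is then a standard \emph{Faber--Krahn / Nash-type} argument localized to this ball: from the Euclidean Faber--Krahn inequality transported through the bi-Lipschitz chart one obtains a Faber--Krahn inequality on $B(x,\rho)$ of the form $\lambda_1(\Omega)\ge c\,\rho^{-2}(|B(x,\rho)|/|\Omega|)^{2/m}$ for $\Omega\subset B(x,\rho)$, with $c$ depending only on $m$. By the Dirichlet-to-global domination of heat kernels (the Dirichlet heat kernel of any subdomain is pointwise $\le$ the global one is the wrong direction, so instead one uses that the \emph{global} on-diagonal kernel is bounded by the Dirichlet on-diagonal kernel of $B(x,\rho)$ plus an error controlled by exit-time/finite-propagation estimates — or, more cleanly, one runs the Nash argument directly on $M$ using only the \emph{local} Faber--Krahn inequality valid up to scale $\rho$). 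This yields $\mathrm{e}^{t\Delta}(x,x)\le C\rho^{-m}$ for $t\le\rho^2$, and for $t\ge\rho^2$ one uses that $\mathrm{e}^{t\Delta}(x,x)$ is non-increasing in $t$ together with $\rho\le 1$ to absorb everything into $C\rho^{-m}(t^{-m/2}+1)$. The universal constant depends only on $m$ because every geometric input has been reduced to the distortion bound $2$ and the Euclidean Faber--Krahn constant.

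For part b), assume $M$ complete with $\mathrm{Ric}\ge 0$. The Li--Yau estimate gives $\mathrm{e}^{t\Delta}(x,x)\le C\,\mathrm{vol}(B(x,\sqrt t))^{-1}$ for all $t>0$, with $C=C(m)$. For $t\le 1$, Bishop--Gromov volume comparison (again using only $\mathrm{Ric}\ge 0$) gives $\mathrm{vol}(B(x,\sqrt t))\ge c\,t^{m/2}\,\mathrm{vol}(B(x,1))$, hence $\mathrm{e}^{t\Delta}(x,x)\le C\,\mathrm{vol}(B(x,1))^{-1}t^{-m/2}$. For $t\ge 1$, monotonicity of $t\mapsto\mathrm{e}^{t\Delta}(x,x)$ and $\mathrm{vol}(B(x,\sqrt t))\ge\mathrm{vol}(B(x,1))$ give $\mathrm{e}^{t\Delta}(x,x)\le C\,\mathrm{vol}(B(x,1))^{-1}$. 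Combining the two ranges produces the asserted bound $\mathrm{e}^{t\Delta}(x,x)\le C\,\mathrm{vol}(B(x,1))^{-1}(t^{-m/2}+1)$.

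The main obstacle is the localization in part a): one must make precise the passage from the bi-Lipschitz Euclidean chart on $B(x,\rho)$ to an honest on-diagonal heat kernel bound for the \emph{global} operator $\Delta$ on $M$, since the global heat kernel is not dominated by a Dirichlet one. The clean route is to invoke the equivalence (due to Grigor'yan, Carlen--Kusuoka--Stroock, and others) between a Faber--Krahn inequality valid for all $\Omega\subset B(x,\rho)$ and an on-diagonal bound $\mathrm{e}^{t\Delta}(x,x)\le C\rho^{-m}$ for $t\le\rho^2$ — this equivalence is purely local in the relevant time range and requires no global geometry, only that the ambient space is stochastically complete enough for the heat semigroup to be given by a kernel, which is automatic on a Riemannian manifold with the Friedrichs Laplacian. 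Getting the dependence of all constants to collapse to a dependence on $m$ alone is then a bookkeeping matter, since the distortion has been fixed at $2$.
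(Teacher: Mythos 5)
Your approach matches the paper's in both parts. For a), the paper proves Theorem \ref{gri} by verifying a local Faber--Krahn inequality on $B(x,R(x))$, with $R(x)\sim\min(r_{\mathrm{Eucl}}(x,b_1,b_2),\epsilon)$, transported from the Euclidean Faber--Krahn inequality through the bi-Lipschitz chart, and then invoking Grigor'yan's Theorem 15.14 in \cite{buch} (the parabolic $\mathsf{L}^2$-mean value inequality), which is precisely the ``local Faber--Krahn $\Rightarrow$ local on-diagonal bound'' equivalence you appeal to; for b), the paper (Section \ref{dhjkp}) combines the Li--Yau estimate with the Bishop--Gromov doubling property exactly as you describe.
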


Theorem \ref{bou}.a) (which does not even require completeness) is a straightforward consequence of a heat kernel estimate which is essentially due to A. Grigor'yan and which follows from the 
parabolic $\IL^2-$mean value inequality (cf. \cite[Thm. 15.14] {buch}). We give a detailed proof with explicit constants of this estimate in the context of weighted Riemannian manifolds in Theorem \ref{gri}. Furthermore, we will show in  Proposition \label{below} that $r_{\rm Eucl}(x,b)$ can be estimated from below by familiar geometric quantities. \\
Concerning Theorem \ref{bou}.b), we will in fact prove a more general variant of the latter estimate (cf. in Theorem \ref{ghhp}) in the context of weighted Riemannian manifolds that have nonnegative $\alpha$-Bakry-\'{E}mery tensor for some $\alpha>0$ (cf. Definition \ref{bakry}), where in this general case the volume is replaced with the weighted volume, and more importantly, the function $t^{-m/2}$ has to be replaced with $t^{-(m/2+\alpha/2)}$, which has a strong influence on the results below.\\
The above estimates now lead to the conjecture that good conditions for relative 
compactness of a potential could be expressed in terms of the "control" functions 
$F_1$ and $F_2$. This can, in fact, be done in a purely measure theoretic setting, and constitutes the first main result, Theorem \ref{ddp} below, of 
this work; it will be proved in Thm. \ref{main} below. Consider then an arbitrary sigma-finite measure space, 
$(X,\mu)$, and let 
\begin{align*}
p:(0,\infty)\times X\times X\longrightarrow 
[0,\infty),\>\>(t,x,y)\longmapsto  p(t,x,y)  =: p(t,x,y),
\end{align*}
be a {\em pointwise consistent $\mu$-heat kernel} (see Definition \ref{heat} 
below for details). Any 
such function $p$ canonically induces a strongly continuous 
contraction semigroup $(P_t)_{t\geq 0}$ the generator of which will be 
denoted by $H_p$.
\begin{Theoreme}\label{ddp}
Assume that there exist functions $F_1$ on $X$ and $F_2$ on 
on $(0,\infty)$ such that for all $t>0$, $x\in X$, and for some $q \geq  1$
\begin{align}\label{ffop}
&p(t,x,x)\leq F_1(x)F_2(t),\text{ }\\
&\int^{\infty}_{0}\mathrm{e}^{-t}F_2(t)^{\f{1}{2q}}\Id t<\infty.
\end{align} 
Then for any potential $V:X\to \mathbb{R}$ of the form $V=V_1+V_2$, the 
operator 
$V(H_p+1)^{-1}$ is compact if one of the following two conditions hold:

\begin{align}
&q=1,\,\,\, V_1\in\IL^2(X, F_1{\rm d}\mu), \,\,\,
V_2\in\IL^{\infty}_{\infty}(X,{\rm d}\mu)\cap \IL_{\infty}(X,F_1{\rm d}\mu),\\ 
&q>1,\,\,\, F_1\equiv 1,\,\,\, V_1\in\IL^{2q}(X,{\rm d}\mu), \,\,\,
V_2\in\IL^{\infty}_{\infty}(X,{\rm d}\mu). 
\end{align}
Here, $\IL^{\infty}_{\infty}(X,\cdot)$ denotes the space of functions 
in $\IL^{\infty}(X,\cdot)$ which vanish at infinity w.r.t. the 
relevant measure, as defined in Def. 2.3.
\end{Theoreme}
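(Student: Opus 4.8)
The plan is to represent the resolvent by the Laplace transform of the semigroup, $(H_p+1)^{-1}=\int_0^\infty \mathrm{e}^{-t}P_t\,\Id t$ (a Bochner integral converging in operator norm, since $\|P_t\|\leq 1$), so that $V(H_p+1)^{-1}=\int_0^\infty \mathrm{e}^{-t}\,V P_t\,\Id t$, and then to reduce the whole statement to two ingredients: quantitative operator norm bounds on the individual maps $VP_t$ extracted from the on-diagonal estimate $p(t,x,x)\leq F_1(x)F_2(t)$, together with the elementary fact that the compact operators form a norm-closed ideal. Since $V=V_1+V_2$, it suffices to treat $V_1(H_p+1)^{-1}$ and $V_2(H_p+1)^{-1}$ separately.

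First I would establish the kernel estimates. Using symmetry of $p$ and the Chapman--Kolmogorov identity one gets $\int_X p(t,x,y)^2\,\Id\mu(y)=p(2t,x,x)\leq F_1(x)F_2(2t)$; hence for any measurable $W$ with $W\in\IL^2(X,F_1\Id\mu)$ the operator $WP_t$ on $\IL^2(X,\Id\mu)$ is Hilbert--Schmidt with $\|WP_t\|_{\mathrm{HS}}\leq F_2(2t)^{1/2}\|W\|_{\IL^2(F_1\Id\mu)}$, in particular compact. When $F_1\equiv 1$ the same identity yields the ultracontractivity bounds $\|P_t\|_{\IL^1\to\IL^2}=\|P_t\|_{\IL^2\to\IL^\infty}\leq F_2(t)^{1/2}$ (via $P_t=P_{t/2}P_{t/2}$ and duality), and interpolating these with $\|P_t\|_{\IL^2\to\IL^2}\leq 1$ and combining with H\"older's inequality gives, for $q>1$ and $W\in\IL^{2q}(X,\Id\mu)$, the operator norm bound $\|WP_t\|_{\IL^2\to\IL^2}\leq F_2(t)^{1/(2q)}\|W\|_{\IL^{2q}(\Id\mu)}$; here $WP_t$ is still compact, since $W$ can be approximated in $\IL^{2q}$ by bounded functions with support of finite $\mu$-measure, for which $WP_t$ is Hilbert--Schmidt.

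Next I would prove that $W(H_p+1)^{-1}$ is compact whenever $W$ lies in the relevant ``good'' class, i.e. $W\in\IL^2(X,F_1\Id\mu)$ when $q=1$, and $W\in\IL^{2q}(X,\Id\mu)$ when $q>1$. In the case $q>1$ the above gives $\int_0^\infty \mathrm{e}^{-t}\|WP_t\|_{\IL^2\to\IL^2}\,\Id t\leq\|W\|_{\IL^{2q}}\int_0^\infty \mathrm{e}^{-t}F_2(t)^{1/(2q)}\,\Id t<\infty$ by hypothesis \eqref{ffop}, so $W(H_p+1)^{-1}=\int_0^\infty \mathrm{e}^{-t}WP_t\,\Id t$ is a norm-convergent integral of compact operators, hence compact. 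In the case $q=1$ I would split $\int_0^\infty=\int_0^1+\int_1^\infty$: on $[0,1]$ one has $\int_0^1 \mathrm{e}^{-t}\|WP_t\|_{\mathrm{HS}}\,\Id t\leq\tfrac12\|W\|_{\IL^2(F_1\Id\mu)}\int_0^2 F_2(s)^{1/2}\,\Id s<\infty$ (finiteness near $s=0$ is exactly the hypothesis, and away from $0$ it is automatic), so $\int_0^1 \mathrm{e}^{-t}WP_t\,\Id t$ is Hilbert--Schmidt; on $[1,\infty)$ the semigroup property $P_t=P_1P_{t-1}$ yields $\int_1^\infty \mathrm{e}^{-t}WP_t\,\Id t=\mathrm{e}^{-1}(WP_1)(H_p+1)^{-1}$, a Hilbert--Schmidt operator composed with a bounded one. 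Either way $W(H_p+1)^{-1}$ is compact. (Throughout, the routine $t$-continuity and measurability of $t\mapsto WP_t$ into the space of Hilbert--Schmidt, resp. bounded, operators on $(0,\infty)$ follows from strong continuity of $(P_t)$ together with the kernel bounds, and legitimizes these Bochner integrals.)

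Finally I would assemble the theorem. The term $V_1(H_p+1)^{-1}$ is compact directly by the previous step, since $V_1$ belongs to the good class in either case. For $V_2$ I would exploit the vanishing-at-infinity hypotheses by truncation: set $V_2^{(\varepsilon)}:=V_2\,\mathbf 1_{\{|V_2|>\varepsilon\}}$. Since $V_2$ is bounded and $\{|V_2|>\varepsilon\}$ has finite measure with respect to $F_1\Id\mu$ (case $q=1$, using $V_2\in\IL_\infty(X,F_1\Id\mu)$) resp. with respect to $\Id\mu$ (case $q>1$, using $V_2\in\IL^\infty_\infty(X,\Id\mu)$), the function $V_2^{(\varepsilon)}$ lies in the good class, so $V_2^{(\varepsilon)}(H_p+1)^{-1}$ is compact; moreover $\|(V_2-V_2^{(\varepsilon)})(H_p+1)^{-1}\|\leq\big\||V_2|\,\mathbf 1_{\{|V_2|\leq\varepsilon\}}\big\|_{\IL^\infty}\,\|(H_p+1)^{-1}\|\leq\varepsilon$. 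Letting $\varepsilon\to 0$ and using norm-closedness of the compact operators shows $V_2(H_p+1)^{-1}$, hence $V(H_p+1)^{-1}$, is compact. I expect the main obstacle to be the second and third paragraphs: passing from the purely on-diagonal kernel bound to the Hilbert--Schmidt and operator norm estimates on $VP_t$ (requiring symmetry, Chapman--Kolmogorov and $\IL^p$-interpolation of the semigroup) and organizing the $t$-integral so that it converges in the correct operator ideal; the truncation for $V_2$ is where the somewhat delicate ``vanishing at infinity'' conditions enter and is the place to be careful.
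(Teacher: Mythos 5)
Your proposal is correct and follows essentially the same skeleton as the paper's proof of Theorem \ref{main}, but with a few noteworthy technical variations. The shared core is: the symmetry/Chapman--Kolmogorov identity $\int p(t,x,y)^2\,\Id\mu(y)=p(2t,x,x)$ giving the Hilbert--Schmidt bound on $\hat W P_t$; the operator-norm bound $\|\hat W P_t\|_{\mu,2\to2}\leq F_2(t)^{1/(2q)}\|W\|_{\mu,2q}$ when $F_1\equiv1$; the Laplace transform of the semigroup; and the level-set truncation of $V_2$ combined with norm-closedness of the compact operators. Where you diverge: (i) you get the operator-norm bound from ultracontractivity $\|P_t\|_{\mu,2\to\infty}\leq F_2(t)^{1/2}$ plus Riesz--Thorin interpolation and H\"older, whereas the paper proves a single three-exponent H\"older estimate (Lemma \ref{ddo}) and plugs in $\alpha=2q^{*}$; (ii) for compactness when $q>1$ you show $\hat W P_t$ is itself compact for each $t$ (approximate $W$ in $\mathsf{L}^{2q}$ by bounded finite-measure-support functions, for which $\hat W_n P_t$ is Hilbert--Schmidt) and then use that a norm-convergent Bochner integral of compact operators is compact, while the paper first shows $\hat W_n(H_p+1)^{-1}$ is compact via the Lenz--Stollmann--Wingert equivalence and then passes to the limit in $n$ using the Step 3 norm bound; (iii) for $q=1$ the paper takes $a\geq2$ in $(H_p+a)^{-1}$ to absorb the factor $F_2(2t)$ and then invokes the first resolvent identity, whereas you split $\int_0^1+\int_1^\infty$ and rewrite the tail as $\mathrm{e}^{-1}(\hat W P_1)(H_p+1)^{-1}$ via the semigroup law, which is slightly slicker. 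Both routes are sound; yours is a bit more self-contained (no appeal to the Lenz--Stollmann equivalence), at the cost of implicitly relying on Bochner measurability of $t\mapsto\hat W P_t$ in the operator-norm (resp.\ Hilbert--Schmidt) topology, which you flag as routine but do not verify -- the paper tacitly does the same when it pushes the Hilbert--Schmidt norm through the Laplace integral, so this is a shared bookkeeping point rather than a gap.
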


In fact, there is a more general version of this result which holds on 
measurable vector bundles $E\to X$ (cf. Cor. \ref{mag1}), There, 
we can replace $H_p$ by an arbitrary self-adjoint operator $\tilde{H}\geq 0$ on a 
space of $\IL^2$-sections $\Gamma_{\IL^2}(M,E;\Id\mu)$ which satisfies a Kato-type 
domination property $\tilde{H}\succeq H_p$, and the potential is understood to be 
a self-adjoint section in $\mathrm{End}(E)\to M$.\\
\newline
Ultimately, the general vector bundle variant of Theorem \ref{ddp} can be brought into the following form for Riemannian manifolds, where, in this introduction we restrict ourselves to $\dim(M)\leq 3$, noting that the (more technical) results for higher dimensions can be found in Corollary \ref{has} and Corollary \ref{has2}.

\begin{Theoreme}\label{haes} Let $M$ be a  Riemannian manifold with dimension $m\leq 3$, let $\nabla$ be a unitary covariant derivative on the Hermitian vector bundle $E\to M$, and let $0\leq V\in\Gamma_{\mathsf{L}^1_{\mathrm{loc}}}(M,\mathrm{End}(E))$ be a potential. We denote the Friedrichs realization of $\nabla^{\dagger}\nabla$ in $\Gamma_{\mathsf{L}^2}(M,E)$ with the same symbol again, and with $\nabla^{\dagger}\nabla+V$ the corresponding form sum, and we assume that $W\in\Gamma(M,\mathrm{End}(E))$ is a potential of the form $W=W_1+W_2$ with $W_2$ bounded.\\
a) If there exists $b>1$ such that for all $c>0$ one has 
\begin{align*} 
\int  \big(|W_1|(x)^{2} +1_{\{|W_2|>c\}}(x)\big)    \min(r_{\mathrm{Eucl}}(x,b),\epsilon)^{-m}     \mathrm{vol}(\Id x) <\infty,
\end{align*}
then $W(\nabla^{\dagger}\nabla+V+1)^{-1}$ is a compact operator in $\Gamma_{\mathsf{L}^2}(M,E)$.\\
b) If $M$ is complete with $\mathrm{Ric}\geq 0$, and if for all $c>0$ one has 
\begin{align*} 
\int  \big(|W_1 |(x)^{2}  +1_{\{|W_2|>c\}}(x)  \big)     \mathrm{vol} \big(B_g(x,1)\big)^{-1} \mathrm{vol}(\Id x) <\infty,
\end{align*}
then again $W(\nabla^{\dagger}\nabla+V+1)^{-1}$ is a compact operator in $\Gamma_{\mathsf{L}^2}(M,E)$.
  \end{Theoreme}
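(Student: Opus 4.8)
The plan is to reduce Theorem \ref{haes} to the abstract vector-bundle version of Theorem \ref{ddp} (i.e.\ Cor.\ \ref{mag1}) via the scalar heat kernel estimates of Theorem \ref{bou}. First I would set up the scalar comparison operator $H_p := -\Delta$, the Friedrichs realization of the Laplace--Beltrami operator on $\mathsf{L}^2(M,\mathrm{vol})$, whose heat kernel is the $\mu = \mathrm{vol}$-heat kernel $p(t,x,y) = \mathrm{e}^{t\Delta}(x,y)$. By Theorem \ref{bou}.a), in case a) this kernel satisfies the on-diagonal bound $p(t,x,x) \le F_1(x)F_2(t)$ with $F_1(x) = C\min(r_{\mathrm{Eucl}}(x,b),1)^{-m}$ (the passage from $b=2$ to general $b>1$ is routine from Def.\ \ref{deee}, since $r_{\mathrm{Eucl}}(\cdot,b)$ is monotone in $b$) and $F_2(t) = t^{-m/2}+1$; in case b), by Theorem \ref{bou}.b), the same holds with $F_1(x) = C\,\mathrm{vol}(B_g(x,1))^{-1}$. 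Since $m \le 3$, we have $\int_0^\infty \mathrm{e}^{-t} F_2(t)^{1/2}\,\mathrm{d}t < \infty$ (the singularity $t^{-m/4}$ at $0$ is integrable because $m/4 < 1$), so hypothesis \eqref{ffop} of Theorem \ref{ddp} holds with $q=1$.

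Next I would invoke the Kato-type domination $\nabla^{\dagger}\nabla + V \succeq -\Delta$: for a unitary covariant derivative $\nabla$ on a Hermitian bundle and $0 \le V$, the generalized Kato inequality gives $|\mathrm{e}^{-t(\nabla^{\dagger}\nabla+V)}s|(x) \le \mathrm{e}^{t\Delta}|s|(x)$ pointwise for sections $s$, which is exactly the domination property $\tilde H \succeq H_p$ required in Cor.\ \ref{mag1} with $\tilde H = \nabla^{\dagger}\nabla + V$ (here the form-sum interpretation of $\nabla^{\dagger}\nabla + V$ for $V \in \Gamma_{\mathsf{L}^1_{\mathrm{loc}}}$ matters; one uses that the quadratic-form domain of $V$ contains $C_c^\infty$ to make sense of the form sum and to ensure the domination survives the Friedrichs/form-sum construction — this is standard, but I would cite the precise statement). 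Then Cor.\ \ref{mag1} applies to the endomorphism-valued potential $W = W_1 + W_2$: the operator $W(\nabla^{\dagger}\nabla+V+1)^{-1}$ is compact provided $|W_1| \in \mathsf{L}^2(M,F_1\,\mathrm{vol})$ and $|W_2| \in \mathsf{L}^\infty_\infty(M,\mathrm{vol}) \cap \mathsf{L}_\infty(M,F_1\,\mathrm{vol})$.

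Finally I would translate these abstract $\mathsf{L}^2(F_1\,\mathrm{vol})$-conditions into the stated integral criteria. The condition $|W_1| \in \mathsf{L}^2(M,F_1\,\mathrm{vol})$ is, up to the harmless constant $C$, exactly $\int |W_1|(x)^2 F_1(x)\,\mathrm{vol}(\mathrm{d}x) < \infty$, i.e.\ the $|W_1|^2$-term in the displayed integrals (with $\epsilon$ playing the role of the truncation constant $1$, after noting monotonicity in the truncation level absorbs the difference into the bounded part). For $W_2$: the hypothesis "$\int 1_{\{|W_2|>c\}}(x)F_1(x)\,\mathrm{vol}(\mathrm{d}x) < \infty$ for all $c>0$" says precisely that the measure $F_1\,\mathrm{vol}$ of the super-level sets of $|W_2|$ is finite and tends to $0$ as $c \to \infty$, which is the definition of $|W_2|$ vanishing at infinity with respect to $F_1\,\mathrm{vol}$, i.e.\ $|W_2| \in \mathsf{L}_\infty(M,F_1\,\mathrm{vol})$; and since $F_1 > 0$ everywhere and $F_1\,\mathrm{vol}$ and $\mathrm{vol}$ have the same null sets and the same notion of "neighborhood of infinity" in the relevant sense, together with $W_2$ bounded this also yields $|W_2| \in \mathsf{L}^\infty_\infty(M,\mathrm{vol})$ — this equivalence between the two "vanishing at infinity" notions is the one point deserving care. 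Assembling these, Cor.\ \ref{mag1} delivers the compactness claim in both a) and b). The main obstacle I anticipate is not any single hard estimate but the careful bookkeeping in this last translation step — matching the truncation constant $\epsilon$ against the $\mathsf{L}^\infty_\infty$/$\mathsf{L}_\infty$ definitions and checking that the form-sum domination $\nabla^{\dagger}\nabla+V \succeq -\Delta$ is legitimately available under only $V \in \Gamma_{\mathsf{L}^1_{\mathrm{loc}}}$, $V \ge 0$.
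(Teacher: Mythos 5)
Your proposal is correct and follows essentially the same route as the paper: the on-diagonal heat kernel estimate (Theorem \ref{gri} for part a), the Li-Yau bound plus volume doubling for part b)) supplies a control function, Lemma \ref{dlk} supplies the Kato domination $\nabla^{\dagger}\nabla+V\succeq -\Delta$ via the covariant Feynman--Kac formula, and Corollary \ref{mag1} with $q=1$ then delivers compactness.

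One step is stated imprecisely. To pass from $|W_2|\in\mathsf{L}_\infty(M,F_1\,\mathrm{vol})$ to $|W_2|\in\mathsf{L}^\infty_\infty(M,\mathrm{vol})$ you appeal to ``$F_1>0$ everywhere'' and to $F_1\,\mathrm{vol}$ and $\mathrm{vol}$ having ``the same notion of neighborhood of infinity''; what is actually needed is the stronger fact $\inf_M F_1>0$, which gives
\begin{align*}
\mathrm{vol}\{|W_2|>c\}\ \le\ (\inf F_1)^{-1}\int_{\{|W_2|>c\}}F_1\,\mathrm{d}\mathrm{vol}\ <\ \infty.
\end{align*}
Positivity alone would not suffice if $F_1$ could decay to $0$ at infinity. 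Fortunately the bound does hold: in part a) because the control function involves $\min(r_{\mathrm{Eucl}}(\cdot,b),\epsilon)^{-m}\ge\epsilon^{-m}$, and in part b) because Bishop--Gromov under $\mathrm{Ric}\ge 0$ gives $\sup_x\mathrm{vol}(B_g(x,1))<\infty$. The paper's proofs of Corollaries \ref{has} and \ref{has3} invoke $\inf F_1>0$ explicitly for exactly this purpose (it is also used there to turn the additive heat-kernel bound into the required multiplicative form $F_1(x)F_2(t)$), so you should make it explicit as well.
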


Note that Theorem \ref{haes}.a) does not require any further assumptions on the Riemannian manifold. The point of part b) is that in case one actually has a lower control on the Ricci curvature, one can simply pick a somewhat more explicit control function. We explain in Example \ref{wasser} how one can handle hydrogen type problems (even in the presence of a magnetic field and an additional positive potential) on curved space within the setting of Theorem \ref{haes}.b). These observations clarify, in particular, that a lower Euclidean volume growth assumption is actually only required to get a well-behaved Coulomb potential at all, and not to get general results of the form \lq\lq{}$W$ is integrable in some (weighted) sense $\Rightarrow$ $W(-\Delta+1)^{-1}$ is compact\rq\rq{}.\\
We will also provide a variant of Theorem \ref{haes}.b) and its \lq\lq{}high-dimensional\rq\rq{} version for manifolds with nonnegative $\alpha$-Bakry-\'{E}mery tensor for some $\alpha>0$, where, in addition to obvious modifications, this generalization has the effect of replacing $m\leq 3$ with $m+\alpha\leq 3$. This is the content of Corollary \ref{has3}. As in many other applications of such weighted Riemannian manifolds, this leads to an interpretation of $\alpha$ as an additional \lq\lq{}virtual dimension\rq\rq{} of the underlying space (cf. Remark \ref{michel}).

\vspace{2mm}

Finally, we mention that Theorem \ref{ddp} and its covariant version can 
also be applied to {every (possibly locally infinite) graph} $X$ that 
carries weights on its edges and its vertices, as the corresponding heat 
kernel $p(t,x,y)$ always satisfies an upper bound of the form
\begin{align*}
p(t,x,x)\leq 1/\varrho(x)=:F_1(x),\text{ with $\varrho:X\to (0,\infty)$ the 
vertex weight function,}
\end{align*}
Here, the latter bound follows intuitively from observing that $p(t,x,x)\varrho(x)$ 
is nothing but the probability of finding the underlying Markoff particle in 
$x\in X$ at the time $t$, when conditioned to start in $x$. The corresponding 
relative compactness result is formulated in Theorem \ref{grap}.\vspace{8mm}

{\bf Conventions:} Given a measure space $(X,\IAA,\mu)$, we will omit the 
underlying sigma-algebra $\IAA$ in the notation, and simply speak of 
\lq\lq{}measurable sets\rq\rq{}, whenever there is no danger of confusion. The 
$\mathsf{L}^q$-norm corresponding to the measure space $(X,\mu) := 
(X,\IAA,\mu)$ will be denoted by 
$\left\|\cdot\right\|_{\mu,q}$, $q\in [1,\infty]$, and the corresponding 
operator norm of bounded operators 
$\mathsf{L}^{\alpha}(X,\Id\mu) \to \mathsf{L}^{\beta}(X,\Id\mu)$ by 
$\left\|\cdot\right\|_{\mu,\alpha\to \beta}$. 
\newline
In the above situation, given a measurable function $F:X\to 
[0,\infty)$, we denote by $\Id\mu_F$ the measure $\Id \mu_F(x):=F(x)\Id\mu(x)$. \\The complex Hilbert space $\mathsf{L}^2(X,\Id\mu)$ is equipped with the 
scalar product 
$\langle f,h\rangle_{\mu}=\int_X \overline{f}h \Id \mu$, which is thus anti-linear 
in its first slot.\\
Let $\ILL$ denote space of of bounded operators between Banach spaces and 
for $q\in [1,\infty]$, let $\ILL^{q}$ denote the $q$-th Schatten class of 
bounded operators between Banach spaces. In particular, $q=2$ corresponds to 
the Hilbert-Schmidt case and $q=\infty$ to the compact case. We refer the 
reader to \cite{weid} for further notation (and the operator theoretic facts) that we will use in the sequel.

\section{An abstract result for operators on measure spaces}\label{sec1}

This section is devoted to the formulation and the proof of the above stated 
Theorem \ref{ddp}, as well as its generalization to measure theoretic vector 
bundles.

Let $(X,\mu)$ be a not necessarily complete sigma-finite 
measure space.  We will be interested in certain nonnegative operators 
(and perturbations thereof) in $\mathsf{L}^2(X,\Id\mu)$ which generate 
semigroups that are defined by appropriate integral kernels as follows. 

\begin{Definition}\label{heat} A measurable map
\begin{align}
p:(0,\infty)\times X\times X\longrightarrow [0,\infty),\>\>(t,x,y)\longmapsto  
p(t,x,y)\label{p1}
\end{align}
is called a {\it pointwise consistent $\mu$-heat-kernel}
if it satisfies the following properties: 
\begin{align}
&\>\> p(t+s,x,y)=\int_X p(t,x,z)p(s,z,y) \Id\mu ( z)\>\>\text{ for all 
$t,s>0$, $x,y\in X$}\label{A1},\\
&\>\> p(t,x,y)=p(t,y,x) \>\>\text{ for all $t>0$, $x,y\in 
X$}\label{A2},\\
&\>\>\int_X p(t,x,y)\Id\mu( y)\leq 1\>\>\text{for all $t>0$, 
$x\in X$}\label{A3},\\
&\> \text{ with }\>
P_t f(x):=\int_X p(t,x,y) f(y)\Id\mu( y), \>\>t>0, f\in \mathsf{L}^2(X,\Id\mu),
\nn\\
&\>\>\text{ we have }\>\>  \lim_{t\to 0+} \left\|P_t f-f \right\|_{\mu,2}=0
\label{con}.
\end{align}
\end{Definition}
The above definition (see also \cite{lenz}) simply abstracts heat kernel properties that one hold on Riemannian manifolds. Two remarks are in order:

\begin{Remark} 1. Note that $P_t f$ is indeed a well-defined element 
of $\mathsf{L}^2(X,\Id\mu)$. To see this, note first that 
$p(t,x,\cdot)\in\mathsf{L}^2(X,\Id\mu)$ by (8), so that $x\mapsto P_t f(x)$ 
is a well-defined measurable function by Cauchy-Schwarz, and one can 
estimate with Cauchy-Schwarz and (\ref{A3}) as follows:
\begin{align}
&\left\|P_t f \right\|^2_{\mu,2} \leq \int_X\left(\int_X \sqrt{ p(t,x,y)} 
\sqrt{ p(t,x,y)} |f(y)|\Id\mu(y)\right)^2 \Id\mu( x)  \nn\\
&\leq \int_X  \int_X  p(t,x,z)\Id\mu( z)  \int_X p(t,x,y) |f(y)|^2 \Id\mu( y) 
\Id \mu( x) \nn\\
&\leq \int_X   \int_X p(t,x,y) \Id\mu( x)   |f(y)|^2 \Id\mu( y)   
\leq  \left\|f \right\|^2_{\mu,2},\label{contr}.   
\end{align}
\vspace{1.1mm}
2. Definition \ref{heat} is in the spirit of  in \cite[Definition 2.1]{gri} 
(where the notion of \lq\lq{}$\mu$-heat-kernels\rq\rq{} is defined), with one 
essential difference: We have required (\ref{A1}), (\ref{A2}) and (\ref{A3}) to 
hold {\it pointwise} and {\it not} only in the $\mu_{\otimes_2}$ 
(the product measure) sense. Indeed, since one typically has 
$$
\mu_{\otimes_2}\big\{(x,y)\left|(x,y)\in X\times X, x=y\big\}\right.=0 
$$
in nondiscrete applications, we will need (\ref{A1}), (\ref{A2}) to hold 
pointwise in the proof of Theorem \ref{main} below. A slightly more general 
procedure to avoid this problem would have been to use Definition 2.1 in 
\cite{gri} literally and to formulate Assumption \ref{gaus} below in terms of the 
rhs of (\ref{A1}), with '$\mu$-$\mathrm{ess \ sup}$' instead of '$\sup$'. 
However, thinking of $p$ as the transition probability density of a Markoff 
process, we found the latter generalization unnecessary in view of the 
applications that we have in mind.
\end{Remark}

\emph{We fix an arbitrary pointwise consistent $\mu$-heat-kernel $p$ in the 
following.}\vspace{1mm}

In view of (\ref{contr}), setting $P_0:=\mathrm{id}_{\mathsf{L}^2(X,\Id\mu)}$, 
the family $(P_t)_{t\geq 0}\subset 
\ILL(\mathsf{L}^2(X,\Id\mu))$ defines a strongly continuous contraction 
semigroup of self-adjoint operators and we denote the generator of 
$(P_t)_{t\geq  0}$ with $H_p$, that is, $H_p$ is the unique self-adjoint 
nonnegative operator in $\mathsf{L}^2(X,\Id\mu)$ with 
$\mathrm{e}^{-t H_p}=P_t$ for all $t\geq 0$. 

Next, we are going to consider perturbations of $H_p$. We start with the 
following central definition.

\begin{Definition}\label{van} A measurable function $h:X\to\IC$ is said to 
{\it vanish $\mu$-weakly at $\infty$}, if 
$$
\mu\{|h|\geq c\}<\infty\>\> \text{ for all $c>0$}.
$$
\end{Definition}

This measure theoretic notion of \lq\lq{}vanishing at 
infinity\rq\rq{} appears in \cite{lieb} for the Lebesgue measure in $\IR^m$, 
in the context of rearrangement inequalities for relativistic kinetic energies 
(see also \cite{villa} for an analogous context). It is also used in \cite{lenzi}, where the authors examine the question of emptyness of the essential spectrum of operators such as $H_p$ and perturbations thereof.\\
Let us denote the 
$\mu$-equivalence classes of functions that vanish $\mu$-weakly at $\infty$ 
with $\mathsf{L}_{\infty}(X,\Id\mu)$ and define
\begin{equation}
 \mathsf{L}^{\infty}_{\infty}(X,\Id\mu):=\mathsf{L}^{\infty}(X,\Id\mu)
 \cap \mathsf{L}_{\infty}(X,\Id\mu). 
\end{equation}
The following can be said about the structures of these spaces.

\begin{Proposition} $\mathsf{L}_{\infty}(X,\Id\mu)$ is an algebra, and 
 $\mathsf{L}^{\infty}_{\infty}(X,\Id\mu)$ is a Banach algebra with respect to 
 $\left\|\cdot\right\|_{\mu,\infty}$.
\end{Proposition}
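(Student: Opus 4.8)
The plan is to read off the algebraic closure properties directly from the defining level-set condition in Definition \ref{van}, and then to obtain the Banach algebra structure for $\mathsf{L}^{\infty}_{\infty}(X,\Id\mu)$ by exhibiting it as a \emph{closed subalgebra} of the Banach algebra $\mathsf{L}^{\infty}(X,\Id\mu)$. A preliminary remark is that the property of vanishing $\mu$-weakly at $\infty$ is well defined on $\mu$-equivalence classes: replacing $h$ by an a.e.-equal function alters each set $\{|h|\geq c\}$ only by a $\mu$-null set, so the finiteness of $\mu\{|h|\geq c\}$ is unaffected. Commutativity, associativity and distributivity are then inherited from the pointwise operations, so in both assertions only closure under the algebra operations (and, for the second, completeness) needs checking.

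First I would verify that $\mathsf{L}_{\infty}(X,\Id\mu)$ is an algebra using elementary set inclusions. For $\lambda\in\IC\setminus\{0\}$ one has $\{|\lambda h|\geq c\}=\{|h|\geq c/|\lambda|\}$, while the case $\lambda=0$ is trivial; for $f,g$ vanishing $\mu$-weakly at $\infty$ one has
$$
\{|f+g|\geq c\}\subseteq\{|f|\geq c/2\}\cup\{|g|\geq c/2\},\qquad
\{|fg|\geq c\}\subseteq\{|f|\geq\sqrt c\>\}\cup\{|g|\geq\sqrt c\>\},
$$
the latter because $|f|<\sqrt c$ and $|g|<\sqrt c$ force $|fg|<c$. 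In each case the right-hand side has finite $\mu$-measure, hence so does the left-hand side, which gives closure under scalar multiplication, addition and pointwise multiplication.

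Second, I would recall that $\mathsf{L}^{\infty}(X,\Id\mu)$ is a commutative unital Banach algebra under $\left\|\cdot\right\|_{\mu,\infty}$, with $\left\|fg\right\|_{\mu,\infty}\leq\left\|f\right\|_{\mu,\infty}\left\|g\right\|_{\mu,\infty}$. Since $\mathsf{L}^{\infty}_{\infty}(X,\Id\mu)=\mathsf{L}^{\infty}(X,\Id\mu)\cap\mathsf{L}_{\infty}(X,\Id\mu)$ is the intersection of $\mathsf{L}^{\infty}(X,\Id\mu)$ with the algebra $\mathsf{L}_{\infty}(X,\Id\mu)$ just treated, it is a subalgebra of $\mathsf{L}^{\infty}(X,\Id\mu)$ and thus carries the submultiplicative norm $\left\|\cdot\right\|_{\mu,\infty}$. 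It remains to show it is closed in $\mathsf{L}^{\infty}(X,\Id\mu)$, hence complete: if $h_n\in\mathsf{L}^{\infty}_{\infty}(X,\Id\mu)$ and $h_n\to h$ in $\left\|\cdot\right\|_{\mu,\infty}$, then for fixed $c>0$ choose $n$ with $\left\|h-h_n\right\|_{\mu,\infty}<c/2$, so $|h|\leq|h_n|+c/2$ $\mu$-a.e.\ and therefore $\{|h|\geq c\}\subseteq\{|h_n|\geq c/2\}$ up to a $\mu$-null set; the latter set has finite measure, so $h$ vanishes $\mu$-weakly at $\infty$ and $h\in\mathsf{L}^{\infty}_{\infty}(X,\Id\mu)$.

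I do not expect a genuine obstacle here; the only points requiring any care are using $\sqrt c$ rather than $c/2$ in the multiplication inclusion, and the stability of all conditions under passing to a.e.-equal representatives. One could add, though it is not needed for the statement, that $\mathsf{L}^{\infty}_{\infty}(X,\Id\mu)$ is closed under complex conjugation and hence is in fact a commutative $C^{*}$-algebra (non-unital unless $\mu(X)<\infty$).
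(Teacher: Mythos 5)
Your proof is correct and takes essentially the same approach as the paper: the paper also reduces the Banach-algebra claim to showing that $\mathsf{L}^{\infty}_{\infty}(X,\Id\mu)$ is closed in $\mathsf{L}^{\infty}(X,\Id\mu)$, and establishes that closedness via the same inclusion $\{|h|\geq c\}\subseteq\{|h_n|\geq c/2\}$ modulo a null set. You merely spell out the algebra closure inclusions (which the paper dismisses as the "obvious" part) in more detail, but the substance is identical.
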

\begin{proof} The only nonobvious assertion is that 
    $\mathsf{L}^{\infty}_{\infty}(X,\Id\mu)$ is a closed subspace of 
    $\mathsf{L}^{\infty}(X,\Id\mu)$. To see this, assume that 
    $(h_n)\subset \mathsf{L}^{\infty}_{\infty}(X,\Id\mu)$ is a sequence with 
    $h_n\to h$ as $n\to\infty$ in $\mathsf{L}^{\infty}(X,\Id\mu)$. Then there 
    is a set $N\subset X$ of measure 0 such that 
    $(h_{n})$ converges uniformly in $X-N$. Hence we can find for 
    $c>0$ a number $n(c)$ such that $|(h - h_{n})(x)| \le c/2$, for 
    $n\ge n(c)$ and
    all $x\in h^{-1}(x\ge c)\cap (X-N)$. Then for such $x$ we have
    $c/2\le |h_{n}(x)|$ and thus $h\in\mathsf{L}^{\infty}_{\infty}(X,\Id\mu)$,
    by the finiteness of $\mu(N)$.
\end{proof}

For any measurable $V:X\to\IC$, the corresponding (maximally defined) 
multiplication operator in $\mathsf{L}^2(X,\Id\mu)$ will be denoted by
$\hat{V}$ in the following, that is,
\[
\mathrm{Dom}(\hat{V})=\left.\big\{f\right|f\in\mathsf{L}^{2}(X,\Id\mu), 
Vf\in \mathsf{L}^{2}(X, \Id\mu)\big\},\>\>\hat{V}f(x):=V(x)f(x). 
\]
Clearly, $\hat{V}$ is always a normal operator in $\mathsf{L}^{2}(X,\mu)$ 
which only depends on the $\mu$-equivalence class of $V$, and which is 
self-adjoint if and only if $V$ is ($\mu$-a.e.) real-valued. 

\vspace{1.2mm}

\begin{Definition}\label{gaus} Given $q\geq 1$, a function $F_1 :X\to (0,\infty)$ is called an \emph{$\mathsf{L}^q$-control function for $p$}, if there exists a measurable function $F_2 :(0,\infty)\to (0,\infty)$ such that
\begin{align}
&p(t,x,x)\leq F_1(x)F_2(t)\text{ for all $t>0$, $x\in 
X$, and }\label{abdd}\\
&\int^{\infty}_{0}\mathrm{e}^{-t}\,F_2(t)^{1/(2q)}\,\Id t<\infty.\label{ak}
\end{align}
\end{Definition} 

\begin{Remark}
It is readily seen that the function $\tilde{F}_2(t) := \mathrm{const.}F_2(t)$ satisfies the same assumption as $F_2$. Hence for bounded $F_1$ one can assume that $F_1 \equiv 1$ without loss of generality. Furthermore, we will prove later that appropriate control functions exist on weighted Riemannian manifolds (cf. Theorem \ref{gri}) and weighted graphs, without further assumptions on the geometry (cf. inequality (\ref{dzzu})).
\end{Remark}

Now we can prove the following abstract result.        

\begin{Theorem} \label{main} Given $q\geq 1$ and an an $\mathsf{L}^q$-control function $F_1$ for $p$, assume that 
 $V:X\to\IR$ admits a decomposition $V=V_1+V_2$ such that 
 either  
\begin{align}\label{cond}
 q=1,\,\,V_1\in\mathsf{L}^{2}(X,F_1 \Id\mu),\,\,\,
 V_2\in\mathsf{L}_{\infty}(X,F_1\Id\mu)\cap \mathsf{L}^{\infty}_{\infty}(X,\Id\mu),
\end{align}
 or 
\begin{align}\label{cond2}
 q>1,\,\,\, \text{$F_1 \equiv 1$},\,\,\,V_1\in\mathsf{L}^{2q}(X,\Id\mu),\,\,\,
 V_2\in\mathsf{L}^{\infty}_{\infty}(X,\Id\mu).
\end{align}
Then for all $a>0$, $\hat{V}(H_p+a)^{-1}\in\ILL^{\infty}
(\mathsf{L}^2(X,\Id\mu))$. In particular, 
$\mathrm{Dom}(H_p+\hat{V})=\mathrm{Dom}(H_p)$ and 
$H_p+\hat{V}$ is self-adjoint and semibounded from below with 
$\sigma_{\mathrm{ess}}(H_p+\hat{V})= \sigma_{\mathrm{ess}}(H_p)$.
\end{Theorem}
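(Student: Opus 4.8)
The plan is to reduce the theorem to the single statement that $\hat{V}(H_p+a)^{-1}$ is a compact operator for every $a>0$, and then to obtain the remaining assertions from standard perturbation theory. Two preliminary reductions are convenient. First, since $p(\tau_2,x,\cdot)=P_{\tau_2-\tau_1}p(\tau_1,x,\cdot)$ pointwise for $\tau_1\le\tau_2$ (by \eqref{A1}--\eqref{A2}) and $P_{\tau_2-\tau_1}$ is an $\mathsf{L}^2$-contraction, the function $t\mapsto p(t,x,x)=\|p(t/2,x,\cdot)\|_{\mu,2}^2$ is non-increasing; replacing $F_2(t)$ by $\inf_{0<s\le t}F_2(s)$ (which still satisfies \eqref{abdd} and \eqref{ak}) we may therefore assume \emph{without loss of generality that $F_2$ is non-increasing}, which is what will make the $t$-integrals below converge for \emph{every} $a>0$ and not merely for large $a$. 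Second, it suffices to treat the two summands $V_1$ and $V_2$ of $V$ separately, via the Laplace transform $(H_p+a)^{-1}=\int_0^\infty\mathrm{e}^{-at}P_t\,\Id t$.

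For the singular part $V_1$ the key identity is $\int_X p(t,x,y)^2\,\Id\mu(y)=p(2t,x,x)\le F_1(x)F_2(2t)$, again a consequence of \eqref{A1}--\eqref{A2} (this is precisely why those were imposed pointwise). If $q=1$, then writing $\hat{V}_1\mathrm{e}^{-tH_p}=\big(\hat{V}_1\mathrm{e}^{-tH_p/2}\big)\mathrm{e}^{-tH_p/2}$ and using that $\ILL^2$ is a two-sided ideal one gets
\begin{align*}
\big\|\hat{V}_1\mathrm{e}^{-tH_p}\big\|_{\ILL^2}\le\big\|\hat{V}_1\mathrm{e}^{-tH_p/2}\big\|_{\ILL^2}=\Big(\int_X|V_1(x)|^2 p(t,x,x)\,\Id\mu(x)\Big)^{1/2}\le F_2(t)^{1/2}\,\|V_1\|_{\mathsf{L}^2(X,F_1\Id\mu)},
\end{align*}
and since $\int_0^\infty\mathrm{e}^{-at}F_2(t)^{1/2}\,\Id t<\infty$ (by \eqref{ak} and the monotonicity of $F_2$), we conclude that $\hat{V}_1(H_p+a)^{-1}=\int_0^\infty\mathrm{e}^{-at}\hat{V}_1\mathrm{e}^{-tH_p}\,\Id t\in\ILL^2$, in particular compact. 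If $q>1$ (so $F_1\equiv 1$), the same computation gives $\|\hat{V}_1\mathrm{e}^{-tH_p}\|_{\ILL^2}\le F_2(t)^{1/2}\|V_1\|_{\mu,2}$ for $V_1\in\mathsf{L}^2$, while trivially $\|\hat{V}_1\mathrm{e}^{-tH_p}\|_{\ILL(\mathsf{L}^2)}\le\|V_1\|_{\mu,\infty}$ for $V_1\in\mathsf{L}^\infty$; complex interpolation of the linear map $V_1\mapsto\hat{V}_1\mathrm{e}^{-tH_p}$ between the couples $(\mathsf{L}^\infty,\mathsf{L}^2)$ and $(\ILL(\mathsf{L}^2),\ILL^2)$ at $\theta=1/q$ then yields $\|\hat{V}_1\mathrm{e}^{-tH_p}\|_{\ILL^{2q}}\le F_2(t)^{1/(2q)}\|V_1\|_{\mu,2q}$, and integrating against $\mathrm{e}^{-at}\,\Id t$ and using \eqref{ak} for this $q$ gives $\hat{V}_1(H_p+a)^{-1}\in\ILL^{2q}$, again compact.

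For the part $V_2$ I would use a truncation argument exploiting the vanishing-at-infinity hypotheses. Fix $\epsilon>0$ and split $V_2=V_2\,1_{\{|V_2|\le\epsilon\}}+V_2\,1_{\{|V_2|>\epsilon\}}$. The first term gives an operator with $\|\widehat{V_2\,1_{\{|V_2|\le\epsilon\}}}(H_p+a)^{-1}\|_{\ILL(\mathsf{L}^2)}\le\epsilon\,\|(H_p+a)^{-1}\|\le\epsilon/a$. The second term is bounded (by $\|V_2\|_{\mu,\infty}$) and supported on $\{|V_2|>\epsilon\}$, a set of finite $\mu$-measure (since $V_2\in\mathsf{L}^\infty_\infty(X,\Id\mu)$) and, when $q=1$, also of finite $F_1\Id\mu$-measure (since $V_2\in\mathsf{L}_\infty(X,F_1\Id\mu)$); hence $V_2\,1_{\{|V_2|>\epsilon\}}$ lies in $\mathsf{L}^2(X,F_1\Id\mu)$ if $q=1$, resp. in $\mathsf{L}^{2q}(X,\Id\mu)$ if $q>1$, and is therefore covered by the previous paragraph, so $\widehat{V_2\,1_{\{|V_2|>\epsilon\}}}(H_p+a)^{-1}$ is compact. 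Letting $\epsilon\to 0$ exhibits $\hat{V}_2(H_p+a)^{-1}$ as an operator-norm limit of compact operators, hence compact; consequently $\hat{V}(H_p+a)^{-1}=\hat{V}_1(H_p+a)^{-1}+\hat{V}_2(H_p+a)^{-1}$ is compact for every $a>0$, and in particular $\mathrm{Dom}(H_p)=\mathrm{ran}((H_p+a)^{-1})\subseteq\mathrm{Dom}(\hat{V})$.

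It remains to derive the concluding assertions. From the factorization $\hat{V}(H_p+a)^{-1}=\big(\hat{V}(H_p+1)^{-1}\big)\big((H_p+1)(H_p+a)^{-1}\big)$, the compactness of $\hat{V}(H_p+1)^{-1}$, and the fact that the self-adjoint operators $(H_p+1)(H_p+a)^{-1}$ tend to $0$ strongly as $a\to\infty$, one obtains $\|\hat{V}(H_p+a)^{-1}\|\to 0$; equivalently, $\hat{V}$ is $H_p$-bounded with relative bound $0$. Since $V$ is real-valued, $\hat{V}$ is self-adjoint, so the Kato--Rellich theorem yields that $H_p+\hat{V}$ is self-adjoint on $\mathrm{Dom}(H_p)$ and bounded below; and since $\hat{V}$ is $H_p$-compact, $\sigma_{\mathrm{ess}}(H_p+\hat{V})=\sigma_{\mathrm{ess}}(H_p)$ by the stability of the essential spectrum under relatively compact perturbations (cf.\ \cite{weid}). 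The step I expect to be the main obstacle is the case $q>1$: one must make the interpolation argument fully rigorous — in particular, that $V_1\mapsto\hat{V}_1\mathrm{e}^{-tH_p}$ is a well-defined bounded map into $\ILL^{2q}$ and that $t\mapsto\mathrm{e}^{-at}\hat{V}_1\mathrm{e}^{-tH_p}$ is Schatten-norm integrable — while keeping the exponential weights under control; the reduction to a non-increasing $F_2$ is the device that disposes of the latter difficulty uniformly in $a>0$.
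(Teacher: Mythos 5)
Your proposal is correct and reaches the same conclusions, but for the case $q>1$ it follows a genuinely different route from the paper, which is worth spelling out. The paper handles $q>1$ in three stages: an $\mathsf{L}^2\to\mathsf{L}^2$ operator-norm bound on $\hat{W}P_t$ via H\"older (their Lemma~\ref{ddo} and Step 2--3), a separate truncation step producing Hilbert--Schmidt approximants $\hat{W}_n(H_p+1)^{-1}$ (Step 4), and then operator-norm convergence $\hat{W}_n(H_p+1)^{-1}\to\hat{W}(H_p+1)^{-1}$ to conclude compactness (Step 5). You instead apply complex interpolation between the couples $(\mathsf{L}^\infty,\mathsf{L}^2)\to(\ILL(\mathsf{L}^2),\ILL^2)$ at $\theta=1/q$ to land directly in the Schatten ideal $\ILL^{2q}$. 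This is sharper than the paper's conclusion (genuine Schatten-$2q$ membership, not just compactness) and collapses their Steps 2--5 into one move; the price is that the interpolation and the Bochner integrability of $t\mapsto\mathrm{e}^{-at}\hat{V}_1\mathrm{e}^{-tH_p}$ in $\ILL^{2q}$ need more care, as you yourself flag. A clean way to close that gap is to interpolate at the level of the resolvent rather than the semigroup: establish $\|\hat{W}(H_p+a)^{-1}\|_{\ILL^2}\le C(a)\|W\|_{\mu,2}$ by a direct kernel computation (as in the paper's Step 1, no Bochner integral needed) and $\|\hat{W}(H_p+a)^{-1}\|_{\ILL}\le a^{-1}\|W\|_{\mu,\infty}$, and interpolate these two bounds.

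Two further small departures from the paper: you reduce to non-increasing $F_2$ to make the Laplace-transform integrals converge for every $a>0$, while the paper handles this instead by the first resolvent identity ($A(H_p+1)^{-1}\in\ILL^r\Leftrightarrow A(H_p+a)^{-1}\in\ILL^r$ for all $a>0$); both are fine, and your monotonicity observation (that $t\mapsto p(t,x,x)=\|p(t/2,x,\cdot)\|_{\mu,2}^2$ is non-increasing, so $\inf_{0<s\le t}F_2(s)$ is again an admissible control) is correct. Also, you split $\hat{V}$ into $\hat{V}_1$ plus $\hat{V}_2$ and treat the two summands separately, whereas the paper works with $V_1+1_{\{|V_2|\ge 1/n\}}V_2$ as one function; this is cosmetic. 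Your derivation of the final perturbation-theoretic statements (relative bound zero via $\hat{V}(H_p+a)^{-1}=\hat{V}(H_p+1)^{-1}\cdot(H_p+1)(H_p+a)^{-1}$ with the self-adjoint factor $\to0$ strongly, Kato--Rellich, stability of the essential spectrum) is a correct expansion of what the paper leaves to the Weidmann reference.
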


\begin{Remark} The compactness of $\hat{V}(H_p+a)^{-1}$ also 
implies that $H_p$ and $H_p+\hat{V}$ have the same singular sequences and 
that any operator core for $H_p$ is also an operator core for 
$H_p+\hat{V}$.
\end{Remark}

For the proof of Theorem \ref{main}, we will need the following 
estimate. 

\begin{Lemma}\label{ddo} Assume that there exist $t>0$ and a 
measurable set $U\subset X$ such that 
$$
C_U(t):=\sup_{x\in U,y\in X}p(t,x,y)<\infty.
$$
Then for every $\alpha > 2$ one has
$$
\left\|\widehat{1_{U}} P_t\right\|_{\mu,2\to\alpha}
\leq C_{U}(t)^{\f{\alpha-2}{2\alpha}}<\infty.
$$
\end{Lemma}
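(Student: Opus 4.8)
The plan is to interpolate between an $\mathsf{L}^2 \to \mathsf{L}^2$ bound and an $\mathsf{L}^1 \to \mathsf{L}^\infty$ bound for the operator $\widehat{1_U} P_t$, using the Riesz--Thorin theorem. First I would record the two endpoint estimates. On the one hand, since $\|P_t\|_{\mu,2\to 2} \leq 1$ by \eqref{contr} and multiplication by $1_U$ has norm $\leq 1$, we get $\|\widehat{1_U} P_t\|_{\mu,2\to 2}\leq 1$. On the other hand, for $f\in \mathsf{L}^1(X,\Id\mu)$ we have, for $x\in U$,
\[
|\widehat{1_U} P_t f(x)| = \Big|\int_X p(t,x,y) f(y)\,\Id\mu(y)\Big| \leq \Big(\sup_{x\in U,\,y\in X} p(t,x,y)\Big)\|f\|_{\mu,1} = C_U(t)\|f\|_{\mu,1},
\]
and for $x\notin U$ the left-hand side is $0$; hence $\|\widehat{1_U} P_t\|_{\mu,1\to\infty}\leq C_U(t)$.

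Next I would apply the Riesz--Thorin interpolation theorem to these two bounds. Writing the target exponent as $\alpha\in(2,\infty)$, I pick the interpolation parameter $\theta\in(0,1)$ so that the pair $(2,2)$ and $(1,\infty)$ interpolate to $(\tfrac12\cdot\theta + \tfrac12\cdot(1-\theta))^{-1}$ on the domain side and the analogous combination on the target side. A short computation gives $\theta = 2/\alpha$ for the target exponent $\alpha$, which simultaneously maps the domain exponent to $(\,(1-\theta)/2 + \theta\,)^{-1} = (\tfrac12 + \tfrac{1}{\alpha})^{-1}$; since $\alpha > 2$ this domain exponent is strictly less than $2$, so the bound we obtain is a priori on $\mathsf{L}^{(\,1/2+1/\alpha\,)^{-1}}\to\mathsf{L}^\alpha$, which by monotonicity of $\mathsf{L}^p$-norms in a finite-measure situation is not quite what is wanted — so I should instead interpolate between the pairs $(2,2)$ and $(2,\infty)$, not $(1,\infty)$.

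Therefore the correct endpoints to interpolate are: $\|\widehat{1_U} P_t\|_{\mu,2\to 2}\leq 1$ and $\|\widehat{1_U} P_t\|_{\mu,2\to\infty}\leq C_U(t)^{1/2}$, the latter coming from Cauchy--Schwarz: for $x\in U$, $|P_t f(x)|\leq \|p(t,x,\cdot)\|_{\mu,2}\|f\|_{\mu,2}$ and $\|p(t,x,\cdot)\|_{\mu,2}^2 = p(2t,x,x)\leq C_U(t)\int_X p(t,x,y)\Id\mu(y)\leq C_U(t)$ using \eqref{A1}, \eqref{A2}, \eqref{A3} together with the bound on $U$. Keeping the domain fixed at $\mathsf{L}^2$ and interpolating the target between $\mathsf{L}^2$ and $\mathsf{L}^\infty$ with parameter $\theta$ determined by $\tfrac1\alpha = \tfrac{1-\theta}{2}$, i.e. $\theta = 1 - \tfrac2\alpha = \tfrac{\alpha-2}{\alpha}$, Riesz--Thorin yields
\[
\|\widehat{1_U} P_t\|_{\mu,2\to\alpha} \leq 1^{1-\theta}\cdot \big(C_U(t)^{1/2}\big)^{\theta} = C_U(t)^{(\alpha-2)/(2\alpha)},
\]
which is the claimed estimate; finiteness is immediate from $C_U(t)<\infty$.

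The main obstacle is making sure the $\mathsf{L}^2\to\mathsf{L}^\infty$ endpoint bound is genuinely pointwise-valid and sup-attained rather than merely essentially bounded — this is exactly the place where the \emph{pointwise} (not merely $\mu_{\otimes_2}$-a.e.) consistency of $p$ built into Definition \ref{heat} is used, so that $p(2t,x,x) = \int_X p(t,x,y)p(t,y,x)\,\Id\mu(y)$ holds for every $x$, and so that $\sup_{x\in U}$ rather than $\mu$-$\operatorname{ess\,sup}_{x\in U}$ controls the operator. Everything else is a routine application of Riesz--Thorin and Cauchy--Schwarz.
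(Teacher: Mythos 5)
Your proof is correct, but it takes a genuinely different route from the paper. The paper obtains the bound by a single application of the three-factor H\"older inequality, writing
\[
p(t,x,y)\,|f(y)| \;=\; \bigl(p(t,x,y)|f(y)|^2\bigr)^{1/\alpha}\cdot p(t,x,y)^{1-1/\alpha}\cdot |f(y)|^{1-2/\alpha}
\]
and applying H\"older with exponents $\alpha$, $2$, $\tfrac{2\alpha}{\alpha-2}$ inside the $y$-integral, then integrating over $U$ and using \eqref{A3} to bound $\int_X p(t,x,y)^{2(1-1/\alpha)}\,\Id\mu(y)$. Your argument instead isolates the two endpoint estimates $\|\widehat{1_U}P_t\|_{\mu,2\to 2}\le 1$ and $\|\widehat{1_U}P_t\|_{\mu,2\to\infty}\le C_U(t)^{1/2}$ (the latter via Cauchy--Schwarz, the semigroup property \eqref{A1}, symmetry \eqref{A2}, and \eqref{A3}), and then invokes Riesz--Thorin with $\theta=(\alpha-2)/\alpha$. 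The two approaches yield the identical constant $C_U(t)^{(\alpha-2)/(2\alpha)}$; yours is more conceptual and makes the interpolation structure explicit, while the paper's is more elementary and self-contained, bypassing interpolation theory entirely. One minor stylistic point: you should excise the false start with the $(1,\infty)$ endpoint from the final write-up and present only the $(2,2)$/$(2,\infty)$ interpolation. Your closing remark about the role of the pointwise (rather than a.e.) consistency in Definition \ref{heat} is exactly the right thing to flag: the identity $p(2t,x,x)=\int_X p(t,x,y)^2\,\Id\mu(y)$ is needed for every $x\in U$, and the supremum defining $C_U(t)$ must control the operator pointwise.
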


\begin{proof} 
Let $f\in\IL^2(X,\Id\mu)$, then from (\ref{A3}) we see that
\begin{equation}
\|f\|_{\mu,2}^{2}\ge\int_{U\times X}p(t,x,y)|f(y)|^2 {\rm d}\mu(x) 
{\rm d}\mu(x),
\end{equation}
which suggests the following application of Hölder's inequality with exponents 
\[
p_1=\alpha,\>\>p_2 = 2,\>\>p_3=\f{2\alpha}{\alpha-2}.
\]
We estimate 
\begin{align}
&\left\|\widehat{1_{U}}P_t f\right\|^{\alpha}_{\mu,\alpha}
\leq 
\int_U \left(\int_X p(t,x,y)|f(y)| \Id \mu(y)\right)^{\alpha} \Id \mu(x)\\
&= \int_U\left( \int_X \left(  p(t,x,y)|f(y)|^{2}\right)^{\f{1}
{\alpha}}p(t,x,y)^{1-\f{1}{\alpha}}|f(y)|^{1-\f{2}{\alpha}}    
\Id\mu(y)\right)^{\alpha} \Id\mu(x)\nn\\
&\leq \int_U \left(\int_X p(t,x,y) |f(y)|^{2} \Id\mu(y) \right) 
\left(\int_X p(t,x,y)^{2(1-\f{1}{\alpha})}\Id\mu(y)  
\right)^{\alpha/2}\nn\\
&\>\>\>\>\times 
\left( \int_X |f(y)|^{2}\Id\mu(y) \right)^{\f{1}{2}
\left({\alpha}-2 \right) } 
\Id\mu(x),\nn
\end{align}
so that we get from (18) and the definition 
of $C_U(t)$ 
\begin{align*}
\left\|\widehat{1_{U}} P_tf\right\|^{\alpha}_{\mu,\alpha}\leq  \left\|f\right\|^{2 }_{\mu,2} 
C_U(t)^{\f{1}{2}\left(\alpha-2 \right)}
\left\|f\right\|^{\alpha-2}_{\mu,2} =    C_U(t)^{\f{1}{2}\left( \alpha-2\right)}
\left\|f\right\|^{\alpha }_{\mu,2},
\end{align*}
which completes the proof.
\end{proof}
\begin{proof}[Proof of Theorem \ref{main}] Pick a function $F_2$ as in Definition \ref{gaus}. We are going to prove 
$$
\hat{V}(H_p+1)^{-1}\in\ILL^{\infty}(\mathsf{L}^2(X,\Id\mu)), 
$$
so that all assertions will follow from well-known abstract perturbation theory 
results on linear operators (see for example Theorem 9.1 and Theorem 9.9 in 
\cite{weid}). We will freely use the fact that for every operator $A$ 
in $\mathsf{L}^2(X,\Id\mu)$ and every $r\in [1,\infty]$, one has
$$
A(H_p+1)^{-1}\in\ILL^{r}(\mathsf{L}^2(X,\Id\mu))\Leftrightarrow 
A(H_p+a)^{-1}\in\ILL^{r}(\mathsf{L}^2(X,\Id\mu))\text{ for all $a>0$},
$$
which follows from the first resolvent identity.\vspace{1mm}

{\it Step 1: If $q=1$, then one has $\hat{W}(H_p+a)^{-1}\in 
\ILL^2(\mathsf{L}^2(X,\Id\mu))$ for all $a\geq 2$, and all 
$W\in \IL^2(X,F_1\Id\mu)$.}\vspace{2mm}

Proof: For all $t>0$, with (\ref{A1}) and (\ref{A3}) we can estimate
the Hilbert-Schmidt norm of $\hat{W}P_t$ as follows:
\begin{align*}
&\left\|\hat{W}P_t\right\|_{\mu, \mathrm{HS}}^2=\int_X|W(x)|^2
\int_X  p(t,x,y)^2 \Id\mu(y)\Id\mu(x)\\
&=\int_X|W(x)|^2  p(2t,x,x)\Id\mu(x)\\
&\leq \int_X|W(x)|^2 F_1(x)F_2(2t)\Id\mu(x) \\
&= F_2(2t)\left\|W\right\|_{\mu_{F_1}, 2}^2.
\end{align*}
Taking the Laplace transform, we have for $a\geq 2$ 
\begin{align*}
&\left\|\hat{W}(H_p+a)^{-1}\right\|_{\mu, \mathrm{HS}}= 
\left\|\hat{W}\int^{\infty}_0\mathrm{e}^{-at}P_t\Id t\right\|_{\mu, \mathrm{HS}}\\
&\leq\left\|W\right\|_{\mu_{F_1}, 2}
\int^{\infty}_0 F_2(2t)^{1/2}\mathrm{e}^{-at}\Id t<\infty.
\end{align*}

{\it Step 2: If $q>1$ and $F_1\equiv 1$, then for any 
$W\in\mathsf{L}^{2q}(X,\Id\mu)$, $t>0$, it holds that 
$\hat{W}P_t\in \ILL(\mathsf{L}^2(X,\Id\mu))$ with}
$$
\left\|\hat{W}P_t\right\|_{\mu, 2\to 2}\leq  F_2(t)^{\f{1}{2q}}
\left\|W\right\|_{\mu, 2q}<\infty.
$$

Proof: Using (\ref{A1}), (\ref{A2}), and Cauchy-Schwarz we get
$$
C_X(t):=\sup_{x,y\in X}\int p(t/2,x,z)p(t/2,z,y)\Id\mu(z)
\leq \sup_{x\in X}p(t,x,x)\leq  F_2(t)<\infty.
$$

Now let $f \in \IL^2(X,\Id\mu)$.  We use Hölder\rq{}s inequality with 
$1/q+1/q^*=1$ and Lemma \ref{ddo} with $U=X$ and 
$\alpha= 2q^{*}>2$ to get 
\begin{align*}
&\left\|\hat{W}P_tf\right\|_{\mu, 2}^2= 
\int_{X} |W(x)|^2  |P_t f(x)|^2\Id\mu(x)\\
&\leq \left(\int_{X} |W(x)|^{2q}\Id\mu(x)\right)^{\f{1}{q}}
\left(\int_{X} |P_t f(x)|^{2q^*}\Id\mu(x)\right)^{\f{1}{q^*}}\\
&\leq \left\|W\right\|_{\mu, 2q}^2 
\left\|P_t f\right\| ^{2}_{\mu, 2q^*}
\leq  \left\|W\right\|_{\mu, 2q}^2 
\left\|P_t \right\| ^{2}_{\mu, 2\to 2q^*}
\left\|f\right\| ^{2}_{\mu, 2}\leq C_X(t)^{\f{1}{q}}
\left\|W\right\|_{\mu, 2q}^2 \left\|f\right\| ^{2}_{\mu, 2}\\
&\leq  F_2(t)^{\f{1}{q}}\left\|W\right\|_{\mu, 2q}^2 
\left\|f\right\| ^{2}_{\mu, 2}.
\end{align*}

\vspace{2mm}

{\it Step 3: If $q>1$ and $F_1\equiv 1$, then for any 
$W\in\mathsf{L}^{2q}(X,\Id\mu)$, and all $a\geq 1$ it holds that 
$\hat{W}(H_p+a)^{-1}\in \ILL(\mathsf{L}^2(X,\Id\mu))$ with}
$$
\left\|\hat{W}(H_p+a)^{-1}\right\|_{\mu, 2\to 2}\leq  
\left\|W\right\|_{\mu, 2q}\int^{\infty}_0F_2(t)^{\f{1}{2q}} 
\mathrm{e}^{-at}\Id t<\infty.
$$

Proof: From the previous step we get, taking the Laplace transform,
\begin{align*}
&\left\|\hat{W}(H_p+a)^{-1}\right\|_{\mu, 2\to 2}= 
\left\|\hat{W}\int^{\infty}_0\mathrm{e}^{-at}P_t
\Id t\right\|_{\mu, 2\to 2}
\leq\int^{\infty}_0 \left\|\hat{W}P_t\right\|_{\mu, 2\to 2}
\mathrm{e}^{-at}\Id t\\
& \leq   \left\|W\right\|_{\mu, 2q}\int^{\infty}_0F_2(t)^{\f{1}{2q}} 
 \mathrm{e}^{-at}\Id t.
\end{align*}

\emph{Step 4: Let $F_1\equiv 1$, let $W:X\to\IR$ be measurable, 
and set $W_n:=1_{X_n}\min(n,W)$, $n\in\IN$, where $(X_n)$ is an 
exhaustion of $X$ with $\mu(X_n)<\infty$ (remember that $(X,\mu)$ 
is sigma-finite). Then for all $n$ one has 
$\hat{W_n}(H_p+1)^{-1}\in\ILL^{\infty}(\mathsf{L}^{2}(X,\Id\mu))$.}\vspace{2mm}

Proof: Indeed, as $\hat{W_n}$ is bounded, one has the equivalence (cf. \cite{lenzi}, Theorem 1.3)
$$
\hat{W_n}(H_p+1)^{-1}\in \ILL^{\infty}(\mathsf{L}^{2}(X,\Id\mu)) \Leftrightarrow \hat{W_n}P_t\in \ILL^{\infty}(\mathsf{L}^{2}(X,\Id\mu)), 
$$
for all $t>0$. But for any $t>0, \hat{W_n}P_t$ is Hilbert-Schmidt:
\begin{align*}
&\int_{X\times X}|W_n(x)|^2p(t,x,y)^2 
\Id\mu (x)\Id\mu (y)\\
&=\int_X |W_n(x)|^2p(2t,x,x) \Id\mu(x)=\int_{X_n} |W_n(x)|^2p(2t,x,x) \Id\mu(x)\\
&\leq n^2 \mu(X_n) \sup_{x\in X}p(2t,x,x)
\leq n^2 \mu(X_n) F_2(2t)<\infty,
\end{align*}
where we have used (\ref{A1}) once more. \vspace{2mm}

{\it Step 5: For any\footnote{Note here that in case $F_1\equiv 1$, and $q=1$, the conditions (\ref{cond2}) and (\ref{cond}) are equivalent, so that we can treat both cases $q=1$ and $q>1$ simultaneously from here on.} $W\in\mathsf{L}^{2q}(X,F_1\Id\mu)$ it holds that $\hat{W}(H_p+1)^{-1}\in \ILL^{\infty}(\mathsf{L}^{2}(X,\Id\mu))$.}\vspace{2mm}

Proof: In case $q=1$ this follows from step 1. In case $q>1$ and $F_1\equiv 1$, we use that limits in operator norm of compact operators are compact as well, this follows from Step 3 and Step 4: As $n\to\infty$, we get from these results and dominated convergence that 
\begin{align*}
 \left\|\hat{W}(H_p+1)^{-1}-\hat{W_n}(H_p+1)^{-1} \right\|_{\mu,2\to 2}\leq C\left\|W-W_n\right\|_{\mu, 2q}\to 0,
\end{align*}
as claimed.\vspace{2mm}

{\it Step 6: With $Y_n:=\{|V_2|\geq 1/n\}$ one has } 
\[
(\widehat{V_1}+\widehat{1_{Y_n} V_2})(H_p+1)^{-1}\in \ILL^{\infty}(\mathsf{L}^2(X,F_1\Id\mu))\>\>\text{ for any $n\in\IN$}.
\]

Proof: Clearly $V_2\in \mathsf{L}^{\infty}(X,\Id\mu)\cap \mathsf{L}_{\infty}(X,F_1\Id\mu)$ implies 
$$
V_2\in \mathsf{L}^{\infty}_{\infty}(X,F_1\Id\mu),
$$
so that we have
$$
1_{Y_n} V_2 \in \mathsf{L}^{2q}(X,F_1\Id\mu)\text{,  $V_1+1_{Y_n} V_2\in \mathsf{L}^{2q}(X,F_1\Id\mu)$} 
$$
and the assertion follows from step 3. \vspace{1.2mm}

{\it Step 7: One has $\widehat{V}(H_p+1)^{-1}\in\ILL^{\infty}(\mathsf{L}^2(X,\Id\mu))$.}

Proof: Clearly, $\widehat{V}(H_p+1)^{-1}$ is in $\ILL(\mathsf{L}^2(X,\Id\mu))$ by the previous considerations. Using that the limits in operator norm of compact operators are compact, it is sufficient to prove 
\[
 \left\|\widehat{V}(H_p+1)^{-1}-(\widehat{V_1}+\widehat{1_{Y_n} V_2})(H_p+1)^{-1} \right\|_{\mu,2\to 2}\to 0\>\>\text{as $n\to\infty$,}
\]
which follows, if we can show that $\|\widehat{V_2}-\widehat{1_{Y_n} V_2} \|_{\mu,2\to 2}\to 0$ as $n\to\infty$. To see the latter convergence, just note that for any $f\in \mathsf{L}^2(X,\Id\mu)$ one has 
\begin{align}
& \| (\widehat{V_2}-\widehat{1_{Y_n} V_2}) f \|_{\mu,2}\leq \|V_2-1_{Y_n} V_2\|_{\mu,\infty} \left\|f\right\|_{\mu,2},\>\text{ so that}\nn\\
&\| \widehat{V_2}-\widehat{1_{Y_n} V_2} \|_{\mu,2}\leq \|V_2-1_{Y_n} V_2\|_{\mu,\infty},\nn
\end{align}
and this tends to $0$ as $n\to\infty$, which follows readily from $V_2\in\mathsf{L}_{\infty}(X,\Id\mu)$. This completes the proof. 
\end{proof}

In the rest of this section, we are going to extend the above result to a class of operators that act on sections in certain finite dimensional vector bundles over $X$ (this is the content of Corollary \ref{mag1} below). 

\begin{Definition}\label{vb} 1. Let $E$ and $Y$ be measurable spaces. A surjective measurable map $\pi:E\to Y$ is called a \emph{(complex) measurable vector bundle over $Y$ with rank $d\in\IN$}, if for any $y\in Y$, the set $\pi^{-1}\{y\}$ is a complex linear space and there is a measurable set $U\subset Y$ with $y\in U$ and a measurable bijection 
$$
\phi:\pi^{-1}(U)\longrightarrow U \times \IC^d\>\text{ with $\phi^{-1}$ also measurable,}
$$
called a \emph{vector bundle chart with domain $U$} for $\pi: E\to Y$, with the following properties: The diagram
\begin{align}
\xymatrix{
\pi^{-1}(U) \ar[d]_{\pi}   \ar[r]^{\phi}  &  U\times \IC^d \ar[dl]^{\mathrm{pr}_1}\\
Y    
}\label{dia}
\end{align} 
commutes and $\phi^{-1}(y,\cdot):\IC^d\to \pi^{-1}\{y\}$ is an isomorphism of complex linear spaces.

2. A measurable vector bundle $\pi: E\to Y$ is called \emph{countably generated}, if the there is an $\IN$-indexed cover $Y=\bigcup_{n\in\IN} U_n$ with measurable sets $U_n\subset Y$ such that each $U_n$ is the domain of a vector bundle chart for $\pi: E\to Y$. 
\end{Definition}

\begin{Remark} 1. As usual, for any measurable space $Y$ one gets the canonical measurable vector bundles $\mathrm{pr_1}:Y\times\IC^d\to Y$ with rank $d$, in particular, $d=1$ corresponds to the scalar situation of Theorem \ref{main}.

2. In typical applications, the space $Y$ has some additional structure such that in fact all measurable vector bundles of interest over $Y$ are countably generated: For example, if $Y$ is a topological space which is Lindelöf (e.g., $Y$ could be metrizable and second countable), then any continuous vector bundle $\pi: E\to Y$ is a measurable one in the sense of Definition \ref{vb} (if $E$ and $Y$ are equipped with their Borel-sigma-algebras), which is automatically countably generated.
\end{Remark}

Let $\pi: E\to Y$ be a measurable vector bundle with rank $d$ in the sequel. Writing $E_y:=\pi^{-1}\{y\}$ for the fibers of $\pi$ will cause no danger of confusion, where then accordingly we can and will simply write $E\to Y$ instead of $\pi: E\to Y$. \\
Analogously to the smooth case, using any sufficiently regular multifunctor\footnote{To be precise, we mean multifunctors that are Borel measurable with respect to the canonical topology on finite dimensional linear spaces} of complex linear spaces, we can construct new measurable vector bundles from old ones, for example, the set of endomorphisms on $E$ given by
\begin{align}
\mathrm{End}(E):=\bigsqcup_{y\in Y}\mathrm{End}(E_y)\longrightarrow Y\label{dsaa}
\end{align}
becomes a measurable vector bundle with rank $d^2$ in a canonical way: Any vector bundle chart $\bigsqcup_{y\in U}E_y\to U \times \IC^d$ induces an obvious map
\begin{align}
\bigsqcup_{y\in U}\mathrm{End}(E_y)\longrightarrow  U \times \mathrm{End}(\IC^{d})=U \times \IC^{d^2},\label{asa}
\end{align}
and as $U$ runs through a measurable cover of $Y$, the collection (\ref{asa}) determines a sigma-algebra on $\mathrm{End}(E)$, and, a posteriori, (\ref{dsaa}) indeed becomes a measurable vector bundle with rank $d\times d$, where the vector bundle charts are then given by the maps (\ref{asa}).\vspace{1.2mm}

Given a set $U\subset Y$, we will denote the measurable sections in $E\to Y$ over $U$ with $\Gamma(U,E)$, that is, $\Gamma(U,E)$ is the set of measurable maps $\psi:U\to E$ that satisfy $\pi\circ\psi =\mathrm{1}_U$.

\begin{Definition}\label{frame} 1. Let $U\subset Y$. Then a collection $e_1,\dots,e_d\in \Gamma(U,E)$ is called a \emph{measurable frame} for $E\to Y$ over $U$, if the vectors $e_1(y),\dots,e_d(y)$ form a basis of $E_y$ for all $y\in U$.  

2. A \emph{measurable Hermitian structure} for $E\to Y$ is given by a collection of complex scalar products $(\cdot,\cdot)_y:E_y\times E_y\to \IC$, $y\in Y$, with the property that for any fixed $\psi_1,\psi_2\in \Gamma_{}(Y,E)$, the function $y\mapsto (\psi_1(y),\psi_2(y))_y$ is measurable. Then the pair given by $E\to Y$ and $(\cdot,\cdot)_{\cdot}$ is called a \emph{measurable Hermitian vector bundle}, and for $U\subset Y$, a collection $e_1,\dots,e_d\in \Gamma_{}(U,E)$ is called a \emph{measurable orthonormal frame} for the latter pair over $U$, if one has 
$$
(e_i(y),e_j(y))_y=\begin{cases}1, &\text{if $i=j$}\\0, &\text{else}\end{cases}\ \text{ for all $y\in U$ and $i,j=1,\dots, d$.} 
$$
\end{Definition}

In what follows, there will be no danger of confusion in ommitting the dependence of measurable Hermitian structures on their underlying vector bundles (as we have done in Definition \ref{frame}.2). Given such a structure $(\cdot,\cdot)_{\cdot}$ on $E\to Y$, we denote with $|\cdot|_y:=\sqrt{(\cdot,\cdot)_y}$ the corresponding norm on $E_y$, and also the operator norm on $\mathrm{End}(E_y)$. Then we can safely continue to denote such a measurable Hermitian vector bundle with $E\to Y$. \vspace{1.2mm}

Let us now explain how one can use the above considerations in order to define bundle-valued $\mathsf{L}^{q}$-type Banach spaces with respect $(X,\Id\mu)$ for all $q\in [1,\infty]$: To this end, if now $E\to X$ is a measurable Hermitian vector bundle with rank $d$, and if $q\in [1,\infty)$, then $\Gamma_{\mathsf{L}^{q}}(X,E;\Id\mu)$ denotes the complex normed space of $\mu$-equivalence classes of measurable sections $f$ in $E\to X$ over $X$ with norm
$$
\left\|f\right\|_{\mu,q}=\left(\int_X |f(x)|^{q}_x \Id\mu( x)\right)^{\f{1}{q}}<\infty,
$$ 
and the norm $\left\|f\right\|_{\mu,\infty}$ on the space $\Gamma_{\mathsf{L}^{\infty}}(X,E;\Id\mu)$ is defined as the infimum of all $C>0$ such that $|f(x)|_x\leq C$ for $\mu$-a.e. $x\in X$. Using the same proof as for functions, one sees that all the $\Gamma_{\mathsf{L}^{q}}(X,E;\Id\mu)$\rq{}s are in fact complex Banach spaces, in particular $\Gamma_{\mathsf{L}^{2}}(X,E;\Id\mu)$ becomes a complex Hilbert space in the obvious way. 

The following result generalizes a result by D. Pitt \cite{pitt} to vector bundles.

\begin{Theorem} \label{pits} Let $E\to X$ be a measurable Hermitian countably generated vector bundle with rank $d$, let $(X\rq{}, \mu\rq{})$ be another sigma-finite measure space with $E\rq{}\to X\rq{}$ a measurable Hermitian countably generated vector bundle with rank $d\rq{}$. Assume that for some $q\rq{}\in (1,\infty)$, $q\in [1,\infty]$ we are given operators 
\[
A\in\ILL\big(\mathsf{L}^{q\rq{}}(X\rq{},\Id\mu\rq{}),\mathsf{L}^{q}(X,\Id\mu)\big), \>B\in\ILL\big(\Gamma_{\mathsf{L}^{q\rq{}}}(X\rq{},E\rq{};\Id\mu\rq{}),\Gamma_{\mathsf{L}^{q}}(X,E;\Id\mu)\big) 
\]
with the property that $A$ is positivity preserving and that for arbitrary $f\in\Gamma_{\mathsf{L}^{q\rq{}}}(X\rq{},E\rq{};\Id\mu\rq{})$ one has $|Bf(x)|_x\leq A|f|(x)$ for $\mu$-a.e. $x\in X$. Then the implication
\begin{align}
A\in\ILL^{\infty}\big(\mathsf{L}^{q\rq{}}(X\rq{},\Id\mu\rq{}),\mathsf{L}^{q}(X,\Id\mu)\big) \Rightarrow B\in\ILL^{\infty}\big(\Gamma_{\mathsf{L}^{q\rq{}}}(X\rq{},E\rq{};\Id\mu\rq{}),\Gamma_{\mathsf{L}^{q}}(X,E;\Id\mu)\big)\nn
\end{align}
holds true.
\end{Theorem}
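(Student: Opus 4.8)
The plan is to reduce the vector bundle statement to the scalar case via a partition-of-unity argument along a countable cover by vector-bundle-chart domains, and then to invoke Pitt's original domination theorem for compact operators between scalar $\mathsf{L}^q$-spaces. First I would use the hypothesis that $E\to X$ and $E'\to X'$ are countably generated to fix covers $X=\bigcup_n U_n$ and $X'=\bigcup_m U'_m$ by domains of vector bundle charts, and then refine these to two disjoint measurable partitions $\{V_n\}$ of $X$ and $\{V'_m\}$ of $X'$ with $V_n\subset U_n$, $V'_m\subset U'_m$. On each $V_n$ one can use the chart (after a Gram--Schmidt orthonormalization, which is measurable by Definition \ref{frame}) to produce a measurable orthonormal frame, yielding a fiberwise isometric identification $\Gamma_{\mathsf{L}^q}(V_n,E;\Id\mu)\cong \mathsf{L}^q(V_n,\Id\mu)^{\oplus d}$, and similarly $\Gamma_{\mathsf{L}^{q'}}(V'_m,E';\Id\mu')\cong \mathsf{L}^{q'}(V'_m,\Id\mu')^{\oplus d'}$. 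Relative to these identifications, $B$ becomes a (countably infinite) matrix of scalar operators $B^{(i,j)}_{n,m}:\mathsf{L}^{q'}(V'_m,\Id\mu')\to \mathsf{L}^q(V_n,\Id\mu)$.

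Next I would record the key pointwise domination consequences of $|Bf(x)|_x\le A|f|(x)$. Since each frame is orthonormal, the $j$-th scalar component of $Bf$ on $V_n$ is bounded in absolute value by $|Bf(x)|_x\le A|f|(x)$; and restricting the input to a single component supported on $V'_m$, one gets $|B^{(i,j)}_{n,m}g|(x)\le A\bigl(1_{V'_m}|g|\bigr)(x)\le (1_{V_n}A\, \widehat{1_{V'_m}})|g|(x)$ for $\mu$-a.e.\ $x$, where on the right $A$ is applied to the nonnegative scalar function $1_{V'_m}|g|$ and $A$ is positivity preserving. Thus each scalar block $B^{(i,j)}_{n,m}$ is dominated (in the scalar sense $|B^{(i,j)}_{n,m}g|\le \tilde A_{n,m}|g|$ with $\tilde A_{n,m}:=1_{V_n}A\,\widehat{1_{V'_m}}$ positivity preserving) by a compression of $A$, which is itself compact as a compression of a compact operator. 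By Pitt's theorem \cite{pitt} in the scalar setting (the case $d=d'=1$ of the present statement, which is exactly Pitt's result), each $B^{(i,j)}_{n,m}$ is a compact operator between the relevant scalar $\mathsf{L}^q$-spaces.

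Finally I would assemble compactness of $B$ from compactness of the blocks. Writing $B=\lim_{N\to\infty} B_N$ where $B_N$ is the truncation to $n,m\le N$ (a finite matrix of compact operators, hence compact), it suffices to show $\|B-B_N\|\to 0$ in operator norm. This is where a genuine estimate is needed: using $|Bf(x)|_x\le A|f|(x)$ together with the facts that $A$ is compact — hence maps the unit ball into a set with uniformly small "tails" outside large $U_n$'s, and has small "tails" coming from large $V'_m$ by compactness again (approximate $A$ by $A\,\widehat{1_{\bigcup_{m\le N}V'_m}}$ in norm) — one controls $\|(B-B_N)f\|$ by $\bigl\|1_{\bigcup_{n>N}V_n}A|f|\bigr\|+\bigl\|A\,\widehat{1_{\bigcup_{m>N}V'_m}}|f|\bigr\|$, both of which tend to $0$ uniformly over $\|f\|\le 1$ by compactness of $A$. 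The main obstacle is precisely this last tail estimate: making rigorous that compactness of $A$ forces the off-diagonal blocks of $B$, summed, to be uniformly small in operator norm, rather than merely each block being compact; everything else (measurable orthonormalization, the fiberwise isometries, and the blockwise domination) is routine once the cover is fixed.
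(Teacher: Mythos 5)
Your approach is genuinely different from the paper's, and the difficulty you flag at the end is a real gap, not a routine detail. The paper does \emph{not} block the operator along the doubly indexed family $\{V_n\}\times\{V'_m\}$: it observes that, precisely because the sets $V_n$ are pairwise disjoint and cover $X$, the locally defined frames $e^{(n)}_1,\dots,e^{(n)}_d$ on $V_n$ can be glued into a single \emph{global} measurable frame $e_1,\dots,e_d\in\Gamma(X,E)$ by setting $e_j\mid_{V_n}:=e^{(n)}_j\mid_{V_n}$, orthonormalized fiberwise by Gram--Schmidt, and similarly for $E'\to X'$. The resulting trivializations $\Psi:\Gamma_{\mathsf{L}^q}(X,E;\Id\mu)\to\mathsf{L}^q(X,\Id\mu;\IC^d)$ and $\Psi'$ are isometric isomorphisms of the \emph{whole} section spaces, so $\tilde B:=\Psi B(\Psi')^{-1}$ is a \emph{finite} $d\times d'$ matrix of scalar operators $\tilde B_{ij}:\mathsf{L}^{q'}(X',\Id\mu')\to\mathsf{L}^q(X,\Id\mu)$, each dominated by $A$ pointwise and hence compact by Pitt's theorem. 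A finite sum of compacts is compact, and the proof ends there, with no tail estimate of any kind.

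Concretely, your proposed tail control is where the argument breaks down. You need $\|\widehat{1}_{\bigcup_{n>N}V_n}A\|\to 0$ in operator norm, which for compact $A$ would follow if $\widehat{1}_{\bigcup_{n>N}V_n}\to 0$ strongly on $\mathsf{L}^q(X,\Id\mu)$. For $1\le q<\infty$ this holds by dominated convergence, but the theorem explicitly allows $q=\infty$, and in $\mathsf{L}^\infty$ these truncations do \emph{not} converge strongly to zero (take $f\equiv 1$ on a space where $\bigcup_{n>N}V_n$ is never negligible). So the operator-norm convergence of your truncations $B_N$ fails in exactly the endpoint case the statement covers, and even for $q<\infty$ you are doing far more work than the hypothesis of countable generation requires. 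The fix is the observation you did not make: countable generation plus the disjointification of the cover already yields a global measurable (orthonormal) frame, which collapses the infinite block structure to a single finite matrix and removes the convergence issue entirely. The rest of your outline (measurable Gram--Schmidt, the fiberwise isometries, and the domination $|\tilde B_{ij}f|\le A|f|$ permitting Pitt's theorem) matches the paper.
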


\begin{proof} We are going to reduce the bundle-valued problem by a measure theoretic localization argument to the scalar situation in \cite{pitt}. Assume we are given a cover  
$$
X=\bigcup_{n\in\IN} U_n,\>\text{ with vector bundle charts }\>\phi^{(n)}:\bigcup_{x\in U_n} E_{x}\longrightarrow U_n\times \IC^d. 
$$
Then for each $n$ we get a frame $e^{(n)}_1,\dots,e^{(n)}_{d}\in \Gamma(U_n,E)$ by setting $e^{(n)}_j(x):=\phi^{(n),-1}(x,v_j)$, $x\in U_n$, where $v_1,\dots,v_d\in\IC^d$ is some basis. Let us define measurable sets $V_n\subset X$ by
$$
V_1:=U_1, \>V_n:=U_n-  \Big(\bigcup_{l\in\IN:\> l\ne n } U_l\Big)\>\text{ for $n\geq 2$, so that $X=\bigsqcup_{n=1}^\infty V_n$}. 
$$
Then we can define \emph{globally defined measurable} frames $e_1,\dots,e_d\in \Gamma(X,E)$ by setting $e_j\mid_{V_n}:=e^{(n)}_j\mid_{V_n}$ for $j=1,\dots,d$, and, in view of the Gram-Schmidt algorithm, we can and will assume this frame to be orthonormal, and then this frame induces the isometric isomorphism 
\begin{align}
&\Psi:\Gamma_{\mathsf{L}^{q}}(X,E;\mu)\longrightarrow \mathsf{L}^{q}\left(X,\Id\mu;\IC^{d}\right), \nn\\
&\Psi f(x):=\big[(f(x),e_1(x))_x,\dots,(f(x),e_d(x))_x\big].\nn
\end{align}

Likewise, we can also construct a orthonormal frame $e_1\rq{},\dots,e_{d\rq{}}\rq{}\in \Gamma(X\rq{},E\rq{})$ and the corresponding  the isometric isomorphism
\begin{align*}
&\Psi\rq{}:\Gamma_{\mathsf{L}^{q\rq{}}}(X\rq{},E\rq{};\Id\mu\rq{})\longrightarrow \mathsf{L}^{q\rq{}}(X\rq{},\Id\mu\rq{};\IC^{d\rq{}}),\nn\\ 
&\Psi\rq{}f(x):=\big[(f(x),e_1\rq{}(x))_x,\dots,(f(x),e_d\rq{}(x))_x\big].
\end{align*}

Let us define a bounded linear operator $\tilde{B}$ by the diagram
\begin{align}
\begin{xy}
\xymatrix{
\Gamma_{\mathsf{L}^{q\rq{}}}(X\rq{},E\rq{};\Id\mu\rq{})   \ar[rr]^{B} & & \Gamma_{\mathsf{L}^{q}}(X,E;\Id\mu)\ar[dd]^{\Psi}\\\\
\mathsf{L}^{q\rq{}}(X\rq{},\Id\mu\rq{};\IC^{d\rq{}})\ar[uu]^{(\Psi\rq{})^{-1}}\ar[rr]_{\tilde{B}}  & & \mathsf{L}^{q}(X,\Id\mu;\IC^{d})
}\end{xy}\nn
\end{align}
Then clearly the proof is complete, if we can show that $\tilde{B}$ is compact. To this end, note that $\tilde{B}$ operates as follows:
\begin{align*}
\tilde{B}[f_1,\dots,f_{d\rq{}}](x)=\sum^{d\rq{}}_{j=1}\left[\big(B(f_je_j\rq{})(x),e_1(x)\big)_x,\dots,\big(B(f_je_j\rq{})(x),e_d(x)\big)_x\right].\nn
\end{align*}
If for each $j=1,\dots,d\rq{}$ and $i=1,\dots,d$ we define the bounded linear operator 
$$
\tilde{B}_{ij}:\mathsf{L}^{q\rq{}}(X\rq{},\Id\mu\rq{})\longrightarrow \mathsf{L}^{q}(X,\Id\mu),\>\tilde{B}_{ij}f(x):=\big(B(fe_j\rq{})(x),e_i(x)\big)_x,
$$
then the inequality $|\tilde{B}_{ij}f(x)|\leq A|f|(x)$, valid for $\mu$-a.e. $x\in X$, shows that we can use the main result from \cite{pitt} (in combination with Remark I therein, where the seperability assumptions are removed a posteriori) to deduce that $\tilde{B}_{ij}$ is compact. Finally, with 
$$
P_j\rq{}: \mathsf{L}^{q\rq{}}(X\rq{},\Id\mu\rq{};\IC^{d\rq{}})\longrightarrow \mathsf{L}^{q\rq{}}\left(X\rq{},\Id\mu\rq{}\right),\>[f_1,\dots,f_{d\rq{}}]\longmapsto f_j
$$ 
the canonical projection, one has $\tilde{B}=\bigoplus^d_{i=1} \sum^{d\rq{}}_{j=1}\tilde{B}_{ij}P_j\rq{}$, so that the latter operator indeed is compact, and the proof is complete.
\end{proof}

\begin{Remark} We have shown in the proof of Lemma \ref{pits} that any measurable countably generated vector bundle admits a globally defined measurable frame. However, it should be noted that in typical applications one starts with structures that are much \lq\lq{}smoother\rq\rq{} than measurable, and then the latter measurable identification, though global, is not very useful. This is the case, for example, in Section \ref{Riemann}, where we consider linear partial differential operators with smooth coefficients. 
\end{Remark}

\emph{For the rest of this section, we fix a measurable Hermitian vector bundle $E\to X$ with rank $d$.} \vspace{1.4mm}

We will need the following simple observation, which follows immediately from taking Laplace and Post-Wedder transforms (see for example \cite{hess}):

\begin{Lemma}\label{dad} Let $S$ be a self-adjoint nonnegative operator in $\mathsf{L}^2(X,\Id\mu)$, and let $T$ be a self-adjoint nonnegative operator in $\Gamma_{\mathsf{L}^{2}}(X,E;\Id\mu)$. Then the following two statements are equivalent: 
\begin{align}
&\emph{i)}\>\text{ For any $t\geq 0$, $f\in\Gamma_{\mathsf{L}^{2}}(X,E;\Id\mu)$ one has}\nn\\
&\>\>\>\>\>\>\text{  $\left|\mathrm{e}^{-t T}f(x)\right|_x\leq  \mathrm{e}^{-t S}|f|(x)$ for $\mu$-a.e. $x\in X$,}  \\
&\emph{ii)}\>\text{ For any $a>0$, $f\in\Gamma_{\mathsf{L}^{2}}(X,E;\Id\mu)$ one has} \nn\\
&\>\>\>\>\>\>\text{ $\left|(T+ a)^{-1}f(x)\right|_x\leq  (S+a)^{-1}|f|(x)$ for $\mu$-a.e. $x\in X$.}
\label{domi4} 
\end{align}
Given i) or ii), one necessarily has $\min\sigma(T)\geq \min\sigma(S)$.
\end{Lemma}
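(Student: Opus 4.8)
The plan is to prove the two implications $\text{i)} \Leftrightarrow \text{ii)}$ separately via integral transforms, and then to deduce the spectral bound from ii).

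\textbf{From i) to ii).} First I would use the Laplace transform representation of the resolvent: for a self-adjoint nonnegative operator $T$ and $a>0$,
\[
(T+a)^{-1} = \int_0^\infty \mathrm{e}^{-at}\,\mathrm{e}^{-tT}\,\Id t
\]
in the strong sense. Fix $f\in\Gamma_{\mathsf{L}^2}(X,E;\Id\mu)$ and pick a sequence of Riemann sums of $\int_0^\infty \mathrm{e}^{-at}\mathrm{e}^{-tT}f\,\Id t$ converging in $\Gamma_{\mathsf{L}^2}$; passing to a subsequence, they converge $\mu$-a.e. pointwise as sections, and by the pointwise triangle inequality $|\cdot|_x$ together with i) applied termwise, each Riemann sum is dominated pointwise a.e. by the corresponding Riemann sum for $\int_0^\infty \mathrm{e}^{-at}\mathrm{e}^{-tS}|f|\,\Id t = (S+a)^{-1}|f|$ (here one uses positivity preservation of $\mathrm{e}^{-tS}$ to keep the sum of dominations monotone/consistent). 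Taking the a.e. limit yields $|(T+a)^{-1}f(x)|_x\leq (S+a)^{-1}|f|(x)$ for $\mu$-a.e. $x$. The only care needed is the usual measure-theoretic bookkeeping: choosing one common null set along a diagonal subsequence over which all the relevant convergences hold simultaneously, which is fine since we deal with countably many terms.

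\textbf{From ii) to i).} Here I would invert the above by using the Post--Widder (Post--Wedder) inversion formula, exactly as the remark before the lemma suggests: for $t>0$,
\[
\mathrm{e}^{-tT} = \lim_{n\to\infty}\Big(\tfrac{n}{t}\Big)^{n}\frac{(-1)^{n}}{n!}\,\frac{\Id^{n}}{\Id a^{n}}\Big|_{a=n/t}(T+a)^{-1} = \lim_{n\to\infty}\Big(\tfrac{n}{t}\Big)^{n}\frac{1}{(n-1)!}\big(T+\tfrac{n}{t}\big)^{-n},
\]
with convergence in the strong operator topology. Each power $(T+a)^{-n}$ is an $n$-fold composition of resolvents, and iterating the domination ii) --- again using positivity preservation of $(S+a)^{-1}$ at each stage --- gives $|(T+a)^{-n}f(x)|_x\leq (S+a)^{-n}|f|(x)$ a.e. The nonnegative linear combination defining the Post--Widder approximant then inherits the domination, and passing to the a.e. limit along a subsequence gives i). Again the work is purely in organizing a.e. convergence along a common subsequence.

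\textbf{The spectral bound.} Given ii), for any $f\in\Gamma_{\mathsf{L}^2}(X,E;\Id\mu)$ we get
\[
\|(T+a)^{-1}f\|_{\mu,2} = \big\||(T+a)^{-1}f|\big\|_{\mu,2}\leq \big\|(S+a)^{-1}|f|\big\|_{\mu,2}\leq \|(S+a)^{-1}\|_{\mu,2\to 2}\,\|f\|_{\mu,2},
\]
so $\|(T+a)^{-1}\|\leq \|(S+a)^{-1}\|$ for all $a>0$. Since both operators are self-adjoint and nonnegative, $\|(T+a)^{-1}\| = (\min\sigma(T)+a)^{-1}$ and likewise for $S$, hence $\min\sigma(T)+a\geq \min\sigma(S)+a$, i.e. $\min\sigma(T)\geq\min\sigma(S)$.

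\textbf{Main obstacle.} The genuinely delicate point is the a.e.\ pointwise passage to the limit in the integral/series representations: operator convergence is only in $\mathsf{L}^2$, so one must extract subsequences converging $\mu$-a.e.\ and verify that a single null set works for all the terms appearing in the domination inequality. The role of positivity preservation of $\mathrm{e}^{-tS}$ (resp.\ $(S+a)^{-1}$) is exactly to ensure the dominating side behaves monotonically under the (nonnegative-coefficient) Riemann sums and resolvent iterations, so that the termwise inequalities assemble correctly; this is where that hypothesis is used.
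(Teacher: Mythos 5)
Your proposal follows exactly the route the paper indicates -- the paper's proof of Lemma~\ref{dad} is a one-line appeal to the Laplace and Post--Widder transforms with a citation of Hess--Schrader--Uhlenbrock \cite{hess}, and you have simply unpacked that hint correctly. The Laplace-transform direction (Riemann sums of $\int_0^\infty \mathrm{e}^{-at}\mathrm{e}^{-tT}f\,\Id t$, termwise fiberwise triangle inequality, a.e.\ limit along a subsequence), the Post--Widder direction (iterating the resolvent domination using positivity preservation of $(S+a)^{-1}$ and passing to the strong limit), and the spectral bound via $\|(T+a)^{-1}\|=(\min\sigma(T)+a)^{-1}$ are all sound.

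One small slip worth flagging: your displayed Post--Widder identity is numerically off. With $F(a)=(T+a)^{-1}$ one has $F^{(n)}(a)=(-1)^n n!\,(T+a)^{-n-1}$, so $\tfrac{(-1)^n}{n!}F^{(n)}(a)=(T+a)^{-n-1}$, and the inversion formula gives $\mathrm{e}^{-tT}=\operatorname*{s-lim}_n (n/t)^{n+1}(T+n/t)^{-n-1}$, not the expression with the extra $\tfrac{1}{(n-1)!}$. Equivalently one may use the standard Euler/Yosida form $\mathrm{e}^{-tT}=\operatorname*{s-lim}_n\bigl(\tfrac{n}{t}(T+\tfrac{n}{t})^{-1}\bigr)^{n}$. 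The error is harmless here because your argument only uses that the approximants are nonnegative scalar multiples of powers of resolvents, but the coefficients should be fixed.
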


\begin{Definition} In the situation of Lemma \ref{dad}, if one of the equivalent conditions i), ii) is satisfied, we will write $T\succeq S$ and say that \emph{$T$ dominates $S$ in the Kato sense}.
\end{Definition}

Generalizing the scalar case, one has the following canonical notion of multiplication operators on vector bundles: For any $W\in \Gamma(X,\mathrm{End}(E))$, the corresponding \emph{maximally defined multiplication operator} in $\Gamma_{\mathsf{L}^{2}}(X,E;\Id\mu)$ is given as follows:
\begin{align*}
&\mathrm{Dom}(\hat{W})=\left.\big\{f\right|f\in\Gamma_{\mathsf{L}^{2}}(X,E;\Id\mu), Wf\in \Gamma_{\mathsf{L}^{2}}(X,E;\Id\mu)\big\},\>\hat{W}f(x)=W(x)f(x). 
\end{align*}
The operator $\hat{W}$ is depends only on the $\mu$-equivalence class of $W$, and is self-adjoint if and only if $W$ is ($\mu$-a.e.) pointwise self-adjoint, which can be seen with the same arguments as in the scalar situation. 

Being prepared with these observations, we can now prove the following generalization of Theorem \ref{main} to vector bundles:

\begin{Corollary} \label{mag1} Given $q\geq 1$ and an $\mathsf{L}^{q}$-control function for $p$, let $E\to X$ is countably generated, and let $\tilde{H}$ be a self-adjoint nonnegative operator in $\Gamma_{\mathsf{L}^2}(X,E;\Id\mu)$ which satisfies the Kato domination property $\tilde{H}\succeq H_p$. Furthermore, assume that $W\in\Gamma(X,\mathrm{End}(E))$ admits a decomposition $W=W_1+W_2$, with $W_j\in \Gamma(X,\mathrm{End}(E))$ pointwise self-adjoint and 

\begin{enumerate}
\item[$\cdot$] either $q=1$ and
$$
|W_1|\in\mathsf{L}^{2 }(X,F_1\Id\mu),\>|W_2|\in\mathsf{L}^{\infty}_{\infty}(X,\Id\mu)\cap \mathsf{L}_{\infty}(X,F_1\Id\mu).  
$$
\item[$\cdot$] or $q>1$, $F_1\equiv 1$ and 
$$
|W_1|\in\mathsf{L}^{2q}(X,\Id\mu),\>|W_2|\in\mathsf{L}^{\infty}_{\infty}(X,\Id\mu).  
$$

\end{enumerate}
Then one has 
$$
\hat{W}(\tilde{H}+a)^{-1}\in\ILL^{\infty}(\Gamma_{\mathsf{L}^2}(X,E;\Id\mu))\>\text{ for all $a>0$}.
$$
 In particular, $\mathrm{Dom}(\tilde{H}+\hat{W})=\mathrm{Dom}(\tilde{H})$, and $\tilde{H}+\hat{W}$ is self-adjoint and semibounded from below with $\sigma_{\mathrm{ess}}(\tilde{H}+\hat{W})= \sigma_{\mathrm{ess}}(\tilde{H})$.
\end{Corollary}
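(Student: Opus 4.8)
The plan is to reduce Corollary~\ref{mag1} to the scalar Theorem~\ref{main} by combining the Kato domination hypothesis $\tilde H\succeq H_p$ with the vector-bundle version of Pitt's theorem established in Theorem~\ref{pits}. First I would set $\tilde V := |W_1|$ and observe that, by the scalar Theorem~\ref{main} applied to the nonnegative potential $\tilde V$ (together with the trivial bound $0\le |W_2|\le\||W_2|\|_{\mu,\infty}$ and the decomposition hypotheses, which guarantee that $|W|$ satisfies the same $\mathsf L^{2q}(X,F_1\,\Id\mu)$-type integrability as required there, exactly via Steps~5--7 of that proof), the scalar operator $A := \widehat{|W|}\,(H_p+a)^{-1}$ is compact on $\mathsf L^2(X,\Id\mu)$ for every $a>0$. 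Here I would take a moment to check that the hypotheses of Corollary~\ref{mag1} on $W_1,W_2$ are literally the hypotheses of Theorem~\ref{main} with $V_j$ replaced by $|W_j|$ (note $|W_2|\ge 0$, and $|W_1|+|W_2|\ge|W|$ pointwise, while conversely $|W|\le|W_1|+|W_2|$, so the integrability transfers both ways up to constants).

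Next I would verify the domination inequality needed to invoke Theorem~\ref{pits} with $X'=X$, $\mu'=\mu$, $E'=E$, $q'=q=2$, and $B := \hat W(\tilde H+a)^{-1}$, $A := \widehat{|W|}(H_p+a)^{-1}$. For $f\in\Gamma_{\mathsf L^2}(X,E;\Id\mu)$ one estimates pointwise, for $\mu$-a.e.\ $x$,
\begin{align*}
|Bf(x)|_x = \big|W(x)\,(\tilde H+a)^{-1}f(x)\big|_x \le |W(x)|_x\,\big|(\tilde H+a)^{-1}f(x)\big|_x \le |W|(x)\,(H_p+a)^{-1}|f|(x) = A|f|(x),
\end{align*}
where the first inequality is the operator-norm bound in $\mathrm{End}(E_x)$ and the second is precisely the Kato domination property in form ii) of Lemma~\ref{dad}. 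Since $A$ is positivity preserving (being the product of the positivity-preserving semigroup resolvent $(H_p+a)^{-1}$, obtained by Laplace transform from the $P_t$ which have nonnegative kernel, with the nonnegative multiplier $|W|$) and compact by the previous paragraph, Theorem~\ref{pits} yields that $B=\hat W(\tilde H+a)^{-1}\in\ILL^\infty(\Gamma_{\mathsf L^2}(X,E;\Id\mu))$.

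Finally, the abstract consequences follow exactly as in the scalar case: compactness of $\hat W(\tilde H+a)^{-1}$ for one (hence every) $a>0$ gives, by the standard perturbation results (Theorems~9.1 and~9.9 in \cite{weid}), that $\hat W$ is $\tilde H$-form-compact, so $\mathrm{Dom}(\tilde H+\hat W)=\mathrm{Dom}(\tilde H)$, the form sum $\tilde H+\hat W$ is self-adjoint and semibounded, and $\sigma_{\mathrm{ess}}(\tilde H+\hat W)=\sigma_{\mathrm{ess}}(\tilde H)$; one should note that $\hat W$ is self-adjoint because each $W_j$, hence $W$, is pointwise self-adjoint.

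I expect the only genuine subtlety to be bookkeeping rather than a real obstacle: one must be careful that Theorem~\ref{pits} is stated for $q'\in(1,\infty)$, which is satisfied here since $q'=2$, and that the scalar input $A$ must be verified to be compact as a map $\mathsf L^2\to\mathsf L^2$ in the $q=1$ case of Corollary~\ref{mag1} even though $F_1$ need not be bounded --- but this is exactly what Step~1 together with Steps~5--7 of the proof of Theorem~\ref{main} deliver when applied to the potential $|W_1|+1_{Y_n}|W_2|$. The positivity-preservation of $A$ deserves an explicit line since Theorem~\ref{pits} requires it, but it is immediate from $p\ge 0$ and $W\ge 0$ in the relevant sense. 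Apart from these checks, the argument is a clean reduction.
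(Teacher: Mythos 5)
Your reduction follows the paper's own route: Kato domination (Lemma~\ref{dad} in form ii)), the scalar Theorem~\ref{main}, and the vector-bundle Pitt transfer (Theorem~\ref{pits}). One small point where your version is slightly rougher than the paper's: you take the dominating scalar operator to be $A:=\widehat{|W|}(H_p+a)^{-1}$ and then appeal to Theorem~\ref{main} to conclude compactness, but Theorem~\ref{main} requires an explicit decomposition of the dominating potential, and $|W|$ does not come with one (it only satisfies $|W|\le|W_1|+|W_2|$, which is an inequality, not a decomposition). The paper avoids this by dominating with $\widehat{|W_1|}+\widehat{|W_2|}=\widehat{|W_1|+|W_2|}$, for which the decomposition $V_1=|W_1|$, $V_2=|W_2|$ literally matches the hypotheses of Theorem~\ref{main}; inserting the extra line $|W|(x)\le|W_1|(x)+|W_2|(x)$ into your chain of inequalities closes this gap with no further work. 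A minor terminological slip: compactness of $\hat W(\tilde H+a)^{-1}$ is operator relative compactness, not form relative compactness (that would concern $\hat W(\tilde H+a)^{-1/2}$), and it is the former that gives domain preservation and stability of the essential spectrum via the cited Weidmann results; your conclusion is nevertheless the right one.
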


\begin{proof} By Lemma \ref{dad}, for any $a>0$, $f\in\Gamma_{\mathsf{L}^{2}}(X,E;\Id\mu)$ one has
\begin{align}
\left|\hat{W}(\tilde{H}+a)^{-1}f(x)\right|_x\leq  \left(\widehat{|W_1|}+\widehat{|W_2|}\right)(H_p+a)^{-1}|f|(x) \>\>\text{ for $\mu$-a.e. $x\in X$,} \nn
\end{align}
which implies that $\hat{W}(\tilde{H}+a)^{-1}$ is bounded, since $(\widehat{|W_1|}+\widehat{|W_2|})(H_p+a)^{-1}$ is compact, in particular bounded (the latter compactness follows from applying Theorem \ref{main} with $V_j:=|W_j|$, $V:=V_1+V_2$). Now one can directly use Theorem \ref{pits}.   
\end{proof}

\section{Covariant Schrödinger operators on weighted noncompact Riemannian manifolds}\label{Riemann}

\subsection{General facts} In this section we are going to specify the abstract measure theoretic results from the previous section to the setting of (weighted) Riemannian manifolds. Finally, in Example \ref{wasser} below, we are going to explain how the Hydrogen-type problems on manifolds that we have referred to in the introduction can be treated within our results.\vspace{1.2mm}

In section \ref{Riemann}, let $(M,g)$ be a connected smooth possibly noncompact Riemannian $m$-manifold (without boundary). The corresponding geodesic distance will be denoted $\Id_g(x,y)$, and the open geodesic ball at $x$ with radius $r$ will be written as 
$$
B_g(x,r)=\{y|\>\Id_g(x,y)<r\}.
$$
 Let $\mu_g$ stand for the Riemannian volume measure and for any $0<\varrho\in\mathsf{C}^{\infty}(M)$, let $\mu_{g,\varrho}$ be the smooth Borel measure 
$$
\Id\mu_{g,\varrho}( x)= \varrho(x)\Id\mu_g( x).
$$

If $E,F\to M$ are smooth (finite rank) Hermitian vector bundles, and 
$$
D:\Gamma_{\mathsf{C}^{\infty}}(M,E)\longrightarrow\Gamma_{\mathsf{C}^{\infty}}(M,F)
$$
is a smooth linear partial differential operator, then 
$$
D^{g,\varrho}:\Gamma_{\mathsf{C}^{\infty}}(M,F)\longrightarrow
\Gamma_{\mathsf{C}^{\infty}}(M,E) 
$$
will stand for its formal adjoint with respect to the complex scalar product  $\left\langle \cdot,\cdot\right\rangle_{\mu_{g,\varrho}}$. In other words, $D^{g,\varrho}:=D^{\dagger}$, where $\dagger$ refers to formal adjoints w.r.t. $\left\langle \cdot,\cdot\right\rangle_{\mu_{g,\varrho}}$.
\\
The symbol $\nabla^{TM}_g$ stands for the Levi-Civita connection w.r.t. $g$ on $T M$, where data corresponding to $T M$ will be considered as complexified, whenever necessary. In the above situation, the tripe $(M,g,\varrho)$ is usually referred to as a \emph{weighted Riemannian manifold} with weight function $\varrho$. The \emph{weighted Laplace-Beltrami operator} $\Delta_{g,\varrho}$ is defined by
\[
\Delta_{g,\varrho}:=\frac{1}{\varrho}\circ\mathrm{div}_g\circ \varrho\circ \mathrm{grad}_g:\mathsf{C}^{\infty}(M)\longrightarrow \mathsf{C}^{\infty}(M),
\]
with $\mathrm{div}_g$ the usual Riemannian divergence, and $\mathrm{grad}_g$ the Riemannian gradient.

Note that the weighted Laplace-Beltrami operator can also be written as 
$$
\Delta_{g,\varrho}=-\Id^{g,\varrho}\Id,
$$
where 
$$
\Id:\mathsf{C}^{\infty}(M)\longrightarrow \Gamma_{\mathsf{C}^{\infty}}(M, T^*M)
$$
stands for the usual exterior derivative, and where these data are complexified in the sequel. In particular, the operator $-\Delta_{g,\varrho}$ with domain of definition $\mathsf{C}^{\infty}_{\mathrm{c}}(M)$ is a symmetric nonnegative operator in $\mathsf{L}^2(M,\Id\mu_{g,\varrho})$. \\
The following result is by now well-known (cf. Theorem 7.13 and Theorem 9.5 in \cite{buch} for a detailed proof):

\begin{Propandef} For all fixed $y\in M$, there exists a pointwise minimal (necessarily smooth) element $p_{g,\varrho}(\cdot,\cdot,y)$ of the set given by all functions
$$
u:(0,\infty)\times M\longrightarrow [0,\infty)
$$
which satisfy the following equation in $(0,\infty)\times M$,
\[
\partial_t u(t,x)=\Delta_{g,\varrho} u(t,x),\>\>\lim_{t\to 0+} u(t,\cdot)=\delta_y. 
\]
The function  
\[
p_{g,\varrho}:(0,\infty)\times M\times M\longrightarrow  [0,\infty)
\]
is jointly smooth, and defines a pointwise consistent $\mu_{g,\varrho}$-heat-kernel in the sense of Definition \ref{heat}, called the \emph{minimal nonnegative heat kernel} on $(M,g,\varrho)$. 
\end{Propandef}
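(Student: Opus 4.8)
The plan is to realize $p_{g,\varrho}$ as the monotone limit of Dirichlet heat kernels along a compact exhaustion and then transport each required property to the limit. First I would fix an exhaustion $M=\bigcup_{k\in\IN}\Omega_k$ by connected, relatively compact open subsets with smooth boundary satisfying $\overline{\Omega_k}\subset\Omega_{k+1}$. On each $\Omega_k$ the quadratic form $f\mapsto\int_{\Omega_k}|\Id f|^2\,\Id\mu_{g,\varrho}$ with form core $\mathsf{C}^{\infty}_{\mathrm{c}}(\Omega_k)$ is closable, and its closure is associated with a nonnegative self-adjoint operator; I write $\Delta_k$ for the corresponding Dirichlet weighted Laplacian on $\Omega_k$ (so $-\Delta_k\geq 0$), which has compact resolvent by Rellich. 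Elliptic regularity makes the eigenfunctions of $\Delta_k$ smooth, so the spectral expansion of $\mathrm{e}^{t\Delta_k}$ together with standard elliptic estimates on the eigendata and Weyl asymptotics shows that $\mathrm{e}^{t\Delta_k}$ has a jointly smooth kernel $p_k$ on $(0,\infty)\times\Omega_k\times\Omega_k$. This $p_k$ is symmetric in its two space variables, strictly positive by the strong parabolic maximum principle on the connected set $\Omega_k$, satisfies the semigroup identity inside $\Omega_k$, and obeys $\int_{\Omega_k}p_k(t,x,y)\,\Id\mu_{g,\varrho}(y)\leq 1$ because the constant $1$ is a supersolution of the Dirichlet problem.

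Next I would extend each $p_k(\cdot,\cdot,y)$ by zero on $(0,\infty)\times(M\setminus\Omega_k)$. Applying the parabolic minimum principle to $p_{k+1}-p_k$ on $\Omega_k$ --- a supersolution that is nonnegative on the lateral boundary and has vanishing distributional initial datum --- yields $p_k\leq p_{k+1}$ pointwise, so that $p_{g,\varrho}(t,x,y):=\sup_k p_k(t,x,y)\in[0,\infty]$ is well defined. Monotone convergence and the uniform bound $\int_{\Omega_k}p_k\leq 1$ give (\ref{A3}); in particular $p_{g,\varrho}(t,x,\cdot)\in\IL^1(M,\Id\mu_{g,\varrho})$ is finite $\mu_{g,\varrho}$-almost everywhere, and feeding this uniform local $\IL^1$ control into interior $\IL^2$ and Schauder estimates for $\partial_t-\Delta_{g,\varrho}$ upgrades $p_k\to p_{g,\varrho}$ to locally uniform convergence of all derivatives. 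Hence $p_{g,\varrho}$ is jointly smooth and solves $\partial_t u=\Delta_{g,\varrho}u$ on $(0,\infty)\times M$, while symmetry (\ref{A2}) and the Chapman--Kolmogorov identity (\ref{A1}) pass to the limit by monotone convergence from the corresponding facts for the $p_k$. For the initial condition $\lim_{t\to 0+}p_{g,\varrho}(t,\cdot)=\delta_y$ one combines $p_k\leq p_{g,\varrho}$, the weak convergence $p_k(t,\cdot)\rightharpoonup\delta_y$ as $t\to 0+$, and the uniform mass bound $\leq 1$ in a standard approximate-identity argument.

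For minimality let $u\geq 0$ be any solution of the heat equation with initial datum $\delta_y$. On each $\Omega_k$ the difference $u-p_k$ is a supersolution that is nonnegative on the lateral boundary and has vanishing distributional initial datum, so the minimum principle forces $u\geq p_k$ on $\Omega_k$; letting $k\to\infty$ gives $u\geq p_{g,\varrho}$, which also shows that the minimal element is unique (and smooth, being the solution just constructed). It remains to verify the strong-continuity requirement (\ref{con}). For this I would identify the semigroup $P_t$ induced by $p_{g,\varrho}$ with $\mathrm{e}^{t\Delta_{g,\varrho}}$ for the Friedrichs realization of $-\Delta_{g,\varrho}$ on $\IL^2(M,\Id\mu_{g,\varrho})$: the Dirichlet forms of the $\Omega_k$ increase to the Friedrichs form on $M$, so by monotone form convergence the semigroups $\mathrm{e}^{t\Delta_k}$ converge strongly, their kernels being exactly the $p_k$, whence the limit semigroup has kernel $p_{g,\varrho}$; strong continuity is then automatic from self-adjointness via the spectral theorem.

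The point demanding real care, and where most of the work sits, is the behaviour as $t\to 0+$: one must make rigorous, uniformly in $k$, the sense in which $p_k$ attains the Dirac initial datum, so that both the approximate-identity step for (\ref{A1})--(\ref{A3}) at small times and the comparison argument for minimality are valid on a manifold that need not be complete and may have infinite volume. Everything else reduces to the exhaustion construction and the parabolic maximum principle. A complete treatment along these lines is carried out in \cite[Theorems 7.13 and 9.5]{buch}.
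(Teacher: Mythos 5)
Your construction is exactly the one the paper defers to: the statement carries no proof in the text, only a pointer to Theorems~7.13 and 9.5 in Grigor'yan's book \cite{buch}, and those theorems are proved there precisely by the exhaustion/Dirichlet-kernel/monotone-limit scheme you describe, with the identification of the limit semigroup via monotone form convergence and the small-time Dirac behaviour handled as you indicate. So the proposal is correct and takes essentially the same route as the reference the paper relies on.
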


It follows that 
$$
H_{g,\varrho}:=H_{p_{g,\varrho}}
$$
is precisely the Friedrichs realization of $-\Delta_{g,\varrho}$ in $\mathsf{L}^2(M,\Id\mu_{g,\varrho})$ (cf. Corollary 4.11 in \cite{buch}). We refer the reader to \cite{buch} for proofs of the above facts.\vspace{1mm}

Likewise, given a smooth unitary (= Hermitian) covariant derivative $\nabla$ on a smooth Hermitian vector bundle $E\to M$, let $H^{\nabla}_{g,\varrho}$ denote the Friedrichs realization of the symmetric nonnegative operator $\nabla^{g,\varrho}\nabla$ in the Hilbert space $\Gamma_{\mathsf{L}^2}(M,E;\Id\mu_{g,\varrho})$, and given a locally integrable section $V\in\Gamma(M,\mathrm{End}(E))$ which is pointwise self-adjoint with nonnegative eigenvalues, let $H^{\nabla}_{g,\varrho,V}$ denote the form sum 
$$
H^{\nabla}_{g,\varrho,V}:=H^{\nabla}_{g,\varrho}\dotplus \hat{V}.
$$

\begin{Remark}\label{ddkja} In the usual Riemannian case $\varrho\equiv 1$, we will simply ommit the $\varrho$ everywhere in the notation that we have used above.
\end{Remark}

We record:

\begin{Lemma}\label{dlk} Given a smooth unitary covariant derivative $\nabla$ on a smooth Hermitian vector bundle $E\to M$, and a self-adjoint section $0\leq V\in\Gamma_{\mathsf{L}^1_{\mathrm{loc}}}(M,\mathrm{End}(E))$, one has the domination property $$H^{\nabla}_{g,\varrho,V}\succeq H_{g,\varrho}.$$
\end{Lemma}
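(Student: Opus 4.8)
The statement to prove is Lemma~\ref{dlk}: the Kato domination property $H^{\nabla}_{g,\varrho,V}\succeq H_{g,\varrho}$ for the covariant Schr\"odinger operator with nonnegative potential.

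The plan is to establish condition i) of Lemma~\ref{dad}, namely the pointwise semigroup domination $|\mathrm{e}^{-tH^{\nabla}_{g,\varrho,V}}f(x)|_x\leq \mathrm{e}^{-tH_{g,\varrho}}|f|(x)$ for $\mu_{g,\varrho}$-a.e.\ $x$. This is the classical Kato--Simon diamagnetic/domination inequality, and the cleanest route is to invoke the abstract criterion of Hess--Schrader--Uhlenbrock / Simon characterizing when one Schr\"odinger-type semigroup dominates another in terms of the associated quadratic forms. First I would reduce to the case $V\equiv 0$: since $V\geq 0$ pointwise, the Feynman--Kac--It\^o type bound (or the Trotter product formula $\mathrm{e}^{-t(H^{\nabla}_{g,\varrho}\dotplus\hat V)}=\lim_n(\mathrm{e}^{-(t/n)H^{\nabla}_{g,\varrho}}\mathrm{e}^{-(t/n)\hat V})^n$ combined with $|\mathrm{e}^{-s\hat V}\psi|_x=\mathrm{e}^{-sV(x)}|\psi|_x\leq|\psi|_x$ pointwise) shows that adding a nonnegative $V$ only makes the semigroup smaller in absolute value, so it suffices to prove $H^{\nabla}_{g,\varrho}\succeq H_{g,\varrho}$.

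For the remaining domination $H^{\nabla}_{g,\varrho}\succeq H_{g,\varrho}$, I would verify the two conditions of the Hess--Schrader--Uhlenbrock criterion: (a) the scalar semigroup $\mathrm{e}^{-tH_{g,\varrho}}$ is positivity preserving --- this is immediate since it is given by the nonnegative kernel $p_{g,\varrho}$; and (b) the first Kato inequality for the covariant derivative: for $f\in\mathrm{Dom}(\nabla)$ one has $|f|\in\mathrm{Dom}(\Id)$ (the form domain of $H_{g,\varrho}$) with $|\,\Id|f|\,|\leq|\nabla f|$ pointwise a.e., and moreover $\langle \Id|f|,\Id|h|\rangle\le \mathrm{Re}\langle\nabla f,\nabla h\rangle$-type domination of the forms on nonnegative $h$. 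The pointwise Kato inequality $|\Id|f||\le|\nabla f|$ is proved by the standard regularization trick: work with $|f|_\varepsilon:=\sqrt{|f|^2+\varepsilon^2}-\varepsilon$, compute $\Id|f|_\varepsilon^2=2\,\mathrm{Re}(f,\nabla f)$ using that $\nabla$ is metric (unitary), apply Cauchy--Schwarz to get $|\Id|f|_\varepsilon|\le|\nabla f|$, and let $\varepsilon\to0$. Together with positivity preservation of the scalar heat semigroup, these inputs give condition i) of Lemma~\ref{dad}, hence $T\succeq S$ in our notation, and finally the last clause of Lemma~\ref{dad} gives the spectral bound for free.

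I expect the main obstacle to be a bookkeeping one rather than a conceptual one: since $H^{\nabla}_{g,\varrho}$ and $H_{g,\varrho}$ are Friedrichs realizations of operators initially defined only on $\mathsf{C}^\infty_{\mathrm c}$, one must be careful that the Kato inequality, proved first for smooth compactly supported sections, passes to the full form domains by an approximation argument, and that the weight $\varrho$ causes no trouble --- but since $\varrho$ is smooth and positive it only modifies the measure, not the differential identities, and $\nabla^{g,\varrho}\nabla$ still has the same principal part, so the Kato inequality argument goes through verbatim with $\Id\mu_g$ replaced by $\Id\mu_{g,\varrho}$. An alternative, essentially equivalent, route that avoids form-domain subtleties is to cite the probabilistic Feynman--Kac formula for $\mathrm{e}^{-tH^{\nabla}_{g,\varrho}}$ in terms of stochastic parallel transport along Brownian motion generated by $\Delta_{g,\varrho}$, from which $|\mathrm{e}^{-tH^{\nabla}_{g,\varrho}}f(x)|_x\le\mathbb E_x[|f(X_t)|_{X_t}]=\mathrm{e}^{-tH_{g,\varrho}}|f|(x)$ is immediate because parallel transport is an isometry; I would likely present the analytic Kato-inequality proof as the main argument and remark on the probabilistic one.
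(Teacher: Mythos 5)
Your proposal is correct, but you present as your main argument the analytic Hess--Schrader--Uhlenbrock/Kato-inequality route, whereas the paper takes what you describe at the end as the ``alternative, essentially equivalent, route'': a covariant Feynman--Kac formula. Concretely, the paper writes $\mathrm{e}^{-tH^\nabla_{g,\varrho,V}}f(x)=\mathbb{E}\big[1_{\{t<\zeta(x)\}}\IAA_t(x)\pa_t^{-1}(x)f(\mathbb{X}_t(x))\big]$, where $\mathbb{X}(x)$ is the diffusion generated by $-\Delta_{g,\varrho}$, $\pa_t(x)$ is stochastic parallel transport (unitary since $\nabla$ is Hermitian), and $\IAA_t(x)$ solves a pathwise linear ODE driven by $V$; unitarity of $\pa_t(x)$, $V\geq 0$, and Gronwall give $|\IAA_t(x)|_x\leq 1$, and the semigroup domination drops out in one line. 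Your analytic plan is also sound and is the classical route of Hess--Schrader--Uhlenbrock and Simon; what it buys is independence from stochastic calculus and a more transparent reduction to Kato's first inequality $|\,\Id|f|\,|\leq|\nabla f|$, which, as you note, follows from the $\sqrt{|f|^2+\varepsilon^2}$ regularization and the metric compatibility of $\nabla$, with $\varrho$ entering only through the measure and hence causing no trouble. What the paper's probabilistic route buys is that the $\IL^1_{\mathrm{loc}}$ potential $V$ is absorbed directly into the multiplicative functional $\IAA$, so no separate Trotter/monotone-form-limit step is needed; the price is that one must first establish the Feynman--Kac formula for $\IL^1_{\mathrm{loc}}$ potentials (the paper cites \cite{G2} and sketches the mollification/cut-off argument). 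One small slip in your Trotter reduction: $|\mathrm{e}^{-s\hat V}\psi|_x=\mathrm{e}^{-sV(x)}|\psi|_x$ does not type-check since $V(x)$ is an endomorphism of $E_x$, not a scalar; the correct statement is $|\mathrm{e}^{-s\hat V}\psi(x)|_x=|\mathrm{e}^{-sV(x)}\psi(x)|_x\leq|\psi(x)|_x$, which holds because $V(x)$ is self-adjoint with nonnegative spectrum, and this inequality is all your argument needs.
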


\begin{proof} (Omitting $g$ and $\varrho$ in the notation) As in the unweighted case (see e.g. \cite{G1}), we can use a covariant Feynman-Kac formula for $\mathrm{e}^{-tH^{\nabla}_V}$: Let $(\mathbb{X}_t(x))_{t\geq 0}$ be a diffusion process which is generated by $-\Delta_{g,\varrho}$ (so $\varrho\equiv 1$ precisely corresponds to $g$-Brownian motion), issued to start from $x\in M$, let $\zeta(x)$ denote the random variable given by the lifetime of $X(x)$, and let
$$
\pa_t(x): E_{x}\longrightarrow E_{\mathbb{X}_t(x)},\>0\leq t<\zeta(x),
$$ 
denote the stochastic parallel transport w.r.t. $\nabla$ along the paths of $\mathbb{X}(x)$. The perturbation $V$ is taken into account as follows: It induces for $\mu$-a.e. $x\in M$ a pathwise linear process
$$
\IAA_t(x):E_{x}\longrightarrow E_{x},\>0\leq t<\zeta(x),
$$
given as the pathwise weak (= locally absolutely continuous) solution of
$$
\Id \IAA_t(x)=-\IAA_t(x)\left(\pa^{-1}_t(x)V(\mathbb{X}_t(x))\pa_t(x)\right)\Id t,\>\IAA_0(x)=\mathrm{id}_{E_{x}}.
$$
Then for all $t\geq 0$, $f\in\Gamma_{\mathsf{L}^2}(M,E;\Id\mu )$ and $\mu$-a.e. $x\in M$ one has
\begin{align}\label{gfd}
\mathrm{e}^{-tH^{\nabla}_V}f(x)=\mathbb{E}\Big[1_{\{t<\zeta(x)\}}\IAA_t(x)\pa^{-1}_t(x)f(\mathbb{X}_t(x)) \Big].
\end{align}
As $\nabla$ is unitary, $\pa_t(x)$ is almost surely unitary on the fibers, so that using $V\geq 0$ and Gronwall\rq{}s inequality one easily proves the almost surely valid bound
$$
\left|\IAA_t(x)\right|_{x}\leq 1.
$$
Thus, using once more that $\pa_t(x)$ is pathwise unitary, we arrive at the asserted estimate
$$
\left|\mathrm{e}^{-tH^{\nabla}_V}f(x)\right|_x\leq\mathbb{E}\Big[1_{\{t<\zeta(x)\}}|f(\mathbb{X}_t(x))|_{\mathbb{X}_t(x)}\Big]=\mathrm{e}^{-tH_p}|f|(x)=\int_M p(t,x,y) |f(y)|_y\Id\mu( y).
$$
The Feynman-Kac formula (\ref{gfd}) can be proved precisely as in \cite{G2}: Starting from the case $V$ bounded and smooth (cf. \cite{driver}), one first uses Friedrichs mollifiers to get the case $V$ bounded, and then a cut-off argument for the general case.
\end{proof}

\subsection{A relative compactness result for arbitrary weighted Riemannian manifolds}

The following definition is at the heart of this section:

\begin{Definition}\label{deee} Given $x\in M$, and $b_1,b_2>1$, let $r_{\mathrm{Eucl},g,\varrho}(x,b_1,b_2)$ be the supremum of all $r>0$ such that $B_g(x,r)$ is relatively compact and admits a chart with respect to which one has one has the following inequalities for all $y\in B_g (x,r)$, 
\begin{align}
&\f{1}{b_1}(\delta_{ij})\leq (g_{ij}(y)):= \left(g(\partial_i,\partial_j)(y)\right)\leq b_1 (\delta_{ij})\>\text{ as symmetric bilinear forms, and}\\
&\f{1}{b_2}\leq\varrho(y) \leq b_2. 
\end{align}
We call $r_{\mathrm{Eucl},g,\varrho}(x,b_1,b_2)$ \emph{the Euclidean radius of $ (M,g,\varrho)$ at $x$ with accuracy $(b_1,b_2)$.}
\end{Definition}

We collect some elementary properties of the Euclidean radius in the following 

\begin{Remark} It is clear that $r_{\mathrm{Eucl},g,\varrho}(x,b_1,b_2)\in (0,\infty]$, and it is cumbersome but elementary to check that the function
$$
M\longrightarrow (0,\epsilon],\>\>x\longmapsto \min(r_{\mathrm{Eucl},g,\varrho}(x,b_1,b_2),\epsilon) 
$$
is $1$-Lipschitz (w.r.t. to $g$), for every fixed $\epsilon>0$ (this follows from Proposition \ref{inj} in the appendix). In particular, 
$$
\inf_{x\in K}r_{\mathrm{Eucl},g,\varrho}(x,b_1,b_2)>0\>\>\text{ for every compact $K\subset M$.}
$$
\end{Remark}

The whole point of this definition is the following highly nontrivial fact, which is essentially due to A. Grigor'yan, and which relies on a parabolic $\mathsf{L}^2$-mean value inequality in combination with local Faber-Krahn inequalities:

\begin{Theorem}\label{gri} There exist a constant $C_1=C_1(m)>0$, which only depends on $m$ (and in particular not on $g$ or $\varrho$), such that for every $b_1,b_2>1$, $(t,x)\in (0,\infty)\times M$, $\epsilon_1>0, \epsilon_2>1$ one has
\begin{align*}
p_{g,\varrho}(t,x,x)\leq Cb_1^{m+4}b_2^{4/m+2}\min\big(t,\min(r_{\mathrm{Eucl},g,\varrho}(x,b_1,b_2),\epsilon_1)^2/\epsilon_2 \big)^{-m/2}.
\end{align*}
\end{Theorem}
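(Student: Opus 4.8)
The plan is to localize the problem near a fixed point $x\in M$ via a bi-Lipschitz chart, compare the weighted heat kernel $p_{g,\varrho}$ on $(M,g,\varrho)$ with a Euclidean heat kernel through monotonicity of heat kernels under the obvious domination of Dirichlet forms, and then feed a suitable Faber--Krahn inequality into the parabolic $\mathsf{L}^2$-mean value inequality (the version stated as \cite[Thm. 15.14]{buch}) to get the on-diagonal bound. Concretely: fix $b_1,b_2>1$, $\epsilon_1>0$, $\epsilon_2>1$, set $r:=\min(r_{\mathrm{Eucl},g,\varrho}(x,b_1,b_2),\epsilon_1)$, and let $U:=B_g(x,r)$, which by definition of the Euclidean radius is relatively compact and carries a chart $\Phi:U\to\IR^m$ in which $\frac1{b_1}(\delta_{ij})\le(g_{ij})\le b_1(\delta_{ij})$ and $\frac1{b_2}\le\varrho\le b_2$ on $U$.

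**First I would** record how the two-sided bounds on $(g_{ij})$ and on $\varrho$ give comparisons of all the relevant geometric quantities on $U$: the volume measure $\Id\mu_{g,\varrho}$ is comparable to Lebesgue measure $\Id x$ with constants $b_1^{\pm m/2}b_2^{\pm1}$; the Dirichlet energy $\int|\nabla^g u|^2\,\Id\mu_{g,\varrho}$ is comparable to $\int|\nabla u|^2\,\Id x$ with powers of $b_1$ and $b_2$; and the geodesic ball $B_g(x,s)$ contains (resp. is contained in) Euclidean balls of radius comparable to $s$. Transporting the classical Euclidean Faber--Krahn inequality $\lambda_1(\Omega)\ge c_m|\Omega|^{-2/m}$ through these comparisons yields a \emph{local} Faber--Krahn inequality for the weighted manifold valid for open $\Omega\Subset B_g(x,r/2)$ or so, of the form $\lambda_1^{(M)}(\Omega)\ge c_m'\,b_1^{-\kappa}b_2^{-\kappa'}\mu_{g,\varrho}(\Omega)^{-2/m}$ with explicitly trackable exponents $\kappa,\kappa'$. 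Then I would invoke the parabolic mean value inequality from \cite[Thm. 15.14]{buch}, which converts exactly such a local Faber--Krahn inequality into an on-diagonal upper bound $p_{g,\varrho}(t,x,x)\le C(m)\,(\text{FK constant})^{m/2}\,t^{-m/2}$ valid for $t$ up to a constant multiple of the square of the radius on which the Faber--Krahn inequality holds, i.e. for $t\lesssim r^2$.

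**Next I would** handle the two regimes in the $\min$. For $t\le r^2/\epsilon_2$ (say; the explicit threshold is where the bookkeeping of $\epsilon_2$ enters) the mean value inequality applies directly and gives $p_{g,\varrho}(t,x,x)\le C(m)b_1^{m+4}b_2^{4/m+2}\,t^{-m/2}$ after collecting the exponents from the volume and energy comparisons (these are the source of the powers $m+4$ and $4/m+2$; I would just make sure the Faber--Krahn constant to the power $m/2$ produces precisely these). For $t\ge r^2/\epsilon_2$, I would use the semigroup property and the fact that $t\mapsto p_{g,\varrho}(t,x,x)$ is non-increasing in $t$ (a standard consequence of $p(2s,x,x)=\|p(s,x,\cdot)\|_{\mu_{g,\varrho},2}^2$ together with (\ref{A1}), (\ref{A3})) to bound $p_{g,\varrho}(t,x,x)\le p_{g,\varrho}(r^2/\epsilon_2,x,x)\le C(m)b_1^{m+4}b_2^{4/m+2}(r^2/\epsilon_2)^{-m/2}$. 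Taking the larger of the two right-hand sides and observing that for every $t>0$ one has $\min(t,r^2/\epsilon_2)^{-m/2}$ dominating both, we arrive at $p_{g,\varrho}(t,x,x)\le C(m)b_1^{m+4}b_2^{4/m+2}\min(t,\min(r_{\mathrm{Eucl},g,\varrho}(x,b_1,b_2),\epsilon_1)^2/\epsilon_2)^{-m/2}$, which is the assertion.

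**The main obstacle** I expect is not any single estimate but the careful propagation of the distortion constants $b_1,b_2$ through every comparison step so that the final exponents are exactly $m+4$ and $4/m+2$ and the constant $C_1$ genuinely depends on $m$ alone; in particular one must check that the radius on which the local Faber--Krahn inequality holds is a fixed fraction of $r$ independent of $b_1,b_2$ (so that the admissible time range is $t\lesssim r^2$ with an $m$-only constant, which is where $\epsilon_2$ is absorbed), and that the ``inradius'' loss in passing from $B_g(x,r)$ to a sub-ball on which Lebesgue and $\mu_{g,\varrho}$ comparisons are clean does not introduce hidden dependence. A secondary technical point is making sure the hypotheses of \cite[Thm. 15.14]{buch} (relative compactness, smoothness, the precise form of the local Faber--Krahn assumption) are met, which is exactly why relative compactness of $B_g(x,r)$ was built into Definition \ref{deee}.
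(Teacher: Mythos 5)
Your proposal is correct and follows essentially the same route as the paper: transfer the Euclidean Faber--Krahn inequality to the weighted manifold on $B_g(x,r)$ via the two-sided chart bounds built into the definition of $r_{\mathrm{Eucl},g,\varrho}(x,b_1,b_2)$, then feed the resulting local Faber--Krahn constant $a\sim b_1^{-(m/2+2)}b_2^{-(2/m+1)}$ into the parabolic mean value inequality of \cite[Thm.\ 15.14]{buch}, which with $t_0=t$ and $R(x):=\min(r_{\mathrm{Eucl},g,\varrho}(x,b_1,b_2),\epsilon_1)/\sqrt{\epsilon_2}$ already produces the $a^{-2}\min(t,R(x)^2)^{-m/2}$ bound in one shot, so your explicit two-case split via monotonicity of $t\mapsto p(t,x,x)$ is a harmless variant. (The opening remark about comparing heat kernels directly via domination of Dirichlet forms is not what is actually used --- the comparison happens at the level of the Faber--Krahn constant, as your subsequent paragraphs correctly describe.)
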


\begin{proof} (Omitting $g$ and $\varrho$ in the notation) The proof is based on Theorem 15.14 in \cite{buch} (see also Theorem 15.4 therein): Let $R:M\to (0,\infty)$ be any function with the following properties: Any (Riemannian!) ball $B(x,R(x))$ is relatively compact, and there is a number $a>0$ such that for any $x\in M$ and any open $U\subset B(x,R(x))$ one has the uniform Faber-Krahn inequality
$$
\min \sigma(H\mid_U)\geq a \mu(U)^{-2/m}.
$$
Then Theorem 15.14 in \cite{buch} implies the existence of a $C'=C'(m)>0$ which only depends on $m$, such that for all $t\geq t_0>0$, $x,y\in M$, one has
\begin{align*}
p(t,x,y)\leq \frac{C\rq{}\left(1+\Id(x,y)^2/t\right)^{m/2}\exp\left(-\Id(x,y)^2/(4t)-(t-t_0)\min\sigma(H )\right)}{a^2\min\big(t_0,R(x)^2 \big)^{m/4}\min\big(t_0,R(y)^2\big )^{m/4}}.
\end{align*}
We claim that $$R(x):=\min(r_{\mathrm{Eucl}}(x,b_1,b_2),\epsilon_1)/\sqrt{\epsilon_2}$$ is such a function: Indeed, given an open subset $U\subset B(x,R(x))$ one has (with $H_{\mathrm{Eucl}}$ the unweighted Euclidean Laplace operator, $\mu_{\mathrm{Eucl}}$ the usual unweighted Lebesgue measure, and $(g^{kl})=(g_{kl})^{-1}$)
\begin{align*}
\min \sigma(H\mid_U)&=\inf_{\psi\in\mathsf{C}_{\mathrm{c}}^{\infty}(U)}\int_U \sum_{k,l}g^{kl}\cdot\overline{\partial_k\psi}\cdot\partial_l\psi\cdot\varrho\cdot\sqrt{\mathrm{det}(g)}\Id \mu_{\mathrm{Eucl}}\\
&\geq \f{1}{b_2b_1^{m/2+1}}\min \sigma(H_{\mathrm{Eucl}}\mid_U)\geq \f{C''}{b_2b_1^{m/2+1}}\mu_{\mathrm{Eucl}}(U)^{-2/m}\\
&\geq \f{C'' }{  b_2^{2/m+1}b_1^{m/2+2}}\mu(U)^{-2m}=:a \mu(U)^{-2m},
\end{align*}
where the existence of $C''=C''(m)>0$ follows from the Euclidean Faber-Krahn inequality, and the proof is complete.
\end{proof}

Now we can formulate the main result of this section:

\begin{Corollary}\label{has} Let $\nabla$ be a unitary covariant derivative on the smooth Hermitian vector bundle $E\to M$, and let $V\in\Gamma_{\mathsf{L}^1_{\mathrm{loc}}}(M,\mathrm{End}(E))$ be pointwise self-adjoint with nonnegative eigenvalues. Assume further that $W\in\Gamma(M,\mathrm{End}(E))$ can be decomposed as $W=W_1+W_2$, where $W_j\in \Gamma(M,\mathrm{End}(E))$ are pointwise self-adjoint with $|W_2|\in\mathsf{L}^{\infty}(M,\Id\mu_{g,\varrho})$, and $W_1$ satisfies the following assumptions:
\begin{enumerate}
\item[$\cdot$] In case $m\leq 3$ assume that there exist $\epsilon>0$, $b_1,b_2>1$, such that for all $c>0$, one has 
\begin{align}\label{anna}
\int_M \big(|W_1|^{2} +1_{\{|W_2|>c\}}\big)    \min(r_{\mathrm{Eucl},g,\varrho}(\cdot,b_1,b_2),\epsilon)^{-m}    \Id\mu_{g,\varrho} <\infty.
\end{align}
\item[$\cdot$] In case $m\geq 4$ assume that there exist $b_1,b_2>1$, $q\rq{}> m/2$, such that 
\begin{align*}
\inf r_{\mathrm{Eucl},g,\varrho}(\cdot,b_1,b_2)>0,\>\>\int_M \big(|W_1|^{q\rq{}} +1_{\{|W_2|>c\}}\big)       \Id\mu_{g,\varrho}<\infty\>\text{ for all $c>0$}.
\end{align*}
\end{enumerate}
Then one has 
$$
\hat{W}(H^{\nabla}_{g,\varrho,V}+a)^{-1}\in\ILL^{\infty}\big(\Gamma_{\mathsf{L}^2}(M,E;\Id\mu_{g,\varrho})\big)\>\text{ for all $a>0$.} 
$$
\end{Corollary}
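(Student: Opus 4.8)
The plan is to reduce Corollary \ref{has} to the abstract bundle-valued result, Corollary \ref{mag1}, by exhibiting an appropriate $\mathsf{L}^q$-control function for the scalar heat kernel $p_{g,\varrho}$ on $(M,g,\varrho)$ and by verifying the Kato domination $H^{\nabla}_{g,\varrho,V}\succeq H_{g,\varrho}$. The domination is already available: Lemma \ref{dlk} gives exactly $H^{\nabla}_{g,\varrho,V}\succeq H_{g,\varrho}$ for $0\leq V\in\Gamma_{\mathsf{L}^1_{\mathrm{loc}}}(M,\mathrm{End}(E))$, which is precisely our hypothesis on $V$. So the real work is to translate the geometric integrability assumptions on $W_1,W_2$ into the measure-theoretic hypotheses (\ref{cond}) or (\ref{cond2}) of Theorem \ref{main}/Corollary \ref{mag1}, using the heat kernel bound of Theorem \ref{gri}.

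First I would fix $b_1,b_2>1$ (and, when $m\leq 3$, the $\epsilon=\epsilon_1>0$) as supplied by the hypothesis, and set $\epsilon_2:=2$ (say) in Theorem \ref{gri}. That theorem then yields, for a constant $C$ depending only on $m$,
\begin{align*}
p_{g,\varrho}(t,x,x)\leq C\,b_1^{m+4}b_2^{4/m+2}\,\min\!\big(t,\tfrac12\min(r_{\mathrm{Eucl},g,\varrho}(x,b_1,b_2),\epsilon)^2\big)^{-m/2}.
\end{align*}
Now split according to whether $t\leq \tfrac12 R(x)^2$ or not, where $R(x):=\min(r_{\mathrm{Eucl},g,\varrho}(x,b_1,b_2),\epsilon)$: in the first regime the right side is a constant times $t^{-m/2}$, in the second it is a constant times $R(x)^{-m}$. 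Using $\min(a,b)^{-m/2}\leq a^{-m/2}+b^{-m/2}$, one obtains the clean factored bound
\begin{align*}
p_{g,\varrho}(t,x,x)\leq C'\,R(x)^{-m}\big(t^{-m/2}+1\big)=:F_1(x)F_2(t),\qquad F_1:=C'R^{-m},\ F_2(t):=t^{-m/2}+1,
\end{align*}
valid because $R(x)\leq\epsilon$ is bounded so $R(x)^{-m}\geq \epsilon^{-m}$ absorbs the constants. Then I check the integrability condition (\ref{ak}): $\int_0^\infty \mathrm{e}^{-t}(t^{-m/2}+1)^{1/(2q)}\,\Id t<\infty$ iff $m/(2q)<1$, i.e. $q>m/2$. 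For $m\leq 3$ one can take $q=1$ (since $m/2<2$, indeed $m/2\le 3/2<2q$ works, and in fact $q=1$ requires $m<2q=2$, so for $m\leq 3$ one needs $m/(2)<1$... careful: $q=1$ works only for $m=1$; for $m=2,3$ one still needs $q>m/2$, so $q=1$ fails for $m\ge 2$). Here is where I would instead note that in the $m\leq 3$ case the control function is genuinely nontrivial ($F_1\not\equiv 1$), so condition (\ref{cond}) with $q=1$ is what is needed — but (\ref{ak}) with $q=1$ forces $m=1$. The resolution used in the paper is surely different: one keeps $F_1=C'R^{-m}$ and notes that $\int_0^\infty \mathrm{e}^{-t}F_2(t)^{1/2}\,\Id t<\infty$ holds precisely when $m/2<2$, i.e. $m\leq 3$; that is exactly $m<4$. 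So for $m\leq 3$, $q=1$ is admissible with this $F_1$, and $W_1\in\mathsf{L}^2(M,F_1\,\Id\mu_{g,\varrho})$ is exactly assumption (\ref{anna}) for the $|W_1|^2$ term, while the $1_{\{|W_2|>c\}}$-integrability says $|W_2|\in\mathsf{L}_\infty(M,F_1\,\Id\mu_{g,\varrho})$, and $|W_2|\in\mathsf{L}^\infty$ is assumed, giving $|W_2|\in\mathsf{L}^\infty_\infty(M,\Id\mu_{g,\varrho})\cap\mathsf{L}_\infty(M,F_1\,\Id\mu_{g,\varrho})$.

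For $m\geq 4$ the hypothesis gives $\inf_x R(x)>0$, so $F_1=C'R^{-m}$ is a bounded function and, by the remark after Definition \ref{gaus}, one may rescale $F_2$ and take $F_1\equiv 1$; then (\ref{ak}) needs $q>m/2$, which is guaranteed by choosing $q=q'>m/2$ from the hypothesis, and $W_1\in\mathsf{L}^{2q}(M,\Id\mu_{g,\varrho})$ — wait, the hypothesis has $|W_1|^{q'}$ integrable, so one sets $2q=q'$, i.e. $q=q'/2>m/4$, which still needs $q>m/2$; so in fact one should take $q:=q'$ and note $|W_1|\in\mathsf{L}^{q'}$ gives $|W_1|\in\mathsf{L}^{2q}$ only after identifying $2q=q'$ — I would simply match the exponent bookkeeping so that (\ref{cond2}) reads $|W_1|\in\mathsf{L}^{2q}$ with $2q=q'>m/2$, using that $q>m/4$ suffices once $F_2(t)^{1/(2q)}$ is integrable against $\mathrm{e}^{-t}$, which requires $m/(2\cdot 2q)<1$... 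The main obstacle, and the step I would be most careful about, is exactly this exponent matching between Theorem \ref{gri}'s bound, the integrability threshold in (\ref{ak}), and the $\mathsf{L}^p$-hypotheses on $W_1$; once the constants $b_1,b_2,\epsilon,\epsilon_2$ are frozen and the two regimes ($F_1$ nontrivial for $m\le 3$ vs. $F_1\equiv 1$ for $m\geq 4$) are separated, everything else — self-adjointness of $\hat W$, the decomposition $W=W_1+W_2$, and the conclusion $\hat W(H^{\nabla}_{g,\varrho,V}+a)^{-1}\in\ILL^\infty$ with the stated spectral consequences — follows immediately by invoking Corollary \ref{mag1} with the control function $F_1$ just constructed and the domination from Lemma \ref{dlk}.
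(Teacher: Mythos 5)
Your proposal follows the same route as the paper: fix $b_1,b_2,\epsilon$ (and $\epsilon_2=2$) in Theorem~\ref{gri}, extract a factored bound $p_{g,\varrho}(t,x,x)\leq F_1(x)F_2(t)$ with $F_1=C'R^{-m}$ and $F_2(t)=t^{-m/2}+1$, invoke Lemma~\ref{dlk} for the domination $H^\nabla_{g,\varrho,V}\succeq H_{g,\varrho}$, and feed everything into Corollary~\ref{mag1} — with $q=1$ when $m\leq 3$ and $F_1\equiv 1$, $q=q'/2>m/4$ when $m\geq 4$. Two remarks. First, the exponent bookkeeping in your draft is noticeably error-prone: you repeatedly assert that (\ref{ak}) with $F_2\sim t^{-m/2}$ near $0$ requires $q>m/2$, when in fact $F_2(t)^{1/(2q)}\sim t^{-m/(4q)}$ is integrable near $0$ iff $q>m/4$; you do eventually self-correct to $m<4$ for $q=1$ and $q=q'/2>m/4$, but the intermediate reasoning ("$q=1$ works only for $m=1$") is simply wrong and would be fatal if left uncorrected. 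Second, to apply Corollary~\ref{mag1} with $q=1$ you need $|W_2|\in\mathsf{L}^\infty_\infty(M,\Id\mu_{g,\varrho})\cap\mathsf{L}_\infty(M,F_1\Id\mu_{g,\varrho})$; the hypothesis (\ref{anna}) only gives $\mu_{F_1}\{|W_2|>c\}<\infty$, i.e.\ $|W_2|\in\mathsf{L}_\infty(M,F_1\Id\mu_{g,\varrho})$. You state $|W_2|\in\mathsf{L}^\infty_\infty(M,\Id\mu_{g,\varrho})$ but skip the step explaining \emph{why} $|W_2|$ also vanishes at infinity with respect to $\mu_{g,\varrho}$ itself: this uses precisely $\inf F_1\geq\epsilon^{-m}>0$ (the same observation you used to absorb constants in the factorization), since then $\mu_{g,\varrho}\{|W_2|>c\}\leq (\inf F_1)^{-1}\mu_{F_1}\{|W_2|>c\}<\infty$. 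With those two points tidied up, the argument coincides with the paper's.
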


\begin{proof} Using Theorem \ref{gri} with $\epsilon_1=\epsilon$, $\epsilon_2=2$, implies the estimate
\begin{align*}
p_{g,\varrho}(t,x,x)\leq C\rq{}\min\big(t,\min(r_{\mathrm{Eucl},g,\varrho}(x,b_1,b_2),\epsilon)^2\big)^{-m/2},
\end{align*}
where $C\rq{}=C\rq{}(m,b_1,b_2)>0$. We will use Corollary \ref{mag1} with $\tilde{H}=H^{\nabla}_{g,\varrho,V}$ and appropriate choices of $F_j$ and $q$:\\
Case $m\leq 3$: We have
\begin{align*}
p_{g,\varrho}(t,x,x)\leq f_2(t)+F_1(x):=\frac{C\rq{}\rq{}}{t^{m/2}}+\frac{C\rq{}\rq{}}{\min(r_{\mathrm{Eucl},g,\varrho}(x,b_1,b_2),\epsilon)^m}.
\end{align*}
This can be estimated as follows
$$
f_2(t)+F_1(x)= F_1(x)(1+ f_2(t)/F_1(x))\leq F_1(x)\cdot\big(1+ (\inf F_1)f_2(t)\big)=:F_1(x)\cdot F_2(t),
$$
where clearly $\inf F_1>0$ by construction. Here the continuous function 
$$
F_2(t)^{1/2}=\big(1+ C\rq{}\rq{}\rq{}t^{-m/2}\big)^{1/2},\>\>t>0,
$$
is integrable near $0$ and bounded at $\infty$, so that $F_1$ is an $\IL^1$-control function, and (\ref{anna}) in combination with $\inf F_1>0$ implies $|W_2|\in\mathsf{L}_{\infty}(M,\Id\mu_{g,\varrho})$, so that the claim follows from using Corollary \ref{mag1} with $q=1$.\\
Case $m\geq 4$: In view of the assumption $\inf_{x\in M}r_{\mathrm{Eucl}}(x,b_1,b_2)>0$ we have the heat kernel estimate,
$$
p_{g,\varrho}(t,x,x)\leq \frac{C\rq{}}{\min(t,C\rq{}\rq{}\rq{})^{m/2}}=:F_2(t).
$$
Here for every $q>  m/4$ the continuous function $F_2(t)^{1/(2q)}$, $t>0$, is integrable near $0$ and bounded at $\infty$, so that $F_1\equiv 1$ is an $\mathsf{L}^q$-control function for every $q>m/4$ and the claim follows from using Corollary \ref{mag1} with $q=q\rq{}/2$. This completes the proof.
\end{proof}

We close this section with a very general Euclidean radius estimate of $(M,g,\varrho)$, which is not only of interest in connection with Theorem \ref{has}: In combination with Theorem \ref{gri}, Proposition \ref{below} below leads to entirely new weighted heat kernel estimates which do not require any global absolute bounds on the curvature:

\begin{Proposition}\label{below} Assume that there is a function $\beta:M\to (0,\infty)$ and a number $c_{\beta}<\infty$ such that for all $x,y\in M$,
\begin{align}\label{lipps}
\mathrm{Ric}_g(x)\geq - \beta(x)^{-2},\>\>\left|\beta(x)^{-2}-\beta(y)^{-2}\right| \leq c_{\beta}\Id_g(x,y) 
\end{align}
(in other words, the eigenvalues of $\mathrm{Ric}_g$ are pointwise bounded from below by a Lipschitz function). Then for any $b_1>1$ there exists a $C\rq{}=C\rq{}(m,b_1)>0$ such that for all $b_2>1$, $\delta>0$, $x\in M$, 
\begin{align*}
r_{\mathrm{Eucl},g,\varrho}(x,b_1,b_2)\geq C\rq{}  \min\Big(1\>,\> r_{\varrho}(x,b_2) \> , \> \min\big(\delta,r_{\mathrm{inj},g}(x)\big) \> , \>\  \beta(x)(1+c_{\beta})^{-1/2} \Big),
\end{align*}
where
$$
r_{g,\varrho}(x,b_2):=\sup\Big\{r\left|r>0, \> \text{ \emph{  $1/b_2\leq \varrho(y)\leq b_2$ for all $y\in B_g(x,r)$}}\Big\}\right. .
$$
\end{Proposition}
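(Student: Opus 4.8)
The plan is to reduce the lower bound on $r_{\mathrm{Eucl},g,\varrho}(x,b_1,b_2)$ to two separate tasks: controlling the metric distortion $(g_{ij})$ near $x$, and controlling the weight $\varrho$ near $x$. The weight part is essentially definitional: $r_{g,\varrho}(x,b_2)$ is by construction the largest radius on which $1/b_2\le\varrho\le b_2$, so any $r\le r_{g,\varrho}(x,b_2)$ automatically satisfies the second inequality in Definition \ref{deee}. Hence the real work is to show that on a ball of radius comparable to $\min\big(1,\min(\delta,r_{\mathrm{inj},g}(x)),\beta(x)(1+c_\beta)^{-1/2}\big)$ one can find a chart (geodesic normal coordinates at $x$ are the natural candidate) in which $\frac{1}{b_1}(\delta_{ij})\le(g_{ij})\le b_1(\delta_{ij})$.

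First I would fix $b_1>1$ and work in geodesic normal coordinates centered at $x$, which are defined and form a genuine chart out to radius $r_{\mathrm{inj},g}(x)$. The key quantitative input is a comparison estimate for the metric components in normal coordinates in terms of a lower Ricci bound (or more classically a two-sided sectional curvature bound, but here one wants to use only $\mathrm{Ric}_g\ge -\beta(x)^{-2}$ together with the Lipschitz hypothesis \eqref{lipps}): the point of the Lipschitz condition is that on a ball $B_g(x,\rho)$ with $\rho\le\beta(x)(1+c_\beta)^{-1/2}$ the Ricci curvature stays bounded below by a constant multiple of $-\beta(x)^{-2}$, since for $y\in B_g(x,\rho)$ one has $\beta(y)^{-2}\le\beta(x)^{-2}+c_\beta\rho$, and choosing $\rho$ as above keeps $\beta(y)^{-2}\le(1+c_\beta)\beta(x)^{-2}\cdot\rho/\rho\le\ldots$, i.e. comparable to $\beta(x)^{-2}$. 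So on that ball one has a uniform lower Ricci bound of the form $-K^2$ with $K\asymp\beta(x)^{-1}(1+c_\beta)^{1/2}$. Standard Bishop–Gromov / Hessian comparison arguments (for the distance function from $x$) then give that in normal coordinates the eigenvalues of $(g_{ij}(y))$ lie in $[\Phi(K|y|)^{-1},\Phi(K|y|)]$ for an explicit continuous $\Phi$ with $\Phi(0)=1$; this is where I would cite a standard reference (e.g. the appendix result \texttt{Proposition \ref{inj}} is alluded to, and one could invoke Croke-type or Heintze–Karcher estimates). Since $\Phi$ is continuous with $\Phi(0)=1$, there is $s_0=s_0(m,b_1)>0$ with $\Phi(s)\le b_1$ for $s\le s_0$; thus the metric inequality holds on $B_g(x,r)$ as soon as $K r\le s_0$, i.e. $r\le s_0/K \asymp s_0\,\beta(x)(1+c_\beta)^{-1/2}$.

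Assembling: take $r$ to be a small constant $C'=C'(m,b_1)$ times the minimum of $1$ (to stay in a regime where $\Phi$ is controlled and to keep the ball relatively compact in the non-complete case), $\min(\delta,r_{\mathrm{inj},g}(x))$ (so normal coordinates are a valid chart and $B_g(x,r)$ is relatively compact — the $\delta$ truncation guarantees relative compactness even when the injectivity radius is infinite), $\beta(x)(1+c_\beta)^{-1/2}$ (to have the uniform Ricci bound and then the metric pinching), and $r_{g,\varrho}(x,b_2)$ (to get the weight pinching). For such $r$ all requirements in Definition \ref{deee} are met, hence $r_{\mathrm{Eucl},g,\varrho}(x,b_1,b_2)\ge r$, which is exactly the claimed inequality. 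I expect the main obstacle to be the bookkeeping in the metric-comparison step: one must phrase the classical normal-coordinate estimates using only a \emph{lower} Ricci bound (a genuine two-sided control on $(g_{ij})$ from a one-sided Ricci bound requires the Bishop-type volume/Jacobian comparison plus the observation that the determinant and the individual eigenvalues can be pinched together once the radius is small relative to $K^{-1}$), and one must make sure every constant depends only on $(m,b_1)$ and not on the point $x$ or on $g,\varrho$ globally — which is precisely what the pointwise Lipschitz hypothesis \eqref{lipps} is engineered to allow. The relative-compactness clause is handled for free by the $\min(\delta,r_{\mathrm{inj},g}(x))$ factor, since a metric ball of radius strictly less than the injectivity radius whose closure has radius $\le\delta<\infty$ is precompact (and closed geodesic balls of radius below $r_{\mathrm{inj}}$ are diffeomorphic images of closed Euclidean balls).
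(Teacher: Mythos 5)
Your handling of the weight part via $r_\varrho(x,b_2)$, the use of $\min(\delta,r_{\mathrm{inj},g}(x))$ to guarantee relative compactness, and the observation that the Lipschitz hypothesis in (\ref{lipps}) converts the pointwise lower Ricci bound into a \emph{uniform} bound $\mathrm{Ric}_g\geq -(1+c_\beta)\beta(x)^{-2}$ on a ball of radius $\sim\beta(x)(1+c_\beta)^{-1/2}$ are all correct and match the paper's intent. The gap is in the central metric-pinching step. You claim that a lower Ricci bound (plus injectivity radius control) produces, via Bishop--Gromov/Hessian comparison, a two-sided eigenvalue pinching $\frac{1}{b_1}\delta_{ij}\leq g_{ij}\leq b_1\delta_{ij}$ in \emph{geodesic normal coordinates}. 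This is false. A one-sided Ricci bound $\mathrm{Ric}\geq -K^2$ controls the Jacobian determinant of $\exp_x$ from above only, gives no lower bound on it, and even a two-sided determinant bound does not pinch the individual eigenvalues of $(g_{ij})$ (consider $\mathrm{diag}(\lambda,\lambda^{-1})$ with $\lambda$ large). Pointwise eigenvalue pinching in normal coordinates is a Rauch/Jacobi-field statement and needs two-sided \emph{sectional} curvature bounds, which are nowhere assumed. Your closing remark that \lq\lq{}the determinant and the individual eigenvalues can be pinched together once the radius is small relative to $K^{-1}$\rq\rq{} is precisely the assertion that has no proof in this generality, and it is where the argument breaks.

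The paper sidesteps this entirely by not using geodesic normal coordinates. It quotes the Hempel--Post--Weder lower bound on the harmonic $\mathsf{W}^{1,l}$-Sobolev radius $r_{\mathrm{harm},g}(x,l,b_1)$ (Propositions 2.3 and 2.5 of \cite{HPW}), which is the Anderson-type statement that lower Ricci plus lower injectivity radius give $\mathsf{C}^{1,\alpha}$ --- in particular eigenvalue-pinched --- control of $g$ in \emph{harmonic} coordinates; the Lipschitz continuity of $\beta^{-2}$ and of the $\delta$-capped injectivity radius (Proposition \ref{inj}) is exactly the input needed to make the HPW bound pointwise. One then concludes by the trivial inequality $r_{\mathrm{Eucl},g,\varrho}(x,b_1,b_2)\geq \min\big(r_{\mathrm{harm},g}(x,l,b_1),\,r_\varrho(x,b_2)\big)$, since harmonic coordinate charts qualify as charts in Definition \ref{deee}. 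In short: the correct chart to use is harmonic, not normal, and the metric control there comes from elliptic regularity (harmonic coordinate theory), not from comparison geometry.
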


\begin{proof} As the $\delta$-capped injectivity radius is $1$-Lipschitz by Proposition \ref{inj} (appendix), we can use \cite[Proposition 2.5]{HPW} in combination with \cite[Proposition 2.3]{HPW}, to estimate for any $l>m$ the harmonic $\mathsf{W}^{1,l}$-Sobolev radius of $g$ with Euclidean distortion $b_1>1$, from below at each $x$ by
$$
r_{\mathrm{harm},g}(x,l,b_1)\geq C\rq{} \rq{} \min\Big(1 \ , \ \min\big(\delta,r_{\mathrm{inj},g}(x)\big)/2 \ , \  \beta(x)(1+c_{\beta})^{-1/2}\Big)
$$
for some $C\rq{}\rq{}=C\rq{}\rq{}(m,b_1,l)>0$, where we remark that the restriction $\beta \leq  1$ as well as $\beta,r\in\mathsf{C}^1(M)$ in the statement of \cite[Proposition 2.5]{HPW} are not used in its proof. What is used are the bounds (\ref{lipps}), and that $r(x):=\min\big(\delta,r_{\mathrm{inj},g}(x)\big)$ is Lipschitz w.r.t $g$.\\
Noting that for any $l$ one trivially has
$$
r_{\mathrm{Eucl},g}(x,b_1,b_2)=r_{\mathrm{Eucl},g,\varrho\rq{}}(x,b_1,b_2)\mid_{\varrho\rq{}\equiv 1}\>\geq r_{\mathrm{harm},g}(x,l,b_1)
$$
completes the proof.
\end{proof}

\subsection{A relative compactness result for weighted Riemannian manifolds with nonnegative weighted $\alpha$-Bakry-\'{E}mery tensor}

The purpose of this section is to show that under some lower curvature bounds, one can construct more explicit control functions for the heat kernel. To this end, we start with

\begin{Definition}\label{bakry}
Given $\alpha>0$, the \emph{$\alpha$-Bakry-\'{E}mery tensor of $(M,g,\varrho)$} is the smooth symmetric section in $T^*M\otimes T^* M\to M$, defined on smooth vector fields $A$,$B$ by
$$
\mathrm{Ric}_{g,\varrho,\alpha}(A,B):=\mathrm{Ric}_{g}(A,B)-\mathrm{Hess}_g[\log(\varrho)](A,B)-\f{1}{\alpha}\Id [\log(\varrho)](A)\cdot\Id[ \log(\varrho)](B),
$$
that is,
$$
\mathrm{Ric}_{g,\varrho,\alpha}=\mathrm{Ric}_{g}-\mathrm{Hess}_g[\log(\varrho)]-\f{1}{\alpha}\Id [\log(\varrho)]\otimes \Id[ \log(\varrho)],
$$
where $\Id[ \log(\varrho)]$ denotes the exterior differential of the function $\log(\varrho)$.
\end{Definition}

The name of $\mathrm{Ric}_{g,\varrho,\alpha}$ refers to the seminal paper \cite{bakry}, where a Ricci type curvature is associated to a general class of diffusion operators. Actually, the original definition given in \cite{bakry} corresponds in our situation to the \emph{$\infty$-Bakry-\'{E}mery tensor} which is given by 
$$
\mathrm{Ric}_{g,\varrho,\infty}(A,B):=\mathrm{Ric}_{g}(A,B)-\mathrm{Hess}_g[\log(\varrho)](A,B),
$$
while our definition of $\mathrm{Ric}_{g,\varrho,\alpha}$ is the one taken from \cite{qian} (see also \cite{lott}).\\
Now if one intends to generalize classical (= unweighted) Riemannian results that rely on nonnegative Ricci curvature to the weighted case, it turns out that for some results lower bounds on $\mathrm{Ric}_{g,\varrho,\infty}$ are enough, whereas for other results one needs lower bounds on $\mathrm{Ric}_{g,\varrho,\alpha}$ for some $\alpha<\infty$, a stronger assumption. This shows in particular that the weighted situation really leads to some mathematical subtleties. For example, it is shown in \cite{qian} that a weighted analog of the Myers\rq{} compactness result requires an assumptions of the form $\mathrm{Ric}_{g,\varrho,\alpha}\geq C>0$ for some positive $C$, $\alpha$ (in addition to the usual assumptions of completeness and connectedness) to hold true, and that indeed $\mathrm{Ric}_{g,\varrho,\infty}\geq C$ is not enough to conclude compactness. We refer the reader also to \cite{lott} for further (topological) investigations in this context, which rely on both, $\mathrm{Ric}_{g,\varrho,\alpha}$ and $\mathrm{Ric}_{g,\varrho,\infty}$.\\
Fur our purposes $\mathrm{Ric}_{g,\varrho,\alpha}$ is the more natural object, for it leads to Li-Yau type heat kernel bounds that do not require any absolute control on the derivative of the weight function (cf. \cite{chop} for a discussion of this). Namely, under geodesic completeness and $\mathrm{Ric}_{g,\varrho,\alpha}\geq 0$ for some $\alpha$, that is, $\mathrm{Ric}_{g,\varrho,\alpha}(A,A)\geq 0$ for all smooth vector fields $A$ on $M$, one can pick very explicit control functions:

\begin{Theorem}\label{ghhp} Assume that $(M,g)$ is geodesically complete and that $\mathrm{Ric}_{g,\varrho,\alpha}\geq 0$ for some $\alpha>0$. Then there exists a constant $C=C(m,\alpha)>0$ which only depends on $m$, $\alpha$, such that for all $\epsilon>0$, $(t,x)\in (0,\infty)\times M$ one has
$$
p_{g,\varrho}(t,x,x)\leq   C \mu_{g,\varrho}\big(B_g(x,\sqrt{t})\big)^{-1}\leq C\cdot \mu_{g,\varrho}\big(B_g(x,\epsilon)\big)^{-1}\cdot\big(F_{m,\alpha,\epsilon}(\sqrt{t}) +1\big),
$$
where for every $\beta\geq 0$, $R>0$, we have set
$$
F_{m,\beta,R}:(0,\infty)\longrightarrow (0,\infty),\>\>F_{m,\beta,R}(r):=2^{2m+2\beta}R^{m+\beta}r^{-(m+\beta)}.
$$
\end{Theorem}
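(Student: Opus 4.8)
The estimate has two parts: the on-diagonal Li--Yau bound $p_{g,\varrho}(t,x,x)\le C\,\mu_{g,\varrho}(B_g(x,\sqrt t))^{-1}$, and the conversion of this into a control function of the form $F_1(x)\cdot F_2(t)$ via a volume-comparison argument. The first part I would simply invoke: under geodesic completeness and $\mathrm{Ric}_{g,\varrho,\alpha}\ge 0$, the weighted manifold $(M,g,\varrho)$ satisfies the Bishop--Gromov-type volume-doubling property with the effective dimension $m+\alpha$, together with a scaled weighted Poincar\'e inequality on balls; by the Grigor'yan--Saloff-Coste characterization these two properties are equivalent to the two-sided Li--Yau Gaussian heat kernel estimate, whose on-diagonal specialization at $y=x$ is exactly $p_{g,\varrho}(t,x,x)\le C\,\mu_{g,\varrho}(B_g(x,\sqrt t))^{-1}$ with $C=C(m,\alpha)$. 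I would cite the relevant statements (the discussion in Section \ref{dhjkp}, or the references \cite{bakry}, \cite{qian}, \cite{lott}, \cite{chop}) rather than reprove them.

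\textbf{Second part.} It remains to bound $\mu_{g,\varrho}(B_g(x,\sqrt t))^{-1}$ by $\mu_{g,\varrho}(B_g(x,\epsilon))^{-1}\cdot(F_{m,\alpha,\epsilon}(\sqrt t)+1)$ for a fixed $\epsilon>0$. The key input is the weighted Bishop--Gromov volume comparison: since $\mathrm{Ric}_{g,\varrho,\alpha}\ge 0$, the function $r\mapsto \mu_{g,\varrho}(B_g(x,r))/r^{m+\alpha}$ is nonincreasing, hence for $0<r\le R$ one has
\begin{align*}
\frac{\mu_{g,\varrho}(B_g(x,R))}{\mu_{g,\varrho}(B_g(x,r))}\le \left(\frac{R}{r}\right)^{m+\alpha}.
\end{align*}
I split into the two regimes $\sqrt t\le \epsilon$ and $\sqrt t>\epsilon$. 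When $\sqrt t\le\epsilon$, monotonicity of $r\mapsto\mu_{g,\varrho}(B_g(x,r))$ in the trivial direction gives $\mu_{g,\varrho}(B_g(x,\sqrt t))\le \mu_{g,\varrho}(B_g(x,\epsilon))$, so $\mu_{g,\varrho}(B_g(x,\sqrt t))^{-1}\le \mu_{g,\varrho}(B_g(x,\epsilon))^{-1}$, which is absorbed by the ``$+1$''. When $\sqrt t>\epsilon$, I would like to compare upward; but doubling only goes the efficient way downward, so instead I compare $B_g(x,\epsilon)$ to $B_g(x,\sqrt t)$: from the displayed inequality with $r=\epsilon$, $R=\sqrt t$ one gets $\mu_{g,\varrho}(B_g(x,\sqrt t))\ge (\epsilon/\sqrt t)^{m+\alpha}\,\mu_{g,\varrho}(B_g(x,\epsilon))$? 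That is the wrong direction. The correct use is the reverse: $\mu_{g,\varrho}(B_g(x,\epsilon))/\epsilon^{m+\alpha}\ge \mu_{g,\varrho}(B_g(x,\sqrt t))/(\sqrt t)^{m+\alpha}$, i.e.
\begin{align*}
\mu_{g,\varrho}\big(B_g(x,\sqrt t)\big)^{-1}\le \left(\frac{\sqrt t}{\epsilon}\right)^{m+\alpha}\mu_{g,\varrho}\big(B_g(x,\epsilon)\big)^{-1}\le F_{m,\alpha,\epsilon}(\sqrt t)\,\mu_{g,\varrho}\big(B_g(x,\epsilon)\big)^{-1},
\end{align*}
since $F_{m,\alpha,\epsilon}(\sqrt t)=2^{2m+2\alpha}\epsilon^{m+\alpha}(\sqrt t)^{-(m+\alpha)}$ — wait, this is decreasing in $t$, so it dominates $(\sqrt t/\epsilon)^{m+\alpha}$ only when $\sqrt t\le 2^{2}\epsilon$ roughly, not for all large $t$. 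Hence the right reading is that $F_{m,\alpha,\epsilon}$ is meant with the roles arranged so that $F_{m,\alpha,\epsilon}(r)$ is large for \emph{small} $r$; so the controlled regime is $\sqrt t$ small and the ``$+1$'' handles $\sqrt t$ large. Concretely: for $\sqrt t\le\epsilon$ use $\mu_{g,\varrho}(B_g(x,\epsilon))/\mu_{g,\varrho}(B_g(x,\sqrt t))\le(\epsilon/\sqrt t)^{m+\alpha}\le 2^{2m+2\alpha}\epsilon^{m+\alpha}(\sqrt t)^{-(m+\alpha)}=F_{m,\alpha,\epsilon}(\sqrt t)$ by downward doubling; for $\sqrt t>\epsilon$ use $\mu_{g,\varrho}(B_g(x,\sqrt t))^{-1}\le\mu_{g,\varrho}(B_g(x,\epsilon))^{-1}$, absorbed by $+1$. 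In both cases $p_{g,\varrho}(t,x,x)\le C\mu_{g,\varrho}(B_g(x,\sqrt t))^{-1}\le C\mu_{g,\varrho}(B_g(x,\epsilon))^{-1}(F_{m,\alpha,\epsilon}(\sqrt t)+1)$, which sets $F_1(x)=C\mu_{g,\varrho}(B_g(x,\epsilon))^{-1}$ and $F_2(t)=F_{m,\alpha,\epsilon}(\sqrt t)+1$.

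\textbf{Main obstacle.} The only genuinely nontrivial ingredient is the weighted Li--Yau on-diagonal bound, which rests on the equivalence (weighted volume doubling of dimension $m+\alpha$ $+$ weighted Poincar\'e) $\Leftrightarrow$ (two-sided Li--Yau heat kernel bound); verifying that $\mathrm{Ric}_{g,\varrho,\alpha}\ge 0$ with geodesic completeness indeed yields both the weighted Bishop--Gromov comparison with exponent $m+\alpha$ and a scaled weighted Neumann--Poincar\'e inequality on balls is the content one must import (or refer to Section \ref{dhjkp}). Everything after that — the case split on $\sqrt t\lessgtr\epsilon$ and the bookkeeping of the constant $2^{2m+2\alpha}$, which is simply the doubling constant of a comparison model of dimension $m+\alpha$ — is elementary and I would dispatch it in a few lines as above.
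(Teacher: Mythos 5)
Your proof is correct and takes essentially the same route as the paper: invoke the weighted Li--Yau on-diagonal bound $p_{g,\varrho}(t,x,x)\leq C\,\mu_{g,\varrho}(B_g(x,\sqrt t))^{-1}$ under $\mathrm{Ric}_{g,\varrho,\alpha}\geq 0$ (the paper cites \cite{chop} for exactly this), then run a volume-doubling argument with a case split around $\sqrt t=\epsilon$ and let the ``$+1$'' absorb the large-time regime. The one small difference is cosmetic: you appeal to the sharp Bishop--Gromov monotonicity of $r\mapsto\mu_{g,\varrho}(B_g(x,r))/r^{m+\alpha}$ to get $(\epsilon/\sqrt t)^{m+\alpha}$ directly and then observe $2^{2m+2\alpha}\geq1$, whereas the paper starts from the coarser single-step doubling $\mu_{g,\varrho}(B_g(x,2R))\leq 2^{m+\alpha}\mu_{g,\varrho}(B_g(x,R))$ of Qian \cite{qian} and iterates it (the standard argument on p.\,115 of \cite{saloff}), which is precisely the provenance of the factor $2^{2m+2\alpha}$ in $F_{m,\alpha,\epsilon}$. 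Both routes yield the claimed bound; yours would support a marginally sharper constant. Two cosmetic notes: the paper's proof works with $\epsilon$ as a time and so produces $B_g(x,\sqrt\epsilon)$ and $F_{m,\alpha,\sqrt\epsilon}$ rather than $B_g(x,\epsilon)$ and $F_{m,\alpha,\epsilon}$, but since $\epsilon>0$ is arbitrary this is a relabeling; and the ``wrong-direction'' detour in the middle of your write-up is just a trace of working it out and should be pruned, since the final case split $\sqrt t\leq\epsilon$ versus $\sqrt t>\epsilon$ is exactly what the paper does.
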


\begin{proof} We start from the heat kernel bound 
$$
p_{g,\varrho}(t,x,y)\leq C_1(m,\alpha) \mu_{g,\varrho}\big(B_g(x,\sqrt{t})\big)^{-1}\mathrm{e}^{-\f{\Id_g(x,y)^2}{C_2(m,\alpha)t}},\>\>(t,x,y)\in (0,\infty)\times M\times M,
$$
which is a generalization of the classical Li-Yau bound for heat kernels with nonnegative Ricci tensor to the weighted case, and which can be found in \cite{chop}. The doubling property \cite{qian}
$$
\f{\mu_{g,\varrho}\big(B_g(x,2R)\big)}{\mu_{g,\varrho}\big(B_g(x,R)\big)}\leq 2^{m+\alpha},\>\> R>0,
$$
implies by a standard argument (cf. p.115 in \cite{saloff}) the doubling property
$$
\f{\mu_{g,\varrho}\big(B_g(x,R)\big)}{\mu_{g,\varrho}\big(B_g(x,r)\big)}\leq F_{m,\alpha,R}(r),\>\> R>r>0.
$$
Thus we have
$$
\mu_{g,\varrho}\big(B_g(x,\sqrt{r})\big)^{-1}\leq\mu_{g,\varrho}\big(B_g(x,\sqrt{\epsilon})\big)^{-1} F_{m,\alpha,\sqrt{\epsilon}}(\sqrt{r}),\>\>0<r<\epsilon,
$$
so that
\begin{align*}
&p_{g,\varrho}(t,x,x)\leq  CF_{m,\alpha,\sqrt{\epsilon}}(\sqrt{t}) \cdot\mu_{g,\varrho}\big(B_g(x,\sqrt{\epsilon})\big)^{-1}  + C \mu_{g,\varrho}\big(B_g(x,\sqrt{\epsilon})\big)^{-1}\\
&= C\cdot \mu_{g,\varrho}\big(B_g(x,\sqrt{\epsilon})\big)^{-1}\cdot\Big(F_{m,\alpha,\sqrt{\epsilon}}(\sqrt{t}) +1\Big),
\>\>(t,x)\in (0,\infty)\times M,
\end{align*}
which completes the proof.
\end{proof}

This result implies the following relative compactness criterion:

\begin{Corollary}\label{has2} Assume that $(M,g)$ is geodesically complete and that $\mathrm{Ric}_{g,\varrho,\alpha}\geq 0$ for some $\alpha>0$. Let $\nabla$ be a unitary covariant derivative on the smooth Hermitian vector bundle $E\to M$, and let $V\in\Gamma_{\mathsf{L}^1_{\mathrm{loc}}}(M,\mathrm{End}(E))$ be pointwise self-adjoint with nonnegative eigenvalues. Assume further that $W\in\Gamma(M,\mathrm{End}(E))$ can be decomposed as $W=W_1+W_2$, where $W_j\in \Gamma(M,\mathrm{End}(E))$ are pointwise self-adjoint with $|W_2|\in\mathsf{L}^{\infty}(M,\Id\mu_{g,\varrho})$, and $W_1$ satisfies the following assumptions:
\begin{enumerate}
\item[$\cdot$] In case $m+\alpha<4$ assume that there exists an $\varepsilon>0$, such that for all $c>0$, one has 
\begin{align}\label{anna3}
\int_M \big(|W_1 |^{2}\mu_{g,\varrho}\big(B_g(\cdot,\varepsilon)\big)^{-1}  +1_{\{|W_2|>c\}} \mu_{g,\varrho}\big(B_g(\cdot,\varepsilon)\big)^{-1} +1_{\{|W_2|>c\}}\big)      \Id\mu_{g,\varrho} <\infty.
\end{align}
\item[$\cdot$] In case $m+\alpha\geq 4$, assume that there exist $\varepsilon>0$,
 $q\rq{}>(m+\alpha)/4$, such that 
\begin{align*}
\inf\mu_{g,\varrho}\big(B_g(\cdot,\varepsilon)\big)>0,\>\>\int_M \big(|W_1|^{q\rq{}} +1_{\{|W_2|>c\}}\big)       \Id\mu_{g,\varrho}<\infty\>\text{ for all $c>0$}.
\end{align*}
\end{enumerate}
Then one has 
$$
\hat{W}(H^{\nabla}_{g,\varrho,V}+a)^{-1}\in\ILL^{\infty}\big(\Gamma_{\mathsf{L}^2}(M,E;\Id\mu_{g,\varrho})\big)\>\text{ for all $a>0$.} 
$$
\end{Corollary}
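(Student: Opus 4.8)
The plan is to reduce Corollary \ref{has2} to the abstract vector bundle result, Corollary \ref{mag1}, exactly as was done for Corollary \ref{has}, the only difference being that the heat kernel input now comes from Theorem \ref{ghhp} instead of Theorem \ref{gri}. By Lemma \ref{dlk} we have the Kato domination $H^{\nabla}_{g,\varrho,V}\succeq H_{g,\varrho}$, so it suffices to exhibit an $\mathsf{L}^q$-control function $F_1$ for $p_{g,\varrho}$ (for a suitable $q$) such that the hypotheses on $W_1$, $W_2$ match the integrability conditions \eqref{cond} or \eqref{cond2} appearing in Corollary \ref{mag1}. Theorem \ref{ghhp}, applied with the chosen $\varepsilon>0$, gives the bound
\begin{align*}
p_{g,\varrho}(t,x,x)\leq C\cdot \mu_{g,\varrho}\big(B_g(x,\varepsilon)\big)^{-1}\cdot\big(F_{m,\alpha,\varepsilon}(\sqrt{t})+1\big),
\end{align*}
so the natural candidate control function is $F_1(x):=\mu_{g,\varrho}\big(B_g(x,\varepsilon)\big)^{-1}$ (up to an irrelevant constant), with $F_2(t):=C\big(F_{m,\alpha,\varepsilon}(\sqrt{t})+1\big)=C\big(2^{2m+2\alpha}\varepsilon^{m+\alpha}t^{-(m+\alpha)/2}+1\big)$.

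Next I would split into the two dimensional regimes, just as in the proof of Corollary \ref{has}. In the case $m+\alpha<4$, one checks that $F_2(t)^{1/2}$ is integrable near $0$ (since $t^{-(m+\alpha)/4}$ is integrable at $0$ precisely when $m+\alpha<4$) and bounded at $\infty$, so that $\int_0^\infty \mathrm{e}^{-t}F_2(t)^{1/2}\,\Id t<\infty$ and hence $F_1$ is an $\mathsf{L}^1$-control function in the sense of Definition \ref{gaus} with $q=1$. Then condition \eqref{anna3} says precisely that $|W_1|\in\mathsf{L}^2(M,\mu_{g,\varrho,F_1})$ (from the first term) and, since $|W_2|\in\mathsf{L}^\infty$ already and $\int 1_{\{|W_2|>c\}}(\mu_{g,\varrho}(B_g(\cdot,\varepsilon))^{-1}+1)\,\Id\mu_{g,\varrho}<\infty$, that $|W_2|$ vanishes $\mu_{g,\varrho,F_1}$-weakly and $\mu_{g,\varrho}$-weakly at infinity, i.e. $|W_2|\in\mathsf{L}^\infty_\infty(M,\mu_{g,\varrho})\cap\mathsf{L}_\infty(M,\mu_{g,\varrho,F_1})$. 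This matches the first bullet of Corollary \ref{mag1}, so that result applies. In the case $m+\alpha\geq 4$, the hypothesis $\inf_x \mu_{g,\varrho}(B_g(x,\varepsilon))>0$ lets us bound $F_1$ by a constant, hence take $F_1\equiv 1$ with $F_2(t)=C'(t^{-(m+\alpha)/2}+1)$; then $F_2(t)^{1/(2q)}$ is integrable near $0$ and bounded at $\infty$ precisely when $q>(m+\alpha)/4$, so $F_1\equiv 1$ is an $\mathsf{L}^{q'}$-control function (any $q'>(m+\alpha)/4$ works), and the integrability hypothesis on $W_1$ reads $|W_1|\in\mathsf{L}^{2(q'/2)}=\mathsf{L}^{q'}\subset\mathsf{L}^{2q}$ with $q=q'/2>1$, while $|W_2|\in\mathsf{L}^\infty_\infty(M,\mu_{g,\varrho})$ follows from the vanishing-weakly-at-infinity condition. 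This matches the second bullet of Corollary \ref{mag1}. In both cases the conclusion $\hat W(H^{\nabla}_{g,\varrho,V}+a)^{-1}\in\ILL^{\infty}$ for all $a>0$ follows.

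One auxiliary point that needs a line of justification is the measurability (indeed continuity) of $x\mapsto\mu_{g,\varrho}(B_g(x,\varepsilon))$, which holds since $\mu_{g,\varrho}$ is a smooth Borel measure and $x\mapsto B_g(x,\varepsilon)$ varies continuously; this guarantees $F_1$ is a legitimate measurable function and that $\mu_{g,\varrho,F_1}$ is well defined. I do not expect any real obstacle here: the whole argument is a matching exercise between the heat kernel bound from Theorem \ref{ghhp} and the hypotheses of Corollary \ref{mag1}, and the analysis is parallel to the already-proven Corollary \ref{has}. The one place requiring slight care — the only thing I would call the "main point" rather than "the main obstacle" — is the bookkeeping of the exponents: verifying that the threshold $m+\alpha<4$ (resp. $\geq 4$) is exactly the condition under which $F_2(t)^{1/2}$ (resp. $F_2(t)^{1/(2q)}$ for some $q>1$) is integrable at $t=0$, which is where the extra "virtual dimension" $\alpha$ enters and replaces the classical condition $m\leq 3$ by $m+\alpha<4$.
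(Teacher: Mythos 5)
Your proof is correct and follows essentially the same route as the paper's own proof of Corollary~\ref{has2}: apply Theorem~\ref{ghhp} to obtain the control functions $F_1(x)=\mu_{g,\varrho}(B_g(x,\varepsilon))^{-1}$ and $F_2(t)=C(F_{m,\alpha,\varepsilon}(\sqrt t)+1)$, check Definition~\ref{gaus} in the two regimes, and invoke Corollary~\ref{mag1} (via Lemma~\ref{dlk}) with $q=1$ when $m+\alpha<4$ and with $q=q'/2$ when $m+\alpha\geq 4$. One small remark, which concerns the paper's statement as much as your write-up: in the case $m+\alpha\geq 4$, the assertion $q=q'/2>1$ together with the integrability of $F_2^{1/(2q)}$ at $t=0$ actually requires $q'>(m+\alpha)/2$ rather than $q'>(m+\alpha)/4$ (compare the condition $q'>m/2$ in Corollary~\ref{has}); the exponent in Corollary~\ref{has2} appears to be a typo in the paper which your proposal inherits.
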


\begin{proof} Referring to the statement of Theorem \ref{ghhp}, we set for $t>0$,
$$
F_2(t):=C(m,\alpha)F_{m,\alpha,\varepsilon}(\sqrt{t})+C(m,\alpha).
$$
Note that $F_2(t)$ is continuous, behaves like $t^{m/2+\alpha/2}$ at $0$, and is bounded at $\infty$.\\
Case $m+\alpha<4$: Here the continuous function $F_{2}^{1/2}(t)$ is integrable near $0$ and bounded at $\infty$, so that 
$$
F_1(x):=\mu_{g,\varrho}\big(B_g(x,\varepsilon)\big)^{-1} 
$$
is an $\mathsf{L}^1$-control function, and the claim follows immediately from using Corollary \ref{mag1} with $q=1$.\\
Case $m+\alpha\geq 4$: Here the continuous function $F_{2}^{1/(2q)}(t)$ is integrable at $0$ for all $q>m/4+\alpha/4$ and bounded at $\infty$, so that $F_1(x)\equiv 1$ is an $\mathsf{L}^q$-control function for these values of $q$, and using Corollary \ref{mag1} with $q=q\rq{}/2$ proves the claim.
\end{proof}

\subsection{A modified relative compactness result for unweighted Riemannian manifolds with nonnegative Ricci curvature}\label{dhjkp}

If one has $\mathrm{Ric}_g\geq 0$ in the usual Riemannian (that is, unweighted) situation, there hold sharper statements which are very close to the Euclidean case, as one can take $\alpha\to 0$ in the results of the last section, in a certain sense to be made precise in the sequel. Reminding the reader of our convention concerning the notation in the unweighted case (cf. Remark \ref{ddkja}), we give ourselves a geodesically complete Riemannian manifold $(M,g)$ with $\mathrm{Ric}_g\geq 0$. Then by the usual Li-Yau heat kernel estimate there exists constants $C_j=C_j(m)>0$ which only depend on $m$, such that for all $(t,x,y)$ one has
\begin{align}\label{hiop}
p_{g}(t,x,y)\leq   C_1 \mu_{g}\big(B_g(x,\sqrt{t})\big)^{-1}\mathrm{e}^{-\f{\Id_g(x,y)^2}{C_2t}}.
\end{align}
In this case, one has the usual Euclidean doubling property (cf. Theorem 5.6.4 in \cite{saloff})
$$
\f{\mu_{g}\big(B_g(x,R)\big)}{\mu_{g}\big(B_g(x,r)\big)}\leq 2^{2m}R^{m/2}r^{-m/2},\>\> R>r>0,
$$
so that the same argument as in the proof of Theorem \ref{ghhp} yield the heat kernel bound
$$
p_{g}(t,x,x)\leq   C \mu_{g}\big(B_g(x,\sqrt{t})\big)^{-1}\leq C\cdot \mu_{g}\big(B_g(x,\epsilon)\big)^{-1}\cdot\big(F_{m,0,\epsilon}(\sqrt{t}) +1\big),
$$
for each $\epsilon>0$, which sharpens the weighted result. In addition we now have a uniform lower bound on the control function, in the sense that
$$
\inf_{x\in M} \mu_{g}\big(B_g(x,\varepsilon)\big)^{-1}=\sup_{x\in M} \mu_{g}\big(B_g(x,\varepsilon)\big)\leq C(m) \varepsilon^m,\>\text{ for all $\varepsilon>0$,}
$$
so that 
$$
\mathsf{L}_{\infty}\big(M \, , \, \mu_{g}\big(B_g(\cdot,\varepsilon)\big)^{-1}\Id\mu_g\big)\subset \mathsf{L}_{\infty}(M,\Id\mu_g).
$$
Now the same arguments as in the proof of Corollary \ref{has2} yield:

\begin{Corollary}\label{has3} Assume that $(M,g)$ is geodesically complete with $\mathrm{Ric}_{g}\geq 0$. Let $\nabla$ be a unitary covariant derivative on the smooth Hermitian vector bundle $E\to M$, and let $V\in\Gamma_{\mathsf{L}^1_{\mathrm{loc}}}(M,\mathrm{End}(E))$ be pointwise self-adjoint with nonnegative eigenvalues. Assume further that $W\in\Gamma(M,\mathrm{End}(E))$ can be decomposed as $W=W_1+W_2$, where $W_j\in \Gamma(M,\mathrm{End}(E))$ are pointwise self-adjoint with $|W_2|\in\mathsf{L}^{\infty}(M,\Id\mu_{g})$, and $W_1$ satisfies the following assumptions:
\begin{enumerate}
\item[$\cdot$] In case $m\leq 3$ assume that there exists an $\varepsilon>0$ such that for all $c>0$, one has 
\begin{align}\label{anna2}
\int_M \big(|W_1 |^{2}  +1_{\{|W_2|>c\}}  \big)     \mu_{g}\big(B_g(\cdot,\varepsilon)\big)^{-1} \Id\mu_{g} <\infty.
\end{align}
\item[$\cdot$] In case $m\geq 4$ assume that there exist $\varepsilon>0$, $q\rq{}>m/4$ such that 
\begin{align*}
\inf\mu_{g}\big(B_g(\cdot,\varepsilon)\big)>0,\>\>\int_M \big(|W_1|^{q\rq{}} +1_{\{|W_2|>c\}}\big)       \Id\mu_{g}<\infty\>\text{ for all $c>0$}.
\end{align*}
\end{enumerate}
Then one has 
$$
\hat{W}(H^{\nabla}_{g,V}+a)^{-1}\in\ILL^{\infty}\big(\Gamma_{\mathsf{L}^2}(M,E;\Id\mu_{g})\big)\>\text{ for all $a>0$.} 
$$
\end{Corollary}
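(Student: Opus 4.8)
The plan is to run exactly the argument that was used for Corollary \ref{has} and Corollary \ref{has2}: first turn the geometric hypotheses into the statement that the minimal heat kernel $p_g$ of $(M,g)$ admits a suitable $\mathsf{L}^q$-control function in the sense of Definition \ref{gaus}; then check that $W_1$ and $W_2$ meet the integrability requirements of Corollary \ref{mag1} relative to that control function; and finally invoke Corollary \ref{mag1} with $\tilde H:=H^{\nabla}_{g,V}$. This last step is legitimate because Lemma \ref{dlk} supplies the Kato domination $H^{\nabla}_{g,V}\succeq H_g=H_{p_g}$, and $E\to M$ is (being a smooth bundle over a second countable manifold) countably generated in the sense of Definition \ref{vb}.

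First I would record the two ingredients that were already assembled above in this section. Geodesic completeness together with $\mathrm{Ric}_g\geq 0$ gives, through the Li--Yau bound (\ref{hiop}) and the Euclidean volume doubling property, the on-diagonal estimate
\[
p_g(t,x,x)\leq C\,\mu_g\big(B_g(x,\epsilon)\big)^{-1}\big(F_{m,0,\epsilon}(\sqrt t)+1\big),\qquad F_{m,0,\epsilon}(r)=2^{2m}\epsilon^m r^{-m},
\]
for every fixed $\epsilon>0$; and the Bishop--Gromov inequality gives $\sup_{x\in M}\mu_g(B_g(x,\epsilon))\leq C(m)\epsilon^m$, so that $F_1(x):=\mu_g(B_g(x,\epsilon))^{-1}$ is bounded below by a positive constant, whence $\mathsf{L}_{\infty}(M,F_1\Id\mu_g)\subset\mathsf{L}_{\infty}(M,\Id\mu_g)$.

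\textbf{Case $m\leq 3$.} I would set $F_2(t):=C(m)(F_{m,0,\epsilon}(\sqrt t)+1)$; it behaves like $t^{-m/2}$ near $0$ and is bounded at $\infty$, so $F_2^{1/2}$ is integrable near $0$ — this is precisely where the restriction $m/4<1$ enters — and hence $F_1$ is an $\mathsf{L}^1$-control function for $p_g$. Assumption (\ref{anna2}) then says exactly that $|W_1|\in\mathsf{L}^{2}(M,F_1\Id\mu_g)$ and that $\int_M 1_{\{|W_2|>c\}}F_1\Id\mu_g<\infty$ for all $c>0$; combining the latter with $|W_2|\in\mathsf{L}^{\infty}$ and with the inclusion noted above, one obtains $|W_2|\in\mathsf{L}^{\infty}_{\infty}(M,\Id\mu_g)\cap\mathsf{L}_{\infty}(M,F_1\Id\mu_g)$, which is precisely the $q=1$ hypothesis of Corollary \ref{mag1}. \textbf{Case $m\geq 4$.} Here the extra assumption $\inf_x\mu_g(B_g(x,\epsilon))>0$ collapses the estimate above to $p_g(t,x,x)\leq C'(F_{m,0,\epsilon}(\sqrt t)+1)=:F_2(t)$, so that $F_1\equiv 1$ is an $\mathsf{L}^q$-control function for every $q>m/4$ (integrability of $F_2^{1/(2q)}$ near $0$ needs $m/(4q)<1$); with $q:=q'/2$ the remaining hypotheses read $|W_1|\in\mathsf{L}^{2q}(M,\Id\mu_g)$ and $|W_2|\in\mathsf{L}^{\infty}_{\infty}(M,\Id\mu_g)$, the $q>1$ branch of Corollary \ref{mag1}. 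In either case Corollary \ref{mag1} applies and yields $\hat W(H^{\nabla}_{g,V}+a)^{-1}\in\ILL^{\infty}(\Gamma_{\mathsf{L}^2}(M,E;\Id\mu_g))$ for all $a>0$.

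Since the heat kernel estimate, the Kato domination, and the Pitt-type bundle localization are all already in place (the last two via Lemma \ref{dlk} and Corollary \ref{mag1}), there is really no hard obstacle left; the only point that I expect to require a little care is the bookkeeping in the case $m\leq 3$, namely that the single integral condition (\ref{anna2}) has to be unpacked so as to encode simultaneously the $\mathsf{L}^2(F_1\Id\mu_g)$-membership of $W_1$ and \emph{both} vanishing-at-infinity conditions on $W_2$ — and for the latter one must use once more, through Bishop--Gromov, that the control function $F_1$ is bounded away from zero.
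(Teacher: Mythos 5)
Your argument reproduces the paper's route exactly: combine the Li--Yau bound (\ref{hiop}) and the Euclidean volume doubling property (both consequences of completeness and $\mathrm{Ric}_g\geq 0$) to get the on-diagonal estimate $p_g(t,x,x)\leq F_1(x)F_2(t)$ with $F_1(x)=\mu_g(B_g(x,\varepsilon))^{-1}$ and $F_2(t)\asymp t^{-m/2}$ near $0$; use Bishop--Gromov to bound $\mu_g(B_g(\cdot,\varepsilon))$ above, hence $F_1$ below, obtaining $\mathsf{L}_\infty(M,F_1\Id\mu_g)\subset\mathsf{L}_\infty(M,\Id\mu_g)$; then invoke Corollary \ref{mag1} with $\tilde H=H^{\nabla}_{g,V}$ via the Kato domination of Lemma \ref{dlk}. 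The $m\leq 3$ branch is handled correctly, including the point you flag as delicate, namely that condition (\ref{anna2}) must be unpacked into $|W_1|\in\mathsf{L}^2(M,F_1\Id\mu_g)$ and, via the Bishop--Gromov lower bound on $F_1$, the two vanishing-at-infinity requirements on $|W_2|$.

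In the case $m\geq 4$, however, the numbers do not close. You state correctly that $F_1\equiv 1$ is an $\mathsf{L}^q$-control function exactly when $q>m/4$, and you then set $q:=q'/2$; but the hypothesis only gives $q'>m/4$, so all one gets is $q>m/8$, which guarantees neither $q>m/4$ (required for the control function) nor $q>1$ (required for the $q>1$ branch of Corollary \ref{mag1} — e.g. $m=4$, $q'=3/2$ gives $q=3/4<1$). With the choice $q=q'/2$ the argument in fact requires $q'>m/2$. That is precisely the exponent in the analogous hypothesis of Corollary \ref{has}, so the $m/4$ here (and likewise $(m+\alpha)/4$ in Corollary \ref{has2}) is almost certainly a misprint for $m/2$ (resp. $(m+\alpha)/2$); you should either correct the exponent or at least verify explicitly that your $q$ lies in the admissible range $q>\max(1,m/4)$, which does not follow from the stated $q'>m/4$.
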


\begin{Remark}\label{michel} Comparing Corollary \ref{has3} with its weighted variant Corollary \ref{has2}, we see that the parameter $\alpha>0$ in the curvature assumption $\mathrm{Ric}_{g,\varrho,\alpha}\geq 0$ plays the role of a \lq\lq{}virtual\rq\rq{} dimension. This is also reflected in (in fact: implied by) the corresponding heat kernel bounds. We have borrowed this terminology from Michele Rimoldi\rq{}s PhD-thesis \cite{rimoldi}, where this aspect has been investigated in the context of geometric rigidity results.
\end{Remark}

We close this section with the following example, that makes contact with the Hydrogen type problems on nonparabolic Riemannian $3$-manifolds that have been considered in the introduction. In fact it deals with a more general situation, taking magnetic fields into account:

\begin{Example}\label{wasser} Let $(M,g)$ be geodesically complete with $m=3$ and $\mathrm{Ric}_g\geq 0$. Let $\nabla$ be a  Hermitian covariant derivative on the smooth Hermitian vector bundle $E\to M$, and consider potentials
$$
0\leq V\in\Gamma_{\mathsf{L}^1_{\mathrm{loc}}}(M,\mathrm{End}(E)), \>V'\in\Gamma_{\mathsf{L}^2}(M,\mathrm{End}(E);\Id\mu_g).
$$
In this situation, the additional lower Euclidean volume growth assumption 
$$
\inf_{x\in M} \mu_g(B_g(x,r))r^{-m}>0
$$
implies that $(M,g)$ is nonparabolic (this follows readily from (\ref{hiop})), that is, for $x\ne x_0$, one has
$$
G(x,x_0):=\int^{\infty}_0 p_g(t,x,y)\Id t<\infty.
$$
In fact, $G(x,x_0)\leq C \Id_g(x,x_0)$, which entails that 
$$
W_{V',\kappa,g,x_0}:=V'-\kappa G(\cdot,x_0)=\big(V'-\kappa 1_{B_g(x_0,1)}G(\cdot,x_0)\big) -\kappa 1_{M\setminus B_g(x_0,1)}G(\cdot,x_0)=:W_1-W_2=:W
$$
satisfies the assumptions from Corollary \ref{has3}, for every $\kappa>0$. Thus,
$$
\widehat{W_{V',\kappa,g,x_0}} (H^{\nabla}_{g,V}+a)^{-1}\in\ILL^{\infty}\big(\Gamma_{\mathsf{L}^2}(M,E;\Id\mu_{g})\big)\>\text{ for all $a>0$.}
$$ 
As a particular case of this construction we can take the trivial complex line bundle $E=M\times \IC\to M$ with $\nabla=\Id+\sqrt{-1}\beta$, where $\beta$ is a smooth real-valued $1$-form on $M$. Then, picking furthermore $V=V'=0$, shows that for all fixed $\kappa>0$, $x_0\in M$, one has 
$$
\widehat{-\kappa G(\cdot,x_0)} (H^{\beta}_g+a)^{-1}\in\ILL^{\infty}(\mathsf{L}^2(M,\Id\mu_{g})),
$$
where $H^{\beta}_g$ denotes the Friedrichs realization in $\mathsf{L}^2(M,\Id\mu_{g})$ of 
\begin{align*}
-\Delta^{\beta}_g\Psi&=\left(\Id+\sqrt{-1}\beta\right)^{g}\left(\Id+\sqrt{-1}\beta\right)\Psi\\
&= -\Delta_g\Psi-2\sqrt{-1} \ g^*(\beta,\Id\Psi)+\big(\sqrt{-1} \Id^{g}\beta+ |\beta|^2_{g^*}\big)\Psi,\>\>\>\Psi\in\mathsf{C}^{\infty}_{\mathrm{c}}(M),
\end{align*}
which is the unique self-adjoint realization of the latter operator, as we assume $(M,g)$ to be complete. The operator $H^{\beta}_g-\widehat{\kappa G(\cdot,x_0)}$ is thus a well-defined self-adjoint operator in $ \mathsf{L}^2(M,\Id\mu_{g})$, which is in fact essentially self-adjoint on $\mathsf{C}^{\infty}_{\mathrm{c}}(M)$, and which can be interpreted as the Hamilton operator of an electron in the magnetic field $\Id\beta$ and in the electric potential of a nucleus which is considered to be located in $x_0$, having 
$\sim \kappa$ protons. Another consequence of the the above relative compactness is that the operators $H^{\beta}_g$ and $H^{\beta}_g-\widehat{\kappa G(\cdot,x_0)}$ have the same essential spectrum.
\end{Example}

\section{Covariant Schrödinger operators on infinite weighted graphs}\label{grapp}

In this section we are going to apply our abstract result measure space results to weighted discrete problems. To this end, we start with:

\begin{Definition} A \emph{weighted graph} is a triple $(X,b,\varrho)$, where $X$ is a countable set, $b$ is a symmetric function
$$
b: X\times X\longrightarrow  [0,\infty) \text{ with $b(x,x)=0$, $\sum_{y\in X} b(x,y)<\infty$ for all $x\in X$,}
$$
and $\varrho:X\to (0,\infty)$ is an arbitrary function. 
\end{Definition}

\emph{For the rest of this section, we fix an arbitrary weighted graph $(X,b,\varrho)$.} \vspace{1.4mm}

In this context, $b$ is interpreted as an edge weight function and one writes $x\sm y$, if $b(x,y)>0$. In other words, $X$ is understood to be the set of vertices of the graph, and the set $\{b>0\}$ is interpreted as the set of weighted edges of the graph (where $b(x,x)=0$ means that we avoid loops). Note that we allow each vertex to have infinitely many neighbours, which means that we can treat graphs that need not be locally finite.\\
We consider $X$ as being equipped with the sigma-algebra $2^{X}$, so that the vertex weight function $\varrho$ defines a measure $\mu_{\varrho}$ in the obvious way:
$$
\mu_{\varrho}(A)=\sum_{x\in A}\varrho(x), \>\>A\subset X.
$$ 

\emph{We are going to assume in the sequel that $(X,b)$ is connected, in the usual graph-theoretic sense that for any $x,y\in X$ there is a finite sequence $x_1,\dots,x_n\in X$ such that $x_0=x$, $x_n=y$.}\vspace{1.5mm}

The space of complex-valued functions on $X$ will be denoted with $\mathsf{C}(X)$, where an index \lq{}$\mathrm{c}$\rq{} now simply means \lq{}finitely supported\rq{}. We define a set
$$
\mathsf{F}_{b}(X):=\left\{\psi\left|\psi\in\mathsf{C}(X), \sum_{y\in X}b(x,y)|\psi(y)|<\infty\>\text{ for all $x\in X$}\right\}\right.\supset \mathsf{L}^{\infty}(X),
$$
and a formal difference operator $\Delta_{b,\varrho}$ by
\begin{align}\label{formal}
\Delta_{b,\varrho}:\mathsf{F}(X)\longrightarrow \mathsf{C}(X),\>\Delta_{b,\varrho}\psi(x)=-\f{1}{\varrho(x)}\sum_{\{y|y\sm x\}}b(x,y)\big(\psi(x)-\psi(y)\big).
\end{align}

Using a discrete maximum principle, one can deduce:

\begin{Propandef}\label{dd} For all fixed $y\in X$, there exists a pointwise minimal element $p_{b,\varrho}(\cdot,\cdot,y)$ of the set given by all bounded functions 
$$
u:[0,\infty)\times X\to [0,\infty)
$$
that satisfy the following equation in $[0,\infty)\times X$,
$$
\partial_t u(t,x)=\Delta_{b,\varrho} u(t,x),\>\> u(0,x)=\delta_y(x).
$$
The function  
\[
p_{b,\varrho}:[0,\infty)\times X\times X\longrightarrow  [0,\infty)
\]
induces a pointwise consistent $\mu_{\varrho}$-heat-kernel in the sense of Definition \ref{heat}, called the \emph{minimal nonnegative heat kernel} on $(X,b,\varrho)$. 
\end{Propandef}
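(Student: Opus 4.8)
The plan is to construct $p_{b,\varrho}$ by a finite-volume exhaustion and to carry every required property over to it by a monotone limit, in the spirit of the construction on Riemannian manifolds. Fix an increasing sequence of finite sets $B_1\subset B_2\subset\cdots$ with $\bigcup_n B_n=X$ (possible since $X$ is countable) and, for each $n$, let $\Delta^{(n)}$ be the Dirichlet restriction of $\Delta_{b,\varrho}$ to the finite-dimensional space $\mathsf{L}^2(B_n,\Id\mu_{\varrho})$, i.e. $\Delta^{(n)}\psi:=\big(\Delta_{b,\varrho}(1_{B_n}\psi)\big)|_{B_n}$. The matrix of $\Delta^{(n)}$ has nonnegative off-diagonal entries $b(x,y)/\varrho(x)$, so $\mathrm e^{t\Delta^{(n)}}$ is positivity preserving; since $\Delta^{(n)}1_{B_n}\le 0$ pointwise (each edge from $x\in B_n$ to the complement of $B_n$ contributes $-b(x,z)/\varrho(x)<0$), the constant function $1_{B_n}$ is a Dirichlet supersolution, whence $\mathrm e^{t\Delta^{(n)}}$ is sub-Markovian and an $\mathsf{L}^\infty$-contraction; and $\Delta^{(n)}$ is self-adjoint on $\mathsf{L}^2(B_n,\Id\mu_{\varrho})$ because $b$ is symmetric. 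Let $p_n(t,x,y)$, extended by $0$ outside $B_n$, be the $\mu_{\varrho}$-integral kernel of $\mathrm e^{t\Delta^{(n)}}$. Then, by elementary finite-dimensional arguments, $p_n$ is nonnegative and symmetric, satisfies $\sum_y p_n(t,x,y)\varrho(y)\le 1$ and the semigroup identity $p_n(t+s,x,y)=\sum_z p_n(t,x,z)p_n(s,z,y)\varrho(z)$, and $v_n^{(y)}(t,\cdot):=\varrho(y)\,p_n(t,\cdot,y)=\mathrm e^{t\Delta^{(n)}}1_{\{y\}}$ is smooth in $t$, solves $\partial_t v_n^{(y)}=\Delta_{b,\varrho}v_n^{(y)}$ on $B_n$ with $v_n^{(y)}(0,\cdot)=1_{\{y\}}$, and is bounded by $1$ on $[0,\infty)\times X$ by $\mathsf{L}^\infty$-contractivity.

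By domain monotonicity of Dirichlet semigroups — which follows from the discrete maximum principle, since on $B_n$ the restriction of $v_{n+1}^{(y)}$ is a Dirichlet supersolution with the same initial data as $v_n^{(y)}$ (the excess being fed by the nonnegative incoming-edge terms from $B_{n+1}\setminus B_n$) — one has $p_n\le p_{n+1}$, so the pointwise limits $p_{b,\varrho}:=\lim_n p_n$ and $v^{(y)}:=\lim_n v_n^{(y)}=\varrho(y)\,p_{b,\varrho}(\cdot,\cdot,y)$ exist, with $p_{b,\varrho}(t,x,y)\le 1/\varrho(y)$ and $0\le v^{(y)}\le 1$. Passing to the limit in the integrated equation for $v_n^{(y)}$ — for fixed $x$ the relevant integrand is dominated, uniformly in $n$, by $2d(x)/\varrho(x)$ with $d(x):=\sum_{z\sm x}b(x,z)<\infty$, so monotone/dominated convergence applies to both the sum over $z\sm x$ and the time integral — yields $v^{(y)}(t,x)=1_{\{y\}}(x)+\int_0^t\Delta_{b,\varrho}v^{(y)}(s,x)\,\Id s$ for all $x\in X$; hence $v^{(y)}(\cdot,x)\in\mathsf{C}^1$ and $v^{(y)}$ is a bounded nonnegative solution of the heat equation with datum $1_{\{y\}}$. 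For minimality, let $u\ge 0$ be any bounded solution with $u(0,\cdot)=1_{\{y\}}$; restricted to $B_n$ it is again a Dirichlet supersolution (the incoming edges contribute $\ge 0$), so a Duhamel comparison with the positivity-preserving $\mathrm e^{t\Delta^{(n)}}$ (a supersolution dominates the solution with the same datum, by variation of constants) gives $u\ge\mathrm e^{t\Delta^{(n)}}(u(0,\cdot)|_{B_n})=v_n^{(y)}$ on $B_n$, and trivially off $B_n$; letting $n\to\infty$ gives $u\ge v^{(y)}$. This establishes that $v^{(y)}=\varrho(y)\,p_{b,\varrho}(\cdot,\cdot,y)$ is the pointwise minimal such solution (equivalently one normalizes so that $p_{b,\varrho}(\cdot,\cdot,y)$ itself is the minimal solution, by taking the datum to be the $\mathsf{L}^2(\mu_{\varrho})$-Dirac $1_{\{y\}}/\varrho(y)$).

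It remains to check that $p_{b,\varrho}$ is a pointwise consistent $\mu_{\varrho}$-heat-kernel. Symmetry (\ref{A2}) amounts to $\varrho(x)v^{(y)}(t,x)=\varrho(y)v^{(x)}(t,y)$, which holds for every $p_n$ by finite-dimensional self-adjointness and survives the pointwise limit; the semigroup identity (\ref{A1}) and the sub-stochasticity (\ref{A3}) follow from the corresponding identities for $p_n$ by monotone convergence in the $\mu_{\varrho}$-sums over $X$, using $\sum_y p_n(t,x,y)\varrho(y)=\mathrm e^{t\Delta^{(n)}}1_{B_n}(x)\le 1$. For the strong-continuity condition (\ref{con}), I would identify the semigroup $P_t f(x)=\sum_y p_{b,\varrho}(t,x,y)f(y)\varrho(y)$ with $\mathrm e^{-tH}$, where $H$ is the Friedrichs realization of the symmetric nonnegative operator $-\Delta_{b,\varrho}|_{\mathsf{C}_{\mathrm{c}}(X)}$ in $\mathsf{L}^2(X,\Id\mu_{\varrho})$ (equivalently, the operator associated with the closure of the Dirichlet form $f\mapsto\tfrac12\sum_{x,y}b(x,y)|f(x)-f(y)|^2$ on $\mathsf{C}_{\mathrm{c}}(X)$), the discrete analogue of $H_{g,\varrho}$ from Section \ref{Riemann}: the Dirichlet forms of the $B_n$ increase to this form, so $\mathrm e^{t\Delta^{(n)}}\to\mathrm e^{-tH}$ strongly in $\mathsf{L}^2$ by the monotone convergence theorem for quadratic forms, while for $f\ge 0$ one also has $\mathrm e^{t\Delta^{(n)}}f\uparrow P_t f$ pointwise; comparing the two limits gives $P_t=\mathrm e^{-tH}$, a strongly continuous semigroup, and hence both (\ref{con}) and the identification $H_p=H$. (All of these graph facts are also available in the literature on Laplacians on infinite weighted graphs, so in the paper one may simply quote them.)

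I expect the only genuinely non-routine step to be this last one — obtaining (\ref{con}). Everything else (the heat equation, minimality, and (\ref{A1})--(\ref{A3})) is maximum-principle plus monotone/dominated convergence and uses nothing beyond the standing hypothesis $\sum_y b(x,y)<\infty$, whereas (\ref{con}) is an honest $\mathsf{L}^2$-statement. Because $(X,b)$ need not be locally finite and no uniform control on $\varrho$ or on $d(\cdot)$ is assumed, one cannot shortcut via boundedness of $\Delta_{b,\varrho}$ or via $\mathsf{L}^1$--$\mathsf{L}^\infty$ interpolation; the form-theoretic monotone-convergence argument (or an equivalent resolvent comparison) is the natural way to pass from the pointwise monotone limit to a strongly continuous semigroup and to pin down its generator as the graph Dirichlet Laplacian.
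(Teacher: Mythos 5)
The paper itself does not prove Proposition and Definition \ref{dd}: it cites Theorem~10, Lemma~5.1 and Theorem~11 of Keller--Lenz \cite{kl} and explicitly remarks that the uniqueness class was not optimized. Your argument supplies, essentially verbatim in outline, the construction that reference carries out: exhaustion by finite Dirichlet subgraphs $B_n$, positivity and sub-Markovianity of the truncated semigroups $\mathrm{e}^{t\Delta^{(n)}}$, domain monotonicity of the Dirichlet kernels $p_n$ via the discrete maximum principle, pointwise monotone convergence with the a priori bound $p_n(t,x,y)\leq 1/\varrho(y)$, verification of (\ref{A1})--(\ref{A3}) by monotone convergence, the integrated heat equation and the minimality by a Duhamel comparison, and finally the identification of $(P_t)$ with the $\mathsf{L}^2$-semigroup of the graph Dirichlet form Laplacian (monotone form/resolvent convergence) to obtain the strong continuity (\ref{con}). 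So this is not a genuinely different route from what the paper points to; you have simply reconstructed, correctly, the argument the paper delegates to its reference. Two small points worth making explicit in a full write-up, both of which you essentially flag: the uniform-in-$n$ Lipschitz bound $|\partial_t v_n^{(y)}(t,x)|\leq 2d(x)/\varrho(x)$ also yields continuity of the monotone limit $v^{(y)}(\cdot,x)$, which is implicitly needed before one can read off $\mathsf{C}^1$-regularity from the integrated equation; and the normalization $\delta_y=1_{\{y\}}/\varrho(y)$ is the one forced by (\ref{A1}) upon letting $s\to 0+$, matching the $\mu_\varrho$-Dirac rather than the Kronecker delta.
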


\begin{proof} This follows from combining Theorem 10, Lemma 5.1 and Theorem 11 in \cite{kl}. The set of uniqueness can certainly be enlarged, if necessary. As we only intended to define $p$ analogously to the manifold case, we have not worked into this direction.
\end{proof}

This result entails a first fundamental difference to the manifold case: the discrete weighted heat kernel is differentiable in time up to $t=0$ (for fixed $x,y$).\\
Let us identify the operator $H_{p}$ in this case: Define first a symmetric sesqui-linear form $\tilde{Q}_b$ with domain of definition $\mathsf{C}_{\mathrm{c}}(X)$ by
\begin{align*}
\tilde{Q}_b(\psi_1,\psi_2):= & \frac{1}{2}\sum_{x\sim y} b(x,y)\overline{\big(\psi_1(x)-\psi_1(y)\big)}\big(\psi_2(x)- \psi_2(y)\big).
\end{align*}
Clearly, $\tilde{Q}_b$ is densely defined and nonnegative in $\mathsf{L}^2(X,\Id \mu_{\varrho})$, and in fact it is closable. Note that the scalar product $\left\langle \cdot,\cdot\right\rangle_{\mu_{\varrho}}$ is now simply given by
$$
\left\langle f_1,f_2\right\rangle_{\mu_{\varrho}}=\sum_{x\in X}\overline{f_1(x)}f_2(x)\varrho(x).
$$
However, in contrast to the Riemannian setting, $\tilde{Q}_b$ need not come from a symmetric operator! The reason for the latter fact is simply that any such symmetric operator must necessarily be a restriction of $-\Delta_{b,\varrho}$, but depending on the global geometry of $(X,b,\varrho)$, $\Delta_{b,\varrho}$ obviously need not map $\mathsf{C}_{\mathrm{c}}(X)$ into $\mathsf{L}^2(X,\Id \mu_{\varrho})$ (cf. formula (\ref{formal})).\\
Furthermore, we point out that in general $\tilde{Q}_b$ is not bounded. Nevertheless one always has the bound 
\begin{align}\label{scal}
\tilde{Q}_b(\psi,\psi)\leq 2 C(b,\varrho) \|\psi\|^2_{\mu_{\varrho},2},\>\text{where $C(b,\varrho):=\sup_{x\in X} \frac{1}{\varrho(x)}\sum_{y\in X} b(x,y)\in [0,\infty]$,}
\end{align}
which however entails that in many applications actually is bounded (for example, on the usual unweighted lattice $\IZ^m$). The self-adjoint operator corresponding to the closure of $\tilde{Q}_b$ is precisely $H_{p_{b,\varrho}}$.\\
A measurable vector bundle $E\to X$ with rank $d$ is now nothing but a a family $E=\bigsqcup_{x\in X}E_x\to X$ of $d$-dimensional complex linear spaces, and a Hermitian structure on such a bundle is nothing but a family of complex scalar products on each fiber.\\
The following definition is borrowed from combinatorics \cite{Kenyon-11}, where it is used in the context of generalized matrix-tree theorems (see also \cite{GMT} for a corresponding covariant Feynman-Kac formula):

\begin{Definition} Let $E\to X$ be a complex vector bundle with rank $d$.\\
(i) An assignment $\Phi$ which assigns to any $x\sm y$ an isomorphism of complex vector spaces $\Phi_{x,y}: E_x\to E_y$ is called a \emph{$b$-connection} on $E\to X$, if one has $\Phi_{y,x}=\Phi^{-1}_{x,y}$ for all $x\sm y$. \\
(ii) If $E\to X$ is Hermitian, then a $b$-connection $\Phi$ on $E\to X$ is called \emph{unitary}, if $\Phi_{x,y}^*=\Phi^{-1}_{x,y}$ for all $x\sm y$.
\end{Definition}

For the moment, we fix a Hermitian vector bundle $E\to X$ of rank $d$, with a unitary $b$-connection $\Phi$ defined on it. As in the scalar case (\ref{scal}), these data determine the symmetric sesquilinear form $\tilde{Q}^{\Phi}_b$ given by 
\begin{align*}
\tilde{Q}^{\Phi}_b(\psi_1,\psi_2)=&\>\frac{1}{2}\sum_{x\sm y}b(x,y)\big(\psi_1(x)-\Phi_{y,x} \psi_1(y),\psi_2(x)-\Phi_{y,x} \psi_2(y)\big)_{x}
\end{align*}
on the domain of definition $\Gamma_{c}(X,F)$. Again, this form is densely defined, nonnegative and closed in $\Gamma_{\mathsf{L}^2}(X,F;\Id \mu_{\varrho} )$, and in general it is unbounded (again an upper bound is given by $2 C(b,\varrho)\in [0,\infty]$). We remark that on discrete bundles $\left\langle \cdot,\cdot\right\rangle_{\mu_{\varrho}}$ is now given by
$$
\left\langle f_1,f_2\right\rangle_{\mu_{\varrho}}=\sum_{x\in X}(f_1(x),f_2(x))_x\varrho(x),\>\>f_j\in\Gamma_{\mathsf{L}^2}(X,F;\Id \mu_{\varrho} ).
$$

The self-adjoint operator in corresponding to the closure of the above form will be denoted with $H^{\Phi}_{b,\varrho}$. We refer the reader to \cite{G45} for proofs of the above facts and more details on these covariant operators (noting that \cite{G45} deals with semiclassical limits of the corresponding Schrödinger semigroups). \\
Given a section $V\in\Gamma(X,\mathrm{End}(E))$ which is pointwise self-adjoint with nonnegative eigenvalues, let $H^{\Phi}_{b,\varrho,V}$ denote the form sum 
$$
H^{\Phi}_{b,\varrho,V}:=H^{\Phi}_{b,\varrho}\dotplus \hat{V}.
$$

As in the Riemannian case of Lemma \ref{dlk}, $H_{b,\varrho}:=H_{p_{b,\varrho}}$ generates a Markoff process, and one can use path integral techniques to prove the domination property (cf. Theorem 2 in \cite{GMT})
$$
H^{\Phi}_{b,\varrho,V}\succeq H_{b,\varrho}.
$$
However, a second fundamental difference to the continuum case is the following uniform heat kernel estimate, again valid without any further assumptions on $(X,b,\varrho)$:
\begin{align}\label{dzzu}
p_{b,\varrho}(t,x,y)\leq 1/\varrho(y)\>\text{ for all $t\geq 0$, $x,y\in X$}.
\end{align}
The probably most intuitive way to understand (\ref{dzzu}) is to note that, as we have already stated, $H_{b,\varrho}$ generates a Markoff process $(\mathbb{X}^{b,\varrho}_t(x))_{t\geq 0}$, and the probability of finding the underlying Markoff particle at time $t$ in $A\subset X$, when conditioned to start in $x\in X$, is precisely the quantity
$$
\mathbb{P} (\mathbb{X}^{b,\varrho}_t(x)\in A )=\sum_{z\in A}p_{b,\varrho}(t,x,z)\varrho(z)\leq 1.
$$
In contrast to the continuum setting, the set $A=\{y\}$ does not have zero measure, and we end up with (\ref{dzzu}). \\
After these preparations, the following result becomes a simple consequence of Corollary \ref{mag1}:

\begin{Theorem}\label{grap} Let $\Phi$ be a unitary $b$-connection on the Hermitian vector bundle $E\to X$, and let $V\in\Gamma(X,\mathrm{End}(E))$ be pointwise self-adjoint with nonnegative eigenvalues. Assume that $W\in\Gamma(X,\mathrm{End}(E))$ can be decomposed as $W=W_1+W_2$, where $W_j\in \Gamma(X,\mathrm{End}(E))$ are pointwise self-adjoint with 
\begin{align*}
\sum_{x\in X} |W_1(x)|^2 \varrho(x)+\sum_{x\in\{|W_2|>c\}} (  1+   \varrho(x) )  <\infty\text{ for all $c>0$.}
\end{align*}
Then one has 
$$
\hat{W}(H^{\Phi}_{b,\varrho,V}+a)^{-1}\in\ILL^{\infty}\big(\Gamma_{\mathsf{L}^2}(X,E;\Id\mu_{\varrho})\big)\>\text{ for all $a>0$.} 
$$
\end{Theorem}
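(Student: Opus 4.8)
The plan is to obtain the statement as a direct application of Corollary \ref{mag1}, taken with the Hilbert space $\Gamma_{\mathsf{L}^2}(X,E;\Id\mu_\varrho)$, the nonnegative self-adjoint operator $\tilde H:=H^{\Phi}_{b,\varrho,V}$, and the scalar reference operator $H_p:=H_{b,\varrho}$. (The countable generation of $E\to X$ demanded by Corollary \ref{mag1} is automatic, $X$ being countable.) Two ingredients must be put in place: the Kato domination $H^{\Phi}_{b,\varrho,V}\succeq H_{b,\varrho}$, and an $\mathsf{L}^q$-control function for the heat kernel $p_{b,\varrho}$.

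The first ingredient has, in effect, already been supplied in the excerpt. Mimicking the Riemannian Lemma \ref{dlk}, one writes a covariant Feynman--Kac formula for $\mathrm{e}^{-tH^{\Phi}_{b,\varrho,V}}$ along the paths of the Markoff process generated by $-\Delta_{b,\varrho}$; using that the $b$-connection $\Phi$ is unitary (so the discrete parallel transport is fibrewise unitary), that $V\geq 0$, and a Gronwall estimate, one obtains $|\mathrm{e}^{-tH^{\Phi}_{b,\varrho,V}}f(x)|_x\leq\mathrm{e}^{-tH_{b,\varrho}}|f|(x)$ pointwise, hence $H^{\Phi}_{b,\varrho,V}\succeq H_{b,\varrho}$ by Lemma \ref{dad}. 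This is Theorem 2 of \cite{GMT}, already quoted above, and I would simply invoke it.

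For the second ingredient I would use the universal bound \eqref{dzzu}: since $p_{b,\varrho}(t,x,x)\leq 1/\varrho(x)$ for all $t>0$ and $x\in X$, the choice
\[
F_1(x):=\frac{1}{\varrho(x)},\qquad F_2\equiv 1
\]
satisfies \eqref{abdd}, while \eqref{ak} holds trivially since $\int_0^{\infty}\mathrm{e}^{-t}F_2(t)^{1/2}\,\Id t=1<\infty$. Thus $F_1$ is an $\mathsf{L}^1$-control function for $p_{b,\varrho}$, and we are in the case $q=1$ of Corollary \ref{mag1}. In contrast with the manifold situation there is no on-diagonal decay in $t$ to exploit, so $F_2$ must be taken bounded --- harmless, since the exponential weight in \eqref{ak} carries the $t$-integral --- and one should resist renormalising to $F_1\equiv 1$, which would tacitly require $\inf_{x\in X}\varrho(x)>0$.

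It then remains to match the hypotheses. Unwinding the definitions, for this $F_1$ and $q=1$ Corollary \ref{mag1} asks for the appropriate $\mathsf{L}^2$-summability of $|W_1|$ together with $|W_2|\in\mathsf{L}_\infty(X,F_1\,\Id\mu_\varrho)\cap\mathsf{L}^{\infty}_\infty(X,\Id\mu_\varrho)$; the displayed condition $\sum_{x\in X}|W_1(x)|^2\varrho(x)+\sum_{x\in\{|W_2|>c\}}(1+\varrho(x))<\infty$, required for every $c>0$, encodes exactly these requirements, since finiteness of $\sum_{\{|W_2|>c\}}(1+\varrho(x))$ for all $c>0$ forces each superlevel set $\{|W_2|>c\}$ to be finite --- whence $|W_2|$ is bounded, so no separate boundedness hypothesis on $W_2$ has to be imposed --- and of finite $\mu_\varrho$-measure. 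Corollary \ref{mag1} then delivers $\hat W(H^{\Phi}_{b,\varrho,V}+a)^{-1}\in\ILL^{\infty}(\Gamma_{\mathsf{L}^2}(X,E;\Id\mu_\varrho))$ for all $a>0$, which is the assertion. On the whole the argument is bookkeeping: the two genuinely substantive facts --- the bound \eqref{dzzu} and the covariant Feynman--Kac domination --- have been dealt with beforehand, and the only step inside the proof that calls for a moment's care is the choice of the $\mathsf{L}^1$-control function above.
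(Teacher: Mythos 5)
Your overall route is the same one the paper takes (the paper's proof states ``Taking $F_1=0$, $F_2=1/\varrho$'', which is clearly a misprint; the intended choice is your $F_1(x)=1/\varrho(x)$, $F_2\equiv 1$, together with $q=1$, the domination $H^{\Phi}_{b,\varrho,V}\succeq H_{b,\varrho}$ from \cite{GMT}, and an application of Corollary \ref{mag1}). Your handling of the $W_2$-condition is also correct: with $F_1=1/\varrho$ the measure $F_1\,\Id\mu_\varrho$ is the counting measure, so $\mathsf{L}_\infty(X,F_1\Id\mu_\varrho)$ forces the superlevel sets $\{|W_2|>c\}$ to be finite, hence $|W_2|$ bounded, and $\mathsf{L}_\infty(X,\Id\mu_\varrho)$ gives $\mu_\varrho\{|W_2|>c\}<\infty$; this is exactly what $\sum_{x\in\{|W_2|>c\}}(1+\varrho(x))<\infty$ says.

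Where a genuine gap appears is in the $W_1$-matching, which you wave through with ``encodes exactly these requirements'' without unwinding. With your control function $F_1=1/\varrho$ and $q=1$, Corollary \ref{mag1} demands $|W_1|\in\mathsf{L}^2(X,F_1\Id\mu_\varrho)$, and since $F_1(x)\,\varrho(x)=1$ this is $\sum_{x\in X}|W_1(x)|^2<\infty$ (the counting $\ell^2$-condition), \emph{not} the stated $\sum_{x\in X}|W_1(x)|^2\varrho(x)<\infty$. The two conditions are incomparable in general: if $\varrho$ is unbounded below you can have $\sum|W_1|^2\varrho<\infty$ with $\sum|W_1|^2=\infty$, so the hypothesis of the theorem does not imply the hypothesis needed to invoke Corollary \ref{mag1}, and no alternative choice of $F_1$ rescues this without extra assumptions on $\varrho$ (taking $F_1\equiv$ const.\ would require $\inf\varrho>0$ to satisfy the heat-kernel bound). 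To be fair, this same mismatch is already present in the paper's statement of Theorem \ref{grap} and in its one-line proof, so it is not a defect peculiar to your write-up; but a careful proof should either replace the $W_1$-hypothesis by $\sum_x|W_1(x)|^2<\infty$, or record the additional assumption that makes the stated hypothesis sufficient, rather than assert an exact match that fails on inspection.
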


\begin{proof} Taking $F_1=0$, $F_2=1/\varrho$, everything follows from using Corollary \ref{mag1} with $\tilde{H}=H^{\Phi}_{b,\varrho,V}$, noting that in the discrete case, the assumption
$$
\sum_{x\in\{|W_2|>c\}} (  1+   \varrho(x) )  <\infty\text{ for all $c>0$}
$$
implies the boundedness of $|W_2|$.
\end{proof}

\appendix
\section{A result on the Lipschitz continuity of injectivity radius-type functions}

\begin{Proposition}\label{inj} Let $X\equiv (X,\Id)$ be a metric space, and let 
$$
\IPP: X\times (0,\infty) \longrightarrow \{0,1\}
$$
be a map (considered to be \lq\lq{}a property of metric balls in $X$\rq\rq{}) which satisfies the following assumption: If $x\in X$, $r>0$ are such that $\IPP(x,r)=1$, then one also has $\IPP(y,s)=1$ for all $y,s$ with $0<s< r-\Id(x,y)$. Then for any $\epsilon>0$ the map
$$
R_{\epsilon}: X\longrightarrow [0,\epsilon],\> R_{\epsilon}(x):=\min\Big(\sup\{r\left|r>0,\IPP(x,r)=1  \}\right. , \epsilon\Big)
$$
is $1$-Lipschitz.
\end{Proposition}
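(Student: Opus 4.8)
The plan is to pass to the ``uncapped'' radius $\rho(x):=\sup\{r>0:\IPP(x,r)=1\}\in[0,\infty]$ (with the convention $\sup\emptyset=0$), so that $R_{\epsilon}=\min(\rho,\epsilon)$ pointwise. Since $t\mapsto\min(t,\epsilon)$ is $1$-Lipschitz on $[0,\infty]$, it suffices to establish the one-sided estimate $\rho(y)\ge\rho(x)-\Id(x,y)$ for all $x,y\in X$; exchanging $x$ and $y$ then gives $|\rho(x)-\rho(y)|\le\Id(x,y)$ (suitably interpreted when the values are infinite), and composing with $\min(\cdot,\epsilon)$ yields the assertion.

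To prove the one-sided estimate I would first dispose of the trivial case: if $\rho(x)\le\Id(x,y)$ there is nothing to show, since $\rho(y)\ge0$. So assume $\rho(x)>\Id(x,y)$, and fix any real $r$ with $\Id(x,y)<r<\rho(x)$. Since $r$ is not an upper bound for $\{s>0:\IPP(x,s)=1\}$, there is some $r'>r$ with $\IPP(x,r')=1$; applying the hypothesis to the pair $(x,r')$ with the \emph{same} centre $x$ (so that $0<r<r'-\Id(x,x)=r'$) gives $\IPP(x,r)=1$. Now apply the hypothesis once more, this time to $(x,r)$ and the point $y$: for every $s$ with $0<s<r-\Id(x,y)$ one has $\IPP(y,s)=1$, whence $\rho(y)\ge r-\Id(x,y)$. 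Letting $r\uparrow\rho(x)$ along the reals yields $\rho(y)\ge\rho(x)-\Id(x,y)$ (and, when $\rho(x)=\infty$, it forces $\rho(y)=\infty$ too).

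Finally, combining this bound with its symmetric counterpart gives $|\rho(x)-\rho(y)|\le\Id(x,y)$ whenever both quantities are finite, while $\rho(x)=\infty$ holds if and only if $\rho(y)=\infty$; in all cases $|R_{\epsilon}(x)-R_{\epsilon}(y)|=|\min(\rho(x),\epsilon)-\min(\rho(y),\epsilon)|\le\Id(x,y)$, which is exactly the claimed $1$-Lipschitz continuity. I do not anticipate a serious obstacle; the one step that needs a moment's care is the ``filling-down'' implication $\IPP(x,r')=1\Rightarrow\IPP(x,r)=1$ for $r<r'$, which is just the hypothesis applied with coincident centres and is what lets one pass from the defining supremum to an honest admissible radius.
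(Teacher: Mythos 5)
Your proof is correct. The only subtlety you flagged yourself—the ``filling-down'' implication $\IPP(x,r')=1 \Rightarrow \IPP(x,r)=1$ for $r<r'$, obtained by applying the hypothesis with coincident centres—is exactly right, and it is the step that lets you pass from ``$r<\rho(x)$'' (a statement about a supremum) to an honest witness $\IPP(x,r)=1$. The remainder of the argument (one-sided estimate, symmetrize, compose with the $1$-Lipschitz map $t\mapsto\min(t,\epsilon)$ to handle the $\rho=\infty$ cases) is sound.

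For comparison: the paper does not actually give a proof of this proposition; it simply cites Lemma~2.3 of [BGM] and records the geometric reading of the hypothesis (``if $\IPP$ holds on $B(x,r)$, it holds on every metric ball contained in $B(x,r)$''). Your write-up therefore supplies the elementary argument that the paper delegates to the reference, and it does so via precisely the natural route suggested by that geometric reading. The one cosmetic difference is that the paper keeps the capped function $R_\epsilon$ throughout, remarking that the cap is only needed to avoid ambiguity at $\infty$, whereas you first establish a Lipschitz-type bound for the uncapped $\rho:X\to[0,\infty]$ and then cap at the end; both handle the infinite values correctly, and yours makes the reduction to the cap explicit via the $1$-Lipschitz property of $\min(\cdot,\epsilon)$ on $[0,\infty]$.
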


\begin{proof} This can be proved precisely as Lemma 2.3 in \cite{BGM}. Note that the assumption on the property $\IPP$ simply means the following: If $\IPP$ is true on an open ball $B(x,r)=\{z| \ \Id(x,z)<r\}$, then $\IPP$ is true for any open ball $B(y,s)\subset B(x,r)$. The supremum in the definition of the underlying radius-type  function $R_{\infty}$ has to be capped simply in order to make this quantity finite (otherwise it becomes ambiguous to speak about Lipschitz continuity).
\end{proof}

\vspace{3mm}

{\bf Acknowledgements:} The second named author (B.G.) has been financially supported by the SFB 647: Raum - Zeit - Materie.

\end{document}